\documentclass[11pt, letterpaper]{article}
\usepackage{amsmath, amsthm, amssymb, amsfonts} 
\usepackage{mathrsfs} 
\usepackage{bm} 
\usepackage{graphicx} 
\usepackage{enumerate} 
\usepackage{geometry} 
\usepackage{setspace} 
\usepackage{lmodern} 
\usepackage{hyperref} 
\usepackage{color} 
\usepackage{xcolor} 
\usepackage{url} 
\usepackage{mathtools}
\usepackage{enumitem}
\usepackage{changepage}
\usepackage{microtype}
\usepackage{authblk}
\usepackage{amsfonts}
\usepackage{mathrsfs,amscd,amssymb,amsthm,amsmath,bm,graphicx,psfrag,subfigure,url,mathtools}
\usepackage{pict2e}
\usepackage{psfrag,amsmath}
\usepackage{tikz}
\usepackage{indentfirst}
\usepackage{hyperref}
\usepackage{bookmark}
\usepackage{enumerate}
\usepackage{latexsym,euscript,epic,eepic,color}
\usepackage{multirow}
\usepackage{multicol}
\usepackage{longtable}
\usepackage{adjustbox}

\usepackage{setspace}
\usepackage{epstopdf}
\allowdisplaybreaks
\usepackage{authblk}
\usepackage{pifont}
\numberwithin{equation}{section}

\usepackage{enumitem}
\usepackage{microtype}

\theoremstyle{plain}
\newtheorem{theorem}{Theorem}[section]
\newtheorem{lemma}[theorem]{Lemma}
\newtheorem{proposition}[theorem]{Proposition}

\newtheorem{problem}{Problem}
\newtheorem{corollary}[theorem]{Corollary}
\theoremstyle{definition}

\theoremstyle{remark}
\newtheorem{remark}[theorem]{Remark}

\hypersetup{
    colorlinks=true,
    linkcolor=blue,
    citecolor=red,
    urlcolor=magenta,
}

\newcommand{\keywords}[1]{%
  \par\vspace{6pt}\noindent\textbf{Keywords: }#1\par
}

\newcommand{\MSC}[2][2020]{%
  \par\vspace{3pt}\noindent\textbf{MSC(#1): }#2\par
}

\title{\bf The edge spectral extremal problem for $kK_3$
in nonzero residue classes}
\vspace{6mm}
\author[1]{Jing Gao}
\author[1]{Shuchao Li}
\author[2]{Yuantian Yu\thanks{Corresponding author. \\
\hspace*{2em}E-mail address: gjing1270@163.com (J. Gao), lscmath@ccnu.edu.cn (S. Li), ytyumath@sina.com (Y. Yu)}}

\affil[1]{School of Mathematics and Statistics, and Hubei Key Lab--Math. Sci.,\linebreak Central China Normal University, Wuhan 430079, China}
\affil[2]{School of Science, East China University of Technology, Nanchang 330013, China}

\date{\today}

\allowdisplaybreaks
\begin{document}
\baselineskip=0.23in

\maketitle

\begin{abstract}
For a fixed integer \(k\ge 2\), let \(kK_3\) denote the vertex-disjoint
union of \(k\) triangles. A recent fixed-size spectral theorem of Das
and Yamini asserts that, for all sufficiently large \(m\), every
\(kK_3\)-free graph \(G\) of size \(m\) satisfies
\(
\lambda(G)\le (k-1)+\sqrt{m-k(k-1)},
\)
and equality holds if and only if $(2k-1)\mid m$ and
\(
G\cong \bigl(K_{2k-1}\vee qK_1\bigr)\cup tK_1,\, q=\frac{m}{2k-1}-(k-1)
\)
for some $t\ge 0$. They explicitly posed the open problem:
Let $k\ge 2$ be fixed and $\ell$ be a residue in $\{1,\dots,2k-2\}$.
For all sufficiently large integers $m\equiv \ell\pmod{2k-1}$, determine the exact value of
\(
\max\bigl\{\lambda(G): e(G)=m,\ G\text{ is }kK_3\text{-free}\bigr\},
\)
and characterize all graphs attaining this maximum. In this paper, using the positive-defect
version of the bounded-core method for divisible sizes together with several new ideas developed in this paper,
we give a complete solution to the aforementioned open problem.
\end{abstract}

\keywords{Bounded core; Brualdi--Hoffman--Tur\'{a}n problem; Erd\H{o}s--Gallai theorem; Perron vector; Spectral radius}

\MSC{05C50, 05C35}

\hypertarget{introduction}{%
\section{Introduction}}
All graphs considered in this paper are finite, undirected, and simple.
Let $G=(V(G),E(G))$ be a graph with vertex set $V(G)$ and edge set $E(G)$,
and write $n=|V(G)|$ for its \textit{order} and $m=e(G)=|E(G)|$ for its \textit{size}.

A graph $G$ is said to be $H$-free if $G$ does not contain $H$ as a (not necessarily induced) subgraph.
Given a family of graphs $\mathcal{H}$, $G$ is $\mathcal{H}$-free if it is $H$-free for every $H\in\mathcal{H}$. For a graph \(G\), write \(A(G)\) for
its adjacency matrix, and \(\lambda(G)\) for the spectral radius of \(A(G)\).

A central topic in extremal graph theory is the \textit{Tur\'an type problem}: Determine
\[
\operatorname{ex}(n,F)=\max\bigl\{e(G)\colon |V(G)|=n,\, G\text{ is }F\text{-free}\bigr\},
\]
the maximum number of edges in an $n$-vertex $F$-free graph. The \textit{edge-spectral Tur\'an type problem}, initiated by Nikiforov and others, aims to ask
\begin{equation*}\label{e:1}
\operatorname{spex}(m,F)
=\max\{\lambda(G): G\in \mathcal{G}(m,F)\},
\end{equation*}
where $\mathcal{G}(m,F)=\{G: e(G)=m,\, G\text{ is }F\text{-free}\}.$ It is also known as the Brualdi--Hoffman--Tur\'an type problem; see \cite{brualdi1985spectral}.
In recent years, the above edge spectral Tur\'an type problem has drawn increasing attention from scholars and emerged as one of the frontier research topics in spectral graph theory.

The pioneering work in this field is due to Nosal~\cite{Nosal1970}, he obtained $\lambda(G)\le \sqrt{m}$ if $G$ is triangle-free with $m$ edges. Zhai, Lin, and Shu~\cite{zhai2021} obtained several foundational results. They determined the spectral extrema for $C_5$-free ($m\ge 8$) and $C_6$-free ($m\ge 22$) graphs. They raised an open question: what is the extremal graph among $\mathcal{G}(m,C_{2k+1}) \cup \mathcal{G}(m,C_{2k+2})$? Sun, Li, and Wei~\cite{sun2023} identified the graph among $\mathcal{G}(m,\theta_{1,2,3})$ (resp. $\mathcal{G}(m,\theta_{1,2,4})$) having maximum spectral radius. They further investigated $\{C_3,C_5\}$-free non-bipartite graphs and posed an open problem for even $m$. Li, Sun, and Yu~\cite{li2022dm} considered graphs forbidding all short odd cycles $\{C_3,C_5,\dots,C_{2k+1}\}$; They derived trace-based upper bounds for $\lambda_1^{2k}+\lambda_2^{2k}$ and characterized the corresponding extremal graphs; For non-bipartite $\{C_3,C_5,\dots,C_{2k+1}\}$-free graphs, a spectral-radius inequality was also established. Li, Shen, and Zhao~\cite{li2026spectral} solved the Zhai-Shu open question for even $m\ge 70$ over triangle-free non-bipartite graphs, classifying extremal graphs by $m \pmod 3$. Li, and Yu~\cite{li2023spectral} treated $\{C_3,C_5\}$-free non-bipartite graphs for large even $m$, and proposed a conjecture for general odd-cycle-free non-bipartite graphs with even size. Later this conjecture was resolved by Lou, Lu and Zhai~\cite{lou2025refinement}. Lou, Lu, and Huang~\cite{lou2024spectral} identified the extremal graph for $\{C_3,C_5,\dots,C_{2k+1}\}$-free non-bipartite graphs with odd size $m$.

Li, Zhao, and Zou~\cite{li2025spectral} studied $F_{2k+2}$-free graphs (here $F_k$ is called a fan graph, defined by $F_k=K_1\vee P_{k-1}$). Using a uniform approach, they confirmed the Yu-Li-Peng conjectures on fan graphs and friendship graphs. Chang et al. \cite{liu2026brualdi} determine the graph among $\mathcal{G}(m,\{\theta_{1,2,2k-1},\theta_{1,2,2k}\})$ (resp. $\mathcal{G}(m,\{C_{2k-1},C_{2k}\})$ having maximum spectral radius under the condition $m+\frac{k(k+1)}{2}\not\equiv 0 \pmod{k}$.  Zhang, and Wang~\cite{zhang2026spectral} considered star-forest-free graphs with large $m$ and gave the spectral upper bound together with the extremal graph. Very recently, Fang, Zhai and Zhang~\cite{FangZhaiZhang}, Chen, Gao and Li~\cite{cgl2026}, Gao, Geng and Li~\cite{GaoGengLi} studied $W_{2k+1}$-free graphs (here $W_{2k+1}$ is called a wheel graph, defined by $W_{2k+1}=K_1\vee C_{2k}$), solved a conjecture posed by Yu, Li and Peng~\cite{YLP} and a conjecture posed by Yu, Zhang, and Zhang~\cite{yuzhang2025survey}. For more recent advances along this line, one may consult the nice survey of Yu, Zhang, and Zhang~\cite{yuzhang2025survey}.

Our work in this paper {is} motivated firstly by Wang, Jia, and Ni~\cite{WangJiaNi2026}, who 
showed that if $m \ge 18$ and $G$ is a $2K_3$-free graph with $m$ edges, then
\(
\lambda(G) \le 1+\sqrt{m-2}
\)
with equality if and only if $m\equiv 0\pmod{3}$ and $G\cong \bigl(K_3 \vee \frac{m-3}{3}K_1\bigr)\cup tK_1$ for some $t\ge 0$. Subsequently, Das and Yamini~\cite{DasYamini2026} determined the graph among $\mathcal{G}(m,kK_3)$ having maximum spectral radius for $k\ge 3.$ We describe it as follows.
\begin{theorem}[Das and Yamini~\cite{DasYamini2026}]
\label{Das}
For every integer $k\ge 2$ there is a constant $M_0=M_0(k)$ such that the following holds.
Let $m\ge M_0$ and let $G$ be a $kK_3$-free graph with $e(G)=m$. Then
\(
\lambda(G)\le (k-1)+\sqrt{m-k(k-1)}.
\)
Equality holds if and only if $(2k-1)\mid m$ and
\(
G\cong \bigl(K_{2k-1}\vee qK_1\bigr)\cup tK_1,\, q=\frac{m}{2k-1}-(k-1)
\)
for some $t\ge 0$. In particular, the nontrivial component of every equality graph is unique.
\end{theorem}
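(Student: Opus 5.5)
We prove the Das--Yamini bound by a bounded-core argument on a spectrally extremal graph. Let $G$ maximize $\lambda=\lambda(G)$ among $kK_3$-free graphs of size $m$; isolated vertices can be discarded, and we may take $G$ connected. First we check that the claimed extremal graph is admissible. The graph $K_{2k-1}\vee qK_1$ is $kK_3$-free, because every triangle must use at least two of the $2k-1$ core vertices. Its spectral radius satisfies $\lambda^2-(2k-2)\lambda-(2k-1)q=0$. Since $m=\binom{2k-1}{2}+(2k-1)q$, this gives exactly $\lambda=(k-1)+\sqrt{m-k(k-1)}$. In particular $\lambda(G)\ge \sqrt m$ for large $m$.

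Let $x$ be the Perron vector of $G$, normalized so that $\max_v x_v=x_{u^*}=1$.

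\textbf{Step 1: bounding the triangle structure.} By the Erd\H{o}s--Gallai-type fact for disjoint triangles, a $kK_3$-free graph has a set $S$ of at most $3(k-1)$ vertices meeting every triangle. So $G-S$ is triangle-free, and Nosal's bound gives $\lambda(G-S)\le\sqrt{e(G-S)}$. We then run the standard edge-counting identity
\[
\lambda^2x_{u^*}=d(u^*)+\sum_{v}\bigl(d_{N(u^*)}(v)\bigr)x_v ,
\]
together with $\lambda^2\ge m-k(k-1)+\ldots$. From this we deduce that a bounded set $L$ of vertices (the ``core'') carries large Perron weight, meaning $x_v\ge c>0$ and degree $\Omega(m)$. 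All remaining vertices have $x_v=O(1/\sqrt m)$ and bounded degree into $V\setminus L$.

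\textbf{Step 2: identifying the core.} The vertices of $L$ have linearly many common neighbours. Hence if $G[L]$ contained a matching-like structure of $k$ disjoint edges, each could be completed to a triangle with distinct outside vertices, creating $kK_3$. Consequently $G[L]$ has matching number at most $k-1$. Moreover, edges inside $V\setminus L$ combined with core vertices are also restricted. Precisely, the family of triangles of type ``core edge + outside vertex'' and ``core vertex + outside edge'' must not contain $k$ disjoint members.

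Comparing $\lambda$ with the target value, we use $\lambda^2-(2k-2)\lambda$ expressed as a weighted edge count. The aim is the inequality
\[
\lambda^2-(2k-2)\lambda\le (2k-1)q'+\text{(error)},
\]
where the $(2k-1)$ comes from the condition that $\nu(G[L])\le k-1$ with $|L|$ effectively at most $2k-1$. This is the Erd\H{o}s--Gallai extremal configuration $K_{2k-1}$ for matching number $k-1$ when $|L|$ is bounded but the outside is large.

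\textbf{Step 3: the main obstacle.} The hard part is the precise spectral comparison. We must show that every alternative allowed by $\nu\le k-1$ loses spectral radius compared with $K_{2k-1}\vee qK_1$ at fixed $m$. These alternatives are: an Erd\H{o}s--Gallai configuration with a small set of $s<k-1$ dominating vertices plus a clique, and configurations with outside edges or pendant structures.

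To handle this, we would write $\lambda$ for each candidate via its quotient matrix and expand in $m$. Each deficiency (a missing core edge, an outside edge, a vertex not joined to the whole core) costs a term of order $1$ in $\lambda$, or of order $\sqrt m$ in $\lambda^2$. This is strictly more than any gain. The comparison yields the strict inequality unless $G=K_{2k-1}\vee qK_1$ exactly, which forces $(2k-1)\mid m$.

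Equality therefore characterizes the graph. The leftover isolated vertices give the $tK_1$ term, and the uniqueness of the nontrivial component follows.
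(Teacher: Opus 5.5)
The paper does not reprove this theorem; it quotes it from Das--Yamini. Its Sections~\ref{s4}--\ref{s5}, however, carry out the bounded-core scheme you are sketching, and that argument adapts directly to the divisible case. Measured against it, your proposal has two genuine gaps.

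\emph{Step~1 is unsupported, and its claimed structure is not automatic.} The concentration of Perron weight on a bounded core does not follow from the identity for $\lambda^2$ plus a bound like $\lambda\ge\sqrt m$. Worse, it fails for competitors lying only $O(m^{-1/2})$ below the target.
\begin{itemize}
\item For $k=2$, the cone $K_1\vee K_{a,a}$ is $2K_3$-free, since every triangle uses the apex, and has $m=a^2+2a$ edges.
\item Its spectral radius is $\lambda=\frac{a+\sqrt{a^2+8a}}{2}=a+2-\frac4a+O(a^{-2})$, while the target is $1+\sqrt{m-2}=a+2-\frac{3}{2a}+O(a^{-2})$.
\item Yet every non-apex coordinate is about $\frac12$, every non-apex degree is about $\sqrt m$, and $G[N(u^\ast)]=K_{a,a}$ has no bounded vertex cover.
\item Your triangle-hitting set $S$ does not help: here it contains $u^\ast$, so edges of $G[N(u^\ast)]$ need not meet $S\setminus\{u^\ast\}$.
\end{itemize}
The missing idea is cone exclusion. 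You must show that $G-u^\ast$ contains $k-1$ disjoint triangles, so that their trace on $N(u^\ast)$ is a bounded vertex cover of $G[N(u^\ast)]$. After edge-shifting, this reduces to proving $\lambda(K_1\vee H)<(k-1)+\sqrt{m-k(k-1)}$ for every $(k-1)K_3$-free $H$ with $e(K_1\vee H)=m$. The paper proves this in Lemmas~\ref{maximum-coordinateapex}--\ref{strengthened-cone-exclusion}, using three ingredients:
\begin{itemize}
\item the sharp Perron gap $\lambda(K_1\vee H)-\lambda(H)\ge 1$;
\item the resolvent inequality $\rho^3\le\rho(n+\mu^2)+2e$;
\item Nosal's bound when $k=2$, or the $(k-1)$-case of the theorem otherwise, i.e.\ an induction on $k$.
\end{itemize}
You also need to work under the second-order hypothesis $\mathfrak D_k(G)\le 0$, i.e.\ $\lambda^2-2(k-1)\lambda\ge m-(k-1)(2k-1)$, not merely $\lambda\ge\sqrt m$. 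Only this makes the exact defect identity (Lemma~\ref{exact-defect-identity}) yield $2(k-1)$ heavy vertices, a linear twin class complete to them, and a bounded exceptional set.

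\emph{Step~3 rests on a false size estimate.} Deviations from $K_{2k-1}\vee qK_1$ do not cost order $\sqrt m$ in $\lambda^2$. Each of the following changes $\lambda^2-2(k-1)\lambda$ by only $O(1)$, i.e.\ changes $\lambda$ by only $O(m^{-1/2})$:
\begin{itemize}
\item the cone above;
\item replacing one twin by two vertices joined to $\ell$ and $2k-1-\ell$ core vertices;
\item adding a single partial vertex as in $T_{k,m}^{(\ell)}$, where $\lambda^2-2(k-1)\lambda=m-\binom{2k-1}{2}-\delta_\ell$ with $\delta_\ell=\Theta(1)$.
\end{itemize}
So an inequality $\lambda^2-(2k-2)\lambda\le(2k-1)q'+(\text{error})$ with unquantified error cannot give the exact bound. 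Nor can the strict inequality for $(2k-1)\nmid m$ be read off from order-one losses. Candidate-by-candidate quotient-matrix comparisons are also unavailable until the exceptional part is known to be bounded.

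What is needed is the route of Section~\ref{s5}:
\begin{itemize}
\item Reduce to $V(G)=N_j\sqcup\mathcal I_j\sqcup X$ with $|X|=O_k(1)$.
\item Use the Schur-complement expansion to get $\mathfrak D_k(G)=e(G[X])+\frac1{2k-1}\sum_{x\in X}|N_{N_j}(x)|\bigl(2k-1-|N_{N_j}(x)|\bigr)+O_k(m^{-1/2})$.
\item Observe that the main term lies in $\frac1{2k-1}\mathbb Z_{\ge0}$. It vanishes only if $e(G[X])=0$ and every $x\in X$ is a full twin, i.e.\ $G\cong K_{2k-1}\vee qK_1$, which forces $(2k-1)\mid m$.
\item Otherwise the discrete gap $\frac1{2k-1}$ beats the analytic error and contradicts $\mathfrak D_k(G)\le 0$.
\end{itemize}
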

Das and Yamini~\cite{DasYamini2026} explicitly posed the following problem.
\begin{problem}[Das and Yamini~\cite{DasYamini2026}]
\label{prob:nondivisible}
Fix $k\ge 2$ and a residue $\ell\in\{1,\dots,2k-2\}$.
For all sufficiently large integers $m\equiv \ell\pmod{2k-1}$, determine the exact value of
\(
\max\bigl\{\lambda(G): e(G)=m,\ G\text{ is }kK_3\text{-free}\bigr\},
\)
and characterize all graphs attaining this maximum.
\end{problem}
In this paper, we address Problem~\ref{prob:nondivisible}, and give a complete resolution for it, which covers the remaining cases of Theorem~\ref{Das}. In order to formulate our result, we define
a graph \(T_{k,m}^{(\ell)}\) on a vertex partition
\(
R\cup U\cup\{z\}
\)
satisfying the following: The graph induced by $R$ is $K_{2k-1};$ \(U\) is an independent set of size \(t=\frac{m-\ell}{2k-1}-(k-1)\), and \(N(u)=R\) for every
  \(u\in U\); \(z\) has exactly \(\ell\) neighbors in \(R\) and no neighbors in \(U\).
The following is our main result in this paper.
\begin{theorem}\label{main-theorem} For every fixed integer \(k\ge2\),
there exists \(M(k)\) such that the following holds. Let \(m\ge M(k)\) with $m\equiv \ell\pmod{2k-1}$, where $1\le\ell\le2k-2$.
If \(G\) is a \(kK_3\)-free graph of size \(m\), then
\(
\lambda(G)\le\lambda\bigl(T_{k,m}^{(\ell)}\bigr).
\)
Equality holds if and only if
\(
G\cong T_{k,m}^{(\ell)}\cup sK_1
\)
for some \(s\ge0\).
\end{theorem}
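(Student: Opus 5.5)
We first handle the lower-order structure by the bounded-core method. Let $G$ be an extremal $kK_3$-free graph of size $m\equiv \ell \pmod{2k-1}$ with no isolated vertices, and let $\lambda=\lambda(G)$ with Perron vector $\mathbf{x}$, normalized so that $\max_v x_v=1$. Since $T_{k,m}^{(\ell)}$ contains $K_{2k-1}\vee tK_1$ with $t=\frac{m-\ell}{2k-1}-(k-1)$, the lower bound $\lambda\ge (k-1)+\sqrt{m-\ell-k(k-1)}$ holds, and in particular $\lambda=\sqrt{m}+O(1)$. The first step is to use $\lambda^2 x_u=\sum_{v}\sum_{w\sim v}x_w$ for $x_u=1$, together with $\sum_v d(v)\le 2m$, to show the following. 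There is a set $R$ of vertices with $x_v\ge \varepsilon$, with $|R|$ bounded by a function of $k$. Every vertex of $R$ has degree $(1-o(1))m/|R|$-type lower bounds. Vertices outside $R$ have small weight $O(1/\sqrt m)$.

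The second step uses $kK_3$-freeness through the Erd\H{o}s--Gallai-type structure. Heavy vertices have huge common neighbourhoods, so any $2k-1$ of them that are pairwise adjacent, or any $k$ disjoint triangles using them, can be completed. We thus show that $|R|= 2k-1$ and that $R$ induces $K_{2k-1}$. The argument for $R$ inducing $K_{2k-1}$ is that adding edges inside $R$ and moving edges increases $\lambda$, while one checks that $G-R$ restricted to light vertices is triangle-free in a strong sense. More precisely, we show that $G-R$ has a bounded number of edges, and that any edge of $G-R$ together with $R$ would create $kK_3$ unless the configuration is very restricted. This is exactly the divisible-case argument of Theorem~\ref{Das}, and we follow it to obtain that all but $O(1)$ edges are between $R$ and $U$, where $U$ is the set of vertices joined to all of $R$.

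The third step, which is the main obstacle, is the defect analysis. Write $m=(2k-1)(t+k-1)+\ell$. The $\ell$ leftover edges must be placed somewhere, and we compare the candidates. These include a vertex $z$ with $\ell$ neighbours in $R$; a few vertices partially joined to $R$; and edges inside $V\setminus R$, which are allowed only if they do not create $kK_3$. In the last case an edge $uv$ with $u,v\in U$ yields a triangle with any vertex of $R$, and a short check shows that $k$ disjoint triangles then exist, so such edges are forbidden.

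For each configuration we compute $\lambda$ via the characteristic equation obtained from the equitable quotient. The plan is to use $\lambda^2-(2k-2)\lambda\approx |U|(2k-1)+\ldots$ expansions to second order in $1/\sqrt m$, comparing the contributions $\sum x_ux_v$ of the extra edges. An edge from $z$ to $R$ contributes about $2x_z\cdot 1$ with $x_z\approx \ell/\lambda$, which gives a total of order $\ell^2/\lambda$. By contrast, splitting the $\ell$ edges among several partial vertices gives the sum of squares $\sum \ell_i^2/\lambda<\ell^2/\lambda$. Deleting $R$-edges to free more edges loses order $1$, since $x_r\approx 1$. Hence concentrating all defect edges at one vertex $z$ is optimal.

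Finally, a vertex $z$ adjacent to $\ell\le 2k-2$ vertices of $R$ creates no $kK_3$, because $R\cup\{z\}$ and $U$ admit at most $k-1$ disjoint triangles when $U$ is independent. Uniqueness follows from the strictness of each comparison, using the fact that $\lambda$ is strictly monotone under the relevant edge-shift operations. This yields $G\cong T_{k,m}^{(\ell)}\cup sK_1$.
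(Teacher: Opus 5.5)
\textbf{The gap is in your third step.} Your Step~2 yields a clique $R\cong K_{2k-1}$, a class $U$ of vertices whose neighbourhood is exactly $R$, and a bounded remainder $X$. It does not yield $|U|=t$. So the remainder does not carry ``the $\ell$ leftover edges''. It carries $e(G[X])+\sum_{x\in X}d_R(x)$ edges, and this number is only determined modulo $2k-1$: it equals $\ell+j(2k-1)$ when $|U|=t-j$, for any bounded $j\ge 0$.

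Your comparison $\sum\ell_i^2<\ell^2$ is made at fixed total $\sum\ell_i=\ell$. It therefore never confronts, for example, two partial vertices with $d_R(x)+d_R(y)=\ell+2k-1$ replacing one full vertex plus $z$. It also ignores edges in the remainder other than $U$--$U$ edges: edges between partial vertices, and vertices with no neighbour in $R$ together with the edges to them. The quantity that actually has to be minimised is
\[
e(G[X])+\frac{1}{2k-1}\sum_{x\in X}h\bigl(d_R(x)\bigr),\qquad h(d)=d(2k-1-d),
\]
subject to $e(G[X])+\sum_{x}d_R(x)\equiv\ell\pmod{2k-1}$. Your sum-of-squares heuristic is exactly the non-wrapping merge $h(x)+h(y)-h(x+y)=2xy$. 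Two things are missing: the wrap-around merge $h(x)+h(y)-h(x+y-2k+1)=2(2k-1-x)(2k-1-y)$, and the case $e(G[X])>0$. These are precisely the content of the nonzero-residue problem, handled in Lemma~\ref{unique-minimizer}.

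\textbf{Precision of the expansion.} Even with the right optimisation, you need an expansion of $\lambda^2-2(k-1)\lambda$ that is uniform over all bounded remainders and accurate to $o(1)$. Competing configurations differ by only $\Theta(1)$ there, i.e.\ $\Theta(m^{-1/2})$ in $\lambda$. In particular, the normalised Perron vector on $R$ must be uniform up to $o(m^{-1/2})$, because its deviation enters multiplied by $|U|\sim m/(2k-1)$. Writing ``$x_r\approx 1$'' and computing characteristic polynomials configuration by configuration does not supply this.

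The paper gets
\[
\mathfrak D_k(G)=e(G[X])+\frac1{2k-1}\sum_x h\bigl(d_R(x)\bigr)+O_k(m^{-1/2})
\]
from a Schur-complement reduction with $O_k(m^{-1})$ control of the vector on the clique. It then uses a discrete gap: every non-optimal configuration has penalty at least $D_\ell+\frac1{2k-1}$, while $\mathfrak D_k(G)\le\delta_\ell<D_\ell$.

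\textbf{Uniqueness via edge shifts.} Your appeal to strict monotonicity under edge shifts also needs care. Lemma~\ref{edge-shifting} requires a rotation about a common vertex with $x_w\ge x_v$, which you cannot guarantee when the $R$-coordinates are only approximately equal. Each shift must also be checked to preserve $kK_3$-freeness.

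\textbf{Deferring Step 2.} Following the divisible-case argument is not automatic. That case has zero defect ($\mathfrak D_k\le 0$), whereas here only $\mathfrak D_k(G)\le\delta_\ell$ with $\delta_\ell>0$ is available. This is why the paper needs a positive-defect version of the argument, including Lemma~\ref{strengthened-cone-exclusion}.
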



\noindent{\bf Our approach.}\ The proof develops a positive-defect version of the bounded-core method
used for divisible sizes in~\cite{DasYamini2026}. The first new ingredient is a sharp Perron gap
for cones: the apex of \(K_1\vee H\) carries a largest Perron
coordinate, and \(\lambda(K_1\vee H)-\lambda(H)\ge1\). This permits the cone
case to be excluded without invoking the result being proved. An exact
defect identity then forces a clique core of order \(2k-1\), an independent twin class of linear size complete to the core, and a bounded exceptional
layer. A Schur-complement expansion reduces the remaining problem to a
finite congruence optimization whose nonstandard configurations have a
uniform discrete gap. That gap is larger than the analytic error term
and yields exact uniqueness.\vspace{2mm}

\noindent{\bf Organization.}\ \ In the remainder of this section, we give some necessary notation and terminology.  In Section \ref{preliminaries}, we give some preliminaries. In Section \ref{s3}, {we} investigate some properties of the candidate extremal graph. In Section~\ref{s4}, we give some key structure lemmas for characterization of the extremal graph. In Section~\ref{s5}, we give the proof of Theorem~\ref{main-theorem}.\vspace{2mm}

\noindent{\bf Some notation and definitions.}\
For a vertex $v\in V(G)$, $N_G(v)$ denotes its neighborhood and $d_G(v)=|N_G(v)|$ its degree;
when no confusion arises, we drop the subscript. A vertex is an isolated vertex if its degree is 0. For two graphs $G_1$ and $G_2$, $G_1\cup G_2$ denotes their vertex-disjoint union,
and $G_1\vee G_2$ denotes their \textit{join}, obtained from the disjoint union $G_1\cup G_2$
by adding all edges between $V(G_1)$ and $V(G_2)$. For two sets $A$ and $B$, we write $A\sqcup B$ for the disjoint union of $A$ and $B$.

For a vertex subset $R\subseteq V(G)$, $G[R]$ denotes the subgraph of $G$ induced by $R$,
and we write $e(R):=e(G[R])$ for the number of edges inside $R$.
For two disjoint vertex subsets $R,S\subseteq V(G)$, $e(R,S)$ denotes the number of edges
with one endpoint in $R$ and the other in $S$, and for $v\in V(G)$ we write $d_R(v):=|N_G(v)\cap R|$.
For a graph $G$, a \textit{vertex cover} of $G$ is a subset $K$ of $V(G)$ such that every edge of $G$ has at least one endpoint in $K$. A vertex cover $K^\ast$ is a minimum vertex cover if $G$ has no vertex cover $K$ with $|K|<|K^\ast|$. The number of vertices in a minimum vertex cover of $G$ is called the \textit{vertex covering number} of $G$, and denoted by $\tau(G)$. Two distinct edges in a graph $G$ are independent if they are not incident with a common vertex in $G$. A set of pairwise independent edges in $G$ is called a matching in $G$. A \textit{matching} $M$ is a maximum matching of a graph $G$ if there does not exist a matching $M'$ in $G$ such that $|M'|>|M|$. The \textit{matching number} $\nu(G)$ (or just $\nu$, for short) of $G$ is the cardinality of a maximum matching of $G$.

As usual, we will use $K_n,C_n$ to denote complete graph and cycle on $n$ vertices, respectively.
For a graph $H$ and an integer $k\ge 1$, $kH$ denotes the disjoint union of $k$ vertex-disjoint copies of $H$.

Given a vector $\mathbf{x} \in \mathbb{R}^n$, one has
\(
    \|\mathbf{x}\|_2 = \left(\sum_{i=1}^{n} x_i^2\right)^{1/2}
    =\bigl(\mathbf{x}^{\mathsf T} \mathbf{x}\bigr)^{1/2}
\). For any real {symmetric} matrix $M\in \mathbb{R}^{n\times n}$, its \emph{spectral norm} is
\(
\|M\|_2
=\max_{\|x\|_2=1}\|M\boldsymbol{x}\|_2
=\max\bigl\{|\lambda|\,\big|\) $\lambda$ is an eigenvalue of $M\bigr\}$.
The \emph{Frobenius norm} is defined by
\(
\|M\|_F
=\sqrt{\sum_{i,j} |M_{ij}|^2}.
\)
By \cite[Theorem 5.6.26(b)]{H-J-2012}, one sees $\|A\|_2\leq\|A\|_F$ for any real matrix $A$. A vector $\mathbf{x} \in \mathbb{R}^n$ is called a unit vector if $\|\mathbf{x}\|_2 = 1$. The all-ones vector $\mathbf{1}\in\mathbb{R}^n$ is the column vector all of whose entries equal $1$. For $n \in \mathbb{N}$, let $I_n \in \mathbb{R}^{n\times n}$ denote the $n\times n$ identity matrix, i.e.\ $(I_n)_{ij} =1$ if $i=j$ and $(I_n)_{ij} =0$ if $i\neq j$. Let $J_n\in \mathbb{R}^{n\times n}$ be $n\times n$ \emph{all-ones matrix}, that is, \ $(J_n)_{ij} = 1$ for all $i,j$. Then $J_n=\mathbf{1} \mathbf{1}^{\mathsf T}$.

The \textit{adjacency matrix} $A(G)$ of $G$ is the $n\times n$ $0$-$1$ matrix indexed by $V(G)$ with $A(G)_{uv}=1$ if and only if $uv\in E(G)$.
Since $A(G)$ is real and symmetric, its eigenvalues are real; we denote the largest of them by $\lambda(G)$, called the \textit{spectral radius} of $G$.
When $G$ is connected, $A(G)$ is irreducible and nonnegative, so by the Perron-Frobenius theorem $\lambda(G)$ is a simple eigenvalue admitting a positive eigenvector $\mathbf{x}=(x_v)_{v\in V(G)}$, unique up to positive scaling, called the \textit{Perron vector} of $G$. For a eigenvector $\mathbf{y}$ of $A(G)$ and a subset $V'\subseteq V(G),$ the restriction of $\mathbf{y}$ on $V'$ is denoted by $\mathbf{y}|_{V'}$ or $\mathbf{y}|_{G[V']}$.

Throughout the paper, \(k\) is fixed. {Let $f,g:\mathbb{N}\rightarrow\mathbb{R}$ be functions such that $g(m)>0$ for all sufficiently large integers $m$. We write $f(m)=O_k(g(m))$ if there exist constants \(C_k>0\) and \(m_0=m_0(k)\), depending only on \(k\), such that $|f(m)|\leq C_k g(m)$ for all \(m\geq m_0\). Similarly, we write $f(m)=\Omega_k(g(m))$ if there exist constants \(c_k>0\) and \(m_0=m_0(k)\), depending only on \(k\), such that $f(m)\geq c_k g(m)$ for all \(m\geq m_0\). Thus, every implicit constant in \(O_k(\cdot)\), \(\Omega_k(\cdot)\), and every
sufficiently large threshold may depend on \(k\), but not on \(m\) or on
the residue \(\ell\). If $g(m)\neq0$, we write $f(m)=o(g(m))$ if for every $\varepsilon>0$ there exists $m_0=m_0(\varepsilon)$ such that $|f(m)|\leq\varepsilon g(m)$ for all $m\geq m_0$.}

\section{Preliminaries}\label{preliminaries}

\begin{lemma}[\cite{WuXiaoHong2005}]\label{edge-shifting}
Let \(G\) be connected and let \(\mathbf{x}\)
be a positive Perron vector of \(G\). Suppose that \(uv\in E(G)\),
\(uw\notin E(G)\), and \(x_w\ge x_v\). If \(G'=G-uv+uw\), then $\lambda(G')>\lambda(G)$.
\end{lemma}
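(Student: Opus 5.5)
The plan is the classical Rayleigh-quotient argument for the edge-shifting lemma (Lemma~\ref{edge-shifting}). Let $\mathbf{x}$ be the positive unit Perron vector of $G$, so $A(G)\mathbf{x}=\lambda(G)\mathbf{x}$ and $\|\mathbf{x}\|_2=1$. The matrices $A(G')$ and $A(G)$ differ only in the entries $(u,v),(v,u)$, which drop from $1$ to $0$, and the entries $(u,w),(w,u)$, which rise from $0$ to $1$. Therefore
\[
\mathbf{x}^{\mathsf T}A(G')\mathbf{x}-\mathbf{x}^{\mathsf T}A(G)\mathbf{x}=2x_u(x_w-x_v)\ge 0,
\]
since $x_u>0$ and $x_w\ge x_v$. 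By the Rayleigh principle, $\lambda(G')\ge \mathbf{x}^{\mathsf T}A(G')\mathbf{x}\ge\lambda(G)$.

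It remains to rule out equality, which is the only delicate step. Suppose $\lambda(G')=\lambda(G)$. Then both inequalities above are equalities, so $\mathbf{x}$ maximizes the Rayleigh quotient of the symmetric matrix $A(G')$. Hence $\mathbf{x}$ is an eigenvector, $A(G')\mathbf{x}=\lambda(G)\mathbf{x}$. Now compare the $v$-th coordinates of the two eigen-equations. The row of $v$ loses the neighbour $u$, and is otherwise unchanged unless $v=w$. The case $v=w$ is impossible, because $uv\in E(G)$ while $uw\notin E(G)$. So
\[
\lambda(G)x_v=(A(G')\mathbf{x})_v=(A(G)\mathbf{x})_v-x_u=\lambda(G)x_v-x_u,
\]
which forces $x_u=0$. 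This contradicts the positivity of $\mathbf{x}$, which holds because $G$ is connected (Perron--Frobenius). Thus $\lambda(G')>\lambda(G)$.

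Two points need checking. First, $u\neq w$ and $v\neq w$, so the perturbation is exactly as described. Second, $G'$ may have a different vertex-support structure (for example, $v$ may become isolated), but this is harmless: the Rayleigh argument uses only the symmetric matrix $A(G')$ on the same vertex set and does not need $G'$ to be connected. The main obstacle is the strictness step, and it is handled by comparing the eigen-equations at the vertex $v$, as above.
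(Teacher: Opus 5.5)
Your proof is correct and complete. The paper gives no proof of its own and simply cites Wu, Xiao and Hong. Your argument is the standard one for this lemma: the Rayleigh-quotient comparison $\mathbf{x}^{\mathsf T}A(G')\mathbf{x}-\mathbf{x}^{\mathsf T}A(G)\mathbf{x}=2x_u(x_w-x_v)\ge 0$, followed in the equality case by reading the eigen-equation of $A(G')$ at $v$ to force $x_u=0$.
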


\begin{lemma}[\cite{Brouwer-Haemers}]\label{subgraph-spectral-radius}
If $H$ is a subgraph of a connected graph $G$, then $\lambda(H)\leq\lambda(G)$ with equality if and only if $G\cong H$.
\end{lemma}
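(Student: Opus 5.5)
The plan is a Rayleigh quotient comparison combined with the Perron--Frobenius theorem for the irreducible matrix $A(G)$. First we embed $H$ into the vertex set of $G$. Regard $A(H)$ as a $|V(G)|\times|V(G)|$ matrix by padding with zero rows and columns for the vertices of $V(G)\setminus V(H)$; this does not change $\lambda(H)$. Since $H$ is a subgraph of $G$, we have the entrywise inequality $0\le A(H)\le A(G)$.

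Next we prove the inequality. Since $A(H)$ is a nonnegative symmetric matrix, Perron--Frobenius (in its weak form, no connectivity needed) provides a unit vector $\mathbf{y}\ge 0$ with $A(H)\mathbf{y}=\lambda(H)\mathbf{y}$. Extend $\mathbf{y}$ by zeros outside $V(H)$. Then
\[
\lambda(H)=\mathbf{y}^{\mathsf T}A(H)\mathbf{y}\le \mathbf{y}^{\mathsf T}A(G)\mathbf{y}\le \max_{\|\mathbf{z}\|_2=1}\mathbf{z}^{\mathsf T}A(G)\mathbf{z}=\lambda(G).
\]
The first inequality uses $\mathbf{y}\ge0$ together with $A(G)-A(H)\ge 0$. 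The second is the Rayleigh principle for the symmetric matrix $A(G)$.

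The equality case is the only step needing care. Suppose $\lambda(H)=\lambda(G)$; then both inequalities above are equalities.
\begin{enumerate}
\item Equality in the Rayleigh bound forces $\mathbf{y}$ to be an eigenvector of $A(G)$ for $\lambda(G)$. To see this, expand $\mathbf{y}$ in an orthonormal eigenbasis of $A(G)$: any component outside the top eigenspace would strictly lower the quotient.
\item Since $G$ is connected, $A(G)$ is irreducible. Hence $\lambda(G)$ is simple, and every nonnegative nonzero eigenvector for it is strictly positive. Directly: if $y_v=0$ for some $v$, then $0=\lambda(G)y_v=\sum_{u\sim v}y_u$ forces $y_u=0$ for all neighbours $u$ of $v$. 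Connectivity then propagates this to give $\mathbf{y}=0$, a contradiction. So $\mathbf{y}>0$.
\item Because $\mathbf{y}$ vanishes outside $V(H)$, positivity gives $V(H)=V(G)$.
\item Equality in the first inequality gives $\mathbf{y}^{\mathsf T}(A(G)-A(H))\mathbf{y}=0$. Here $A(G)-A(H)$ is a nonnegative matrix and $\mathbf{y}>0$, so every entry vanishes, i.e.\ $A(H)=A(G)$ and $E(H)=E(G)$.
\end{enumerate}
Hence $H=G$, and in particular $G\cong H$.

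For the converse, if $G\cong H$ then the two adjacency matrices are permutation-similar, so their spectral radii agree. Thus the only real obstacle is the positivity of $\mathbf{y}$ in step 2, which is exactly where the connectivity hypothesis on $G$ is used.
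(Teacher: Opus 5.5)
The paper does not prove this lemma; it only cites it from Brouwer--Haemers, where it is a standard consequence of Perron--Frobenius theory. Your argument is a correct and complete version of that standard proof: you compare Rayleigh quotients using a nonnegative eigenvector of $H$ padded by zeros, and connectivity of $G$ forces that vector to be positive, which gives $V(H)=V(G)$ and $A(H)=A(G)$. The one unstated point is that zero-padding leaves $\lambda(H)$ unchanged because $\lambda(H)\ge 0$, which holds since $\operatorname{tr}A(H)=0$.
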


\begin{theorem}[\cite{ErdosGallai1959}]\label{ErdosGallai}
If \(H\) is an \(n\)-vertex graph
with \(\nu(H)\le s\), then

\[
e(H)\le
\max\left\{\binom{2s+1}{2},
\binom s2+s(n-s)\right\}.
\]
\end{theorem}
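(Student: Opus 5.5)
The plan is to derive the Erdős–Gallai bound from the Tutte–Berge formula, $\nu(H)=\min_{S\subseteq V(H)}\tfrac12\bigl(n+|S|-o(H-S)\bigr)$, where $o(\cdot)$ counts odd components. The case $n\le 2s+1$ is immediate, since then $e(H)\le\binom n2\le\binom{2s+1}{2}$. So assume $n\ge 2s+2$.

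Since $\nu(H)\le s$, Tutte–Berge gives a set $S$ with $|S|=r$ and
\[
q:=o(H-S)\ \ge\ n+r-2s .
\]
Because $q\le n-r$, this forces $r\le s$.

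Next we bound $e(H)$ in terms of $r$ alone. Every edge of $H$ either meets $S$ or lies inside a component of $H-S$. At most $\binom r2+r(n-r)$ edges meet $S$.

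The components of $H-S$ include at least $q$ odd ones and together cover $n-r$ vertices. Merging all even components into one odd component only enlarges the bound, so suppose the components have sizes $a_1,\dots,a_q\ge1$ with $\sum a_i=n-r$. The edges inside components number at most $\sum_i\binom{a_i}{2}$.

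By convexity of $\binom{x}{2}$, this sum is maximized, under the constraint $a_i\ge1$, when $q-1$ components are singletons and one component has size
\[
n-r-q+1\ \le\ 2s-2r+1 .
\]
Hence
\[
e(H)\le f(r):=\binom r2+r(n-r)+\binom{2s-2r+1}{2},\qquad 0\le r\le s .
\]

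It remains to maximize $f$ over the integers $0\le r\le s$. Expanding, the coefficient of $r^2$ is $\tfrac12-1+2=\tfrac32>0$, so $f$ is a convex quadratic. Its maximum on $[0,s]$ is therefore attained at an endpoint. The endpoint values are
\[
f(0)=\binom{2s+1}{2},\qquad f(s)=\binom s2+s(n-s)+\binom{1}{2}=\binom s2+s(n-s),
\]
which gives exactly the claimed bound.

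There is no real obstacle in this argument. The only points needing care are the reduction of the component structure to "one odd clique plus singletons" (the merging of even components and the convexity step) and the convexity check for $f$. Both are routine once Tutte–Berge is invoked.
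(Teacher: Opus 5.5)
Your proof is correct and complete. Tutte--Berge gives a barrier $S$ with $|S|=r$ and $q=o(H-S)\ge n+r-2s$, and $q\le n-r$ forces $r\le s$. Splitting the edges into those meeting $S$ and those inside components of $H-S$ gives $e(H)\le f(r)$. Since $f$ is a convex quadratic in $r$ (leading coefficient $\tfrac32$), its maximum over $0\le r\le s$ is $\max\{f(0),f(s)\}$, which is exactly the claimed bound.

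Two steps deserve one explicit sentence each. First, absorbing the even components into an odd one requires $H-S$ to have an odd component at all; this holds because $n\ge 2s+2$ gives $q\ge r+2\ge 2$. Second, the passage from $\binom{n-r-q+1}{2}$ to $\binom{2s-2r+1}{2}$ uses $1\le n-r-q+1\le 2s-2r+1$ together with monotonicity of $\binom{x}{2}$ for integers $x\ge 1$.

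There is nothing in the paper to compare your route with. The paper does not prove this statement; it quotes it from Erd\H{o}s and Gallai and uses it only as a black box to obtain \eqref{boundofnoperfectmatching}. Your Tutte--Berge derivation is the standard self-contained proof and would make that citation unnecessary.

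If you track equality through your chain, you also recover the extremal graphs for $n\ge 2s+2$, namely $K_{2s+1}\cup(n-2s-1)K_1$ and $K_s\vee(n-s)K_1$. The relevant strict steps are strict convexity of $f$, the strict gain when an even component is merged, and the strict gain in the exchange $(a,b)\mapsto(a+1,b-1)$. The paper does not need this, because its equality remark after \eqref{boundofnoperfectmatching} follows from the bound alone. Indeed, $\binom r2+r(h-r)=rh-\binom{r+1}{2}<rh$, and $\binom{2r+1}{2}\le rh$ with equality only when $h=2r+1$.
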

By Theorem \ref{ErdosGallai}, for $r\geq1$, if
\(|H|=h\ge2r+1\) and \(\nu(H)\le r\), then
\begin{align}\label{boundofnoperfectmatching}
\frac{e(H)}h\le r.
\end{align}
Equality in \eqref{boundofnoperfectmatching} is possible only when \(h=2r+1\) and
\(H\cong K_{2r+1}\).

\begin{theorem}[\cite{Konig1931}]\label{Konig}
If $G$ is bipartite, then $\tau(G)=\nu(G)$.
\end{theorem}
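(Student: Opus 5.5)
Since the statement is the classical theorem of K\H{o}nig, I would give the standard alternating-path argument. It proves the two inequalities $\tau(G)\ge\nu(G)$ and $\tau(G)\le\nu(G)$ separately. Throughout, let $G$ be bipartite with bipartition $X\sqcup Y$.

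The inequality $\tau(G)\ge \nu(G)$ holds for every graph. Take a maximum matching $M$ and any vertex cover $K$. Each edge of $M$ has an endpoint in $K$, and these endpoints are distinct because the edges of $M$ are pairwise independent. Hence $|K|\ge |M|$.

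For the reverse inequality I will build a vertex cover of size $|M|$ from a maximum matching $M$.
\begin{itemize}
\item Let $U\subseteq X$ be the set of vertices of $X$ not covered by $M$.
\item Let $Z$ be the set of vertices reachable from $U$ by $M$-alternating paths, meaning paths that start at a vertex of $U$ with a non-matching edge and then alternate between edges outside and inside $M$. In particular $U\subseteq Z$.
\item Define $K=(X\setminus Z)\cup (Y\cap Z)$.
\end{itemize}

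First I check that $K$ is a vertex cover. Suppose an edge $xy$ with $x\in X$, $y\in Y$ were uncovered, so that $x\in X\cap Z$ and $y\in Y\setminus Z$. There are two cases.
\begin{itemize}
\item If $xy\notin M$: an alternating path reaches $x\in X$ either at its start or through a matching edge. In either case it can be extended along $xy$, so $y\in Z$, a contradiction.
\item If $xy\in M$: then $x\notin U$, so the alternating path reaching $x$ must arrive through its matching edge, which is $xy$. Hence $y$ lies on that path and $y\in Z$, again a contradiction.
\end{itemize}
This case check is the step needing the most care: the parity of alternating paths must force paths to enter $X$ only via $M$-edges and enter $Y$ only via non-$M$-edges.

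Next I check that $|K|=|M|$, by showing that each vertex of $K$ is covered by $M$ and that no edge of $M$ has both ends in $K$.
\begin{itemize}
\item Every vertex of $Y\cap Z$ is covered by $M$. Otherwise the alternating path from $U$ to it would be an $M$-augmenting path, contradicting the maximality of $M$.
\item Every vertex of $X\setminus Z$ is covered by $M$, because all unmatched vertices of $X$ lie in $U\subseteq Z$.
\item Let $xy\in M$ with $y\in Y\cap Z$. The path reaching $y$ can be continued along $xy$, so $x\in Z$ and therefore $x\notin K$.
\end{itemize}
Thus each edge of $M$ contributes exactly one vertex to $K$, and every vertex of $K$ arises this way. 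This gives $|K|=|M|$, hence $\tau(G)\le|K|=\nu(G)$. Combined with the first inequality, $\tau(G)=\nu(G)$.
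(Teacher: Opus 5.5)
Your proof is correct. It is the standard alternating-path proof of K\H{o}nig's theorem. The parity bookkeeping you flag is handled properly both in the cover check and in the counting step: paths from $U$ enter $Y$ only along non-matching edges and return to $X$ only along matching edges.

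The paper itself gives no proof. It quotes the result from \cite{Konig1931} and uses it exactly once, in Lemma~\ref{Bbound}, to obtain a vertex cover $S_B\sqcup S_Z$ of the bipartite graph $\mathcal H=G[B'',Z]$ with $|S_B|+|S_Z|=\nu(\mathcal H)\le k-s-2$. Only the nontrivial inequality $\tau\le\nu$ matters there. Your construction $K=(X\setminus Z)\cup(Y\cap Z)$ produces such a cover explicitly.

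The easy inequality $\tau\ge\nu$, which you prove first, is not needed by the paper beyond confirming that the cover is minimum. So the difference is self-containment versus citation. Your argument removes the dependence on the reference at the cost of about half a page. The citation is the natural choice for a classical fact that is peripheral to the spectral argument.
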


\begin{theorem}[\cite{DasYamini2026}]\label{Das--Yamini}
Fix \(s\ge2\). If \(H\) is
an \(sK_3\)-free graph of sufficiently large size \(m\geq M_0(s)\), then
\(
\lambda(H)\le (s-1)+\sqrt{m-s(s-1)}.
\)
Consequently, if \(\mu=\lambda(H)\), then
\(
\mu^2-2(s-1)\mu
\le m-(s-1)(2s-1).
\)
\end{theorem}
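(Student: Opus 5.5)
The first inequality is exactly the theorem of Das and Yamini (Theorem~\ref{Das}, applied with $k=s$), so we take it as given. What remains is the ``consequently'' part, which is pure algebra. We derive it by squaring the shifted bound, and we handle separately the case in which the shift is negative.

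Set $\mu=\lambda(H)$. We will show
\[
(\mu-(s-1))^2\le m-s(s-1)
\]
by splitting into two cases according to the sign of $\mu-(s-1)$.

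\emph{Case $\mu\ge s-1$.} Here $\mu-(s-1)\ge0$. The bound $\mu-(s-1)\le\sqrt{m-s(s-1)}$ has two nonnegative sides, so squaring preserves it and gives the claim. Note that $m-s(s-1)>0$ because $m\ge M_0(s)$ is large.

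\emph{Case $\mu<s-1$.} Since $A(H)$ is a nonnegative matrix, $\mu\ge0$. Hence $|\mu-(s-1)|\le s-1$, and so $(\mu-(s-1))^2\le (s-1)^2$. It therefore suffices that
\[
(s-1)^2\le m-s(s-1),
\qquad\text{i.e.}\qquad m\ge (s-1)(2s-1).
\]
We ensure this by enlarging $M_0(s)$ if necessary; such enlarging is harmless since $M_0(s)$ only needs to be sufficiently large.

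\emph{Expanding.} In both cases we now have $(\mu-(s-1))^2\le m-s(s-1)$. Expanding the square gives
\[
\mu^2-2(s-1)\mu+(s-1)^2\le m-s(s-1),
\]
and therefore
\[
\mu^2-2(s-1)\mu\le m-s(s-1)-(s-1)^2=m-(s-1)(2s-1).
\]
This is the stated inequality.

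The only point needing care is the sign issue in the squaring step, namely the case $\mu<s-1$. It is disposed of by $\mu\ge0$ together with the largeness of $m$.
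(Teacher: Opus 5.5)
Your proposal is correct and matches the paper. The paper gives no separate proof: it takes the first inequality from Das and Yamini (Theorem~\ref{Das} with $k=s$) and treats the second as the immediate algebraic consequence of squaring $\mu-(s-1)\le\sqrt{m-s(s-1)}$. Your separate treatment of the case $\mu<s-1$, using $\mu\ge 0$ and $m\ge (s-1)(2s-1)$, fills in the sign detail that the paper leaves implicit.
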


\begin{lemma}[\cite{Nosal1970}]\label{triangle-free-bound}
If \(H\) is $K_3$-free and has $m$ edges, then $\lambda(H)\le \sqrt{m}$.
\end{lemma}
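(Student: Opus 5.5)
The plan is a Cauchy--Schwarz argument on a nonnegative eigenvector, followed by a simple edge count that uses triangle-freeness. No connectivity assumption will be needed.

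First, set up the eigenvector. Let $\lambda=\lambda(H)$. By Perron--Frobenius applied to the nonnegative matrix $A(H)$, there is a unit eigenvector $\mathbf{x}\ge 0$ with $A(H)\mathbf{x}=\lambda\mathbf{x}$. This holds even if $H$ is disconnected: take the Perron vector of a component attaining $\lambda(H)$ and extend it by zeros. Then for every vertex $u$ we have $\lambda x_u=\sum_{v\in N(u)}x_v$.

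Second, square and apply Cauchy--Schwarz. For each $u$,
\[
\lambda^2x_u^2=\Bigl(\sum_{v\in N(u)}x_v\Bigr)^2\le d(u)\sum_{v\in N(u)}x_v^2 .
\]
Summing over $u$ and exchanging the order of summation gives
\[
\lambda^2=\lambda^2\sum_u x_u^2\le \sum_u d(u)\sum_{v\in N(u)}x_v^2=\sum_v x_v^2\sum_{u\in N(v)}d(u).
\]

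Third, use triangle-freeness to bound the inner sum. Fix $v$. Since $H$ is $K_3$-free, $N(v)$ is an independent set. Hence every edge incident with $N(v)$ has exactly one endpoint in $N(v)$, so $\sum_{u\in N(v)}d(u)=e\bigl(N(v),V(H)\setminus N(v)\bigr)\le m$. Substituting this into the previous display yields $\lambda^2\le m\sum_v x_v^2=m$, that is, $\lambda(H)\le\sqrt m$.

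The only step needing care is the double-counting identity in the third step: one has to note that independence of $N(v)$ rules out counting any edge twice. Everything else is routine. Equality cases are not needed here, since the lemma is used only as an upper bound.
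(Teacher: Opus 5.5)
Your proof is correct and complete. The Cauchy--Schwarz step, the exchange of summation, and the count $\sum_{u\in N(v)}d(u)=e\bigl(N(v),V(H)\setminus N(v)\bigr)\le m$ (valid because $N(v)$ is independent) are all sound, and they give $\lambda^2\le m\sum_v x_v^2=m$.

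The paper does not prove this lemma. It quotes it from Nosal and uses it once, in the $k=2$ case of Lemma~\ref{strengthened-cone-exclusion}, in the form $\mu^2\le e(H)$ for a triangle-free $H$ that need not be connected. So your argument is a self-contained substitute rather than a variant of anything in the paper. Your remark that no connectivity assumption is needed is exactly what that application requires.

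Two small comments:
\begin{itemize}
\item Nonnegativity of $\mathbf{x}$ is never used, since Cauchy--Schwarz holds for any real unit eigenvector belonging to $\lambda$. The Perron--Frobenius discussion can therefore be dropped.
\item Your argument is a weighted-average form of a one-line bound with the same combinatorial input: $\lambda^2$ is the spectral radius of $A(H)^2$, which is at most its maximum row sum $\max_v\sum_{u\in N(v)}d(u)\le m$.
\end{itemize}
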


\section{The graph $T_{k,m}^{(\ell)}$}\label{s3}
From now on, let $k\ge2$ be a fixed integer, 
and let $m$ be a sufficiently large integer with $m\equiv \ell\pmod{2k-1}$, where $1\le\ell\le 2k-2$. Define \(t=\frac{m-\ell}{2k-1}-(k-1)\). By definition, the graph $T_{k,m}^{(\ell)}$ has exactly \(m\) edges. Because $U\sqcup\{z\}$ is an independent set, every triangle of $T_{k,m}^{(\ell)}$ uses at
least two vertices of \(R\). Since $|R|=2k-1$, it follows that
\(T_{k,m}^{(\ell)}\) is \(kK_3\)-free.

Note that $T_{k,m}^{(\ell)}$ has the equitable quotient matrix
\[
Q_\ell=
\begin{pmatrix}
\ell-1&2k-1-\ell&t&1\\
\ell&2k-2-\ell&t&0\\
\ell&2k-1-\ell&0&0\\
\ell&0&0&0
\end{pmatrix}.
\]
By a simple calculation, the characteristic polynomial of $Q_\ell$ is equal to
\begin{align}\notag
p(x)
&=x^4-(2k-3)x^3-((2k-1)t+\ell+2k-2)x^2 \\ \label{polynomialofextremalgraph}
&-\bigl((2k-1)t-\ell(2k-2-\ell)\bigr)x
+\ell(2k-1-\ell)t.
\end{align}
Then $\hat{\lambda}:=\lambda\bigl(T_{k,m}^{(\ell)}\bigr)$ is the largest root of \(p(x)\).

\begin{lemma}\label{candidate-graph}
The spectral radius $\hat{\lambda}$ of graph $T_{k,m}^{(\ell)}$ satisfies
\begin{align}\label{candidate-graph-equation}
\hat{\lambda}^2-2(k-1)\hat{\lambda}
=m-\binom {2k-1}2-\delta_\ell,
\end{align}
where
\begin{align}\label{delta}
\delta_\ell=\ell(2k-1-\ell)
\frac{\hat{\lambda}+t}
{\hat{\lambda}(\hat{\lambda}+1)}.
\end{align}
Moreover,
\begin{align}\label{boundofdelta}
0<\delta_\ell
<D_\ell:=\frac{\ell(2k-1-\ell)}{2k-1}
\le \frac{k(k-1)}{2k-1}
<2k-3.
\end{align}
\end{lemma}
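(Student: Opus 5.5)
The plan is to read \eqref{candidate-graph-equation} off from a factorization of the characteristic polynomial \eqref{polynomialofextremalgraph}, and then to get the bounds on $\delta_\ell$ by comparing $T_{k,m}^{(\ell)}$ with the subgraph $K_{2k-1}\vee tK_1$.

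\textbf{Step 1: the identity.} Since $\binom{2k-1}{2}=(2k-1)(k-1)$ and $m=(2k-1)t+(2k-1)(k-1)+\ell$, we have
\[
N:=m-\binom{2k-1}{2}=(2k-1)t+\ell .
\]
Set $f(x)=x^2-2(k-1)x-N$ and expand:
\[
x(x+1)f(x)=x^4-(2k-3)x^3-(N+2k-2)x^2-Nx .
\]
Compare this with $p(x)$. The cubic and quadratic coefficients agree, because $N+2k-2=(2k-1)t+\ell+2k-2$. For the linear coefficient, $-\bigl((2k-1)t-\ell(2k-2-\ell)\bigr)=-N+\ell(2k-1-\ell)$. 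Hence
\[
p(x)=x(x+1)f(x)+\ell(2k-1-\ell)(x+t).
\]
Put $x=\hat\lambda$, which is a root of $p$ and satisfies $\hat\lambda>0$. This gives $f(\hat\lambda)=-\ell(2k-1-\ell)(\hat\lambda+t)/(\hat\lambda(\hat\lambda+1))=-\delta_\ell$, which is exactly \eqref{candidate-graph-equation} with $\delta_\ell$ as in \eqref{delta}. Positivity of $\delta_\ell$ is immediate, since $1\le\ell\le 2k-2$, $t>0$ and $\hat\lambda>0$.

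\textbf{Step 2: the bound $\delta_\ell<D_\ell$.} This inequality is equivalent to $(2k-1)(\hat\lambda+t)<\hat\lambda(\hat\lambda+1)$, that is, to
\[
g(\hat\lambda)>0,\qquad g(x):=x^2-(2k-2)x-(2k-1)t .
\]
Inserting the identity from Step 1 here would give $g(\hat\lambda)=\ell-\delta_\ell$, which is circular, so I would avoid that route. Instead, note that $K_{2k-1}\vee tK_1$ (the graph on $R\cup U$) is a proper subgraph of the connected graph $T_{k,m}^{(\ell)}$. Its two-class equitable quotient gives $\lambda_0^2-(2k-2)\lambda_0-(2k-1)t=0$, so $\lambda_0:=\lambda(K_{2k-1}\vee tK_1)$ is the largest root of $g$. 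By Lemma~\ref{subgraph-spectral-radius}, $\hat\lambda>\lambda_0$. Since $g$ is positive to the right of its largest root, $g(\hat\lambda)>0$, as required.

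\textbf{Step 3: the remaining inequalities.} For integers $\ell$, the product $\ell(2k-1-\ell)$ is maximized at $\ell\in\{k-1,k\}$, where it equals $k(k-1)$; this gives $D_\ell\le k(k-1)/(2k-1)$. Finally, $k(k-1)<(2k-3)(2k-1)$ is equivalent to $3k^2-7k+3>0$. This holds at $k=2$, since $12-14+3=1>0$, and the left side is increasing for $k\ge2$.

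The only genuinely non-mechanical point is Step 2, where the circularity is avoided by the subgraph comparison. Everything else is direct coefficient matching.
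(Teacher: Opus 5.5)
Your proof is correct and is essentially the paper's argument. The key inequality $\delta_\ell<D_\ell$ comes exactly as in the paper, from comparing with the proper subgraph $K_{2k-1}\vee tK_1$ so that $\hat\lambda$ exceeds the largest root of $x^2-(2k-2)x-(2k-1)t$, and Step~3 is the same elementary check. The one variation is in Step~1: you obtain the identity from the factorization $p(x)=x(x+1)f(x)+\ell(2k-1-\ell)(x+t)$ of the stated quartic, which is indeed $\det(xI-Q_\ell)$. The paper instead eliminates $y_r,y_u,y_z$ from the eigenvector equations, so it never needs the characteristic polynomial.
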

\begin{proof}
Let $\mathbf{y}$ be the positive eigenvector of $A(T_{k,m}^{(\ell)})$ corresponding to $\hat{\lambda}$. Denote $S=N_R(z)$. By symmetry, $\mathbf{y}$ takes the same value on the vertices of
\(S\), \(R\setminus S\) and \(U\), respectively. We may assume that the coordinates of $\mathbf{y}$ corresponding to the vertices of \(S\), \(R\setminus S\), \(U\) and \(\{z\}\) are $y_s,y_r,y_u$ and $y_z$, respectively. By considering the eigenvalue-eigenvector equation of $\hat{\lambda}$ and $\mathbf{y}$ at the vertices of \(S\), \(R\setminus S\), \(U\) and \(\{z\}\), respectively, we have
\begin{align}\label{eigenvalue-eigenvector-equation}
\begin{cases}
\hat{\lambda} y_s=(\ell-1)y_s+(2k-1-\ell)y_r+ty_u+y_z,\\
\hat{\lambda} y_r=\ell y_s+(2k-1-\ell-1)y_r+ty_u,\\
\hat{\lambda} y_u=\ell y_s+(2k-1-\ell)y_r,\\
\hat{\lambda} y_z=\ell y_s.
\end{cases}
\end{align}
The first two equations of (\ref{eigenvalue-eigenvector-equation}) yield
\[
(\hat{\lambda}+1)(y_s-y_r)=y_z.
\]
Combining the above equation and the last equation of (\ref{eigenvalue-eigenvector-equation}), we deduce that
\begin{align}\label{yr}
y_r=(1-\frac{\ell}{\hat{\lambda}(\hat{\lambda}+1)})y_s.
\end{align}
Substituting the last two equations of (\ref{eigenvalue-eigenvector-equation}) and (\ref{yr}) into
the first equation of (\ref{eigenvalue-eigenvector-equation}) and then multiplying both sides by \(\hat{\lambda}\), we have
\[
\hat{\lambda}^2-(2k-2)\hat{\lambda}-(2k-1)t-\ell
=-\ell(2k-1-\ell)
\frac{\hat{\lambda}+t}{\hat{\lambda}(\hat{\lambda}+1)}.
\]
By \(m-\binom {2k-1}2=(2k-1)t+\ell\), \eqref{candidate-graph-equation}--\eqref{delta} hold.

Since $K_{2k-1}\vee tK_1$ is a proper subgraph of $T_{k,m}^{(\ell)}$, by Lemma \ref{subgraph-spectral-radius}, $\hat{\lambda}>\lambda(K_{2k-1}\vee tK_1)$. Note that $\lambda(K_{2k-1}\vee tK_1)$ is the largest root of $x^2-({2k-2})x-(2k-1)t=0$. Therefore,
\(
\hat{\lambda}^2-(2k-2)\hat{\lambda}-(2k-1)t>0.
\)
That is,
\(
(2k-1)(\hat{\lambda}+t)
<\hat{\lambda}(\hat{\lambda}+1).
\)
Hence \(\delta_\ell<D_\ell\). Because $\ell$ is an integer, the maximum of \(\ell(2k-1-\ell)\) on \(1\le\ell\le 2k-2\) is \(k(k-1)\), attained
at \(\ell=k-1,k\), and $k(k-1)<(2k-1)(2k-3)$. This completes the proof.
\end{proof}

By equation \eqref{candidate-graph-equation}, we have $\hat{\lambda}=k-1+\sqrt{m-k(k-1)-\delta_\ell}$. Since $\delta_\ell<2k-3$ and $\sqrt{x-y}-\sqrt{x}=\frac{-y}{\sqrt{x-y}+\sqrt{x}}$ for $x\geq y>0$, we obtain
\begin{align}\label{hatlambda}
\hat{\lambda}=\sqrt m+k-1+O_k(m^{-1/2}).
\end{align}
Because $m=\binom {2k-1}2+(2k-1)t+\ell$, it follows that $t=\frac{m-\binom {2k-1}2-\ell}{2k-1}=\frac{m}{2k-1}+O_k(1)$. Therefore $\hat{\lambda}+t=\frac{m}{2k-1}+\sqrt{m}+O_k(1)$. Also, by equation (\ref{hatlambda}), we get $\hat{\lambda}(\hat{\lambda}+1)=m+(2k-1)\sqrt{m}+O_k(1)$. Hence $\frac{\hat{\lambda}+t}{\hat{\lambda}(\hat{\lambda}+1)}
=\frac{1}{2k-1}+O_k(m^{-1})$. Then
\begin{align}\label{eq:3.1}
\delta_\ell=\ell(2k-1-\ell)(\frac{1}{2k-1}+O_k(m^{-1}))=D_\ell+O_k(m^{-1}).
\end{align}

\begin{remark}
For $k=2$,  we have $\ell(3-\ell)=2$. 
By equation \eqref{delta} and inequality \eqref{boundofdelta}, we have
\begin{align}\label{deltar2}
 \delta_\ell
 =2\frac{\hat\lambda+t}{\hat\lambda(\hat\lambda+1)},
 \qquad 0<\delta_\ell<\frac23<1.
\end{align}
\end{remark}

\section{Preliminary characterization for the extremal graph}\label{s4}
In this section, we obtain a rough characterization of the extremal graph structures. For this purpose, we use the following lemmas.
\begin{lemma}\label{maximum-coordinateapex}
Let $H$ be a graph with at least one vertex and \(\hat{H}=\{u\}\vee H\). Suppose that $\mathbf{y}$ is the positive eigenvector corresponding to $\lambda(\hat{H})$. Then  the coordinate of $\mathbf{y}$ corresponding to $u$ is the maximum in $\mathbf{y}$ and $\lambda(\hat{H})-\lambda(H)\geq1$.
\end{lemma}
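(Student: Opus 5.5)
Write $\lambda=\lambda(\hat H)$ and $\mathbf y=(y_v)$ for the positive eigenvector. Positivity is justified by connectedness: $\hat H$ is connected because $u$ is adjacent to every vertex of $H$, so Perron--Frobenius applies. The plan has two parts. First, compare $y_u$ with every other coordinate directly from the eigen-equations. Second, use the eigen-equations restricted to $H$ as a Rayleigh-quotient test for $A(H)$.

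\emph{Maximality of $y_u$.} For $v\in V(H)$ the eigen-equations read
\[
\lambda y_u=\sum_{w\in V(H)}y_w,\qquad \lambda y_v=y_u+\sum_{w\in N_H(v)}y_w .
\]
Subtracting the second from the first gives
\[
\lambda(y_u-y_v)=\sum_{w\in V(H)\setminus N_H(v)}y_w-y_u .
\]
This quantity has no obvious sign, so a cleaner route is needed. Since $N_{\hat H}[v]\subseteq N_{\hat H}[u]=V(\hat H)$, write instead
\[
(\lambda+1)y_u=\sum_{w\in V(\hat H)}y_w,\qquad (\lambda+1)y_v=\sum_{w\in N_{\hat H}[v]}y_w .
\]
The right-hand side of the first equation dominates the second termwise because all $y_w>0$. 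Hence $(\lambda+1)(y_u-y_v)=\sum_{w\notin N_{\hat H}[v]}y_w\ge 0$, which gives $y_u\ge y_v$.

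\emph{The gap $\lambda(\hat H)-\lambda(H)\ge1$.} Let $\mathbf z=\mathbf y|_{V(H)}$. For each $v\in V(H)$ we have
\[
(A(H)\mathbf z)_v=\lambda y_v-y_u .
\]
By the first part, $y_u\ge y_v$, so $(A(H)\mathbf z)_v\le (\lambda-1)z_v$ for every $v$, and $\mathbf z>0$. A positive vector satisfying $A\mathbf z\le c\,\mathbf z$ entrywise forces $\lambda(A)\le c$ for a nonnegative matrix $A$. One way to see this is the Collatz--Wielandt bound. Alternatively, take a nonnegative Perron vector $\mathbf w\neq0$ of $A(H)$, which exists even if $H$ is disconnected. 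Then
\[
\lambda(H)\,\mathbf w^{\mathsf T}\mathbf z=\mathbf w^{\mathsf T}A(H)\mathbf z\le(\lambda-1)\,\mathbf w^{\mathsf T}\mathbf z ,
\]
and $\mathbf w^{\mathsf T}\mathbf z>0$. Therefore $\lambda(H)\le\lambda-1$, as required.

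The only delicate point is the maximality step: the naive subtraction of the eigen-equations does not have a visible sign. Switching to closed neighbourhoods, i.e. using $A+I$, resolves this immediately. The second step also needs care when $H$ is disconnected, and this is why I pair against a nonnegative eigenvector of $A(H)$ rather than a positive one.
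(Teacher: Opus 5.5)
Your proof is correct and is essentially the paper's own argument. The closed-neighbourhood identity $(\lambda+1)(y_u-y_v)=\sum_{w\notin N_{\hat H}[v]}y_w\ge 0$ is exactly how the paper gets maximality. For the gap, the paper also pairs the restricted equation $(\lambda I-A(H))\mathbf y_H=y_u\mathbf 1$ with a nonnegative eigenvector of $A(H)$; the only difference is that you apply $y_u\ge y_v$ entrywise before pairing rather than after.
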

\begin{proof}
Let \(y_v\) denote the coordinate of \(\mathbf y\) corresponding to \(v\in V(\hat{H})\).  The eigenvalue
equations at \(u\) and at \(v\in V(H)\) are
\begin{align*}
\begin{cases}
 \lambda(\hat{H}) y_u=\sum_{w\in V(H)}y_w,\\[5pt]
 \lambda(\hat{H}) y_v=y_u+\sum_{w\in N_H(v)}y_w.
\end{cases}
\end{align*}
Then
\begin{align*}
 \lambda(\hat{H})(y_u-y_v)
 &=\sum_{w\in V(H)}y_w-y_u-\sum_{w\in N_H(v)}y_w=y_v+\sum_{w\in V(H)\setminus N_H[v]}y_w-y_u.
\end{align*}
Consequently,
\(
 (\lambda(\hat{H})+1)(y_u-y_v)
 =\sum_{w\in V(H)\setminus N_H[v]}y_w\ge0.
\)
Thus \(y_u\ge y_v\) for every \(v\in V(H)\).

Note that
\[
(\lambda(\hat{H}) I_{|V(H)|}-A(H))\mathbf{y}_H=y_u\mathbf1.
\]
Let \(\mathbf{z}\) be a nonzero nonnegative eigenvector of $A(H)$ corresponding to \(\lambda(H)\). Left-multiplying the above equation by \(\mathbf{z}^{\mathsf T}\) on both sides, we derive that
\[
(\lambda(\hat{H})-\lambda(H))\mathbf{z}^{\mathsf T} \mathbf{y}_H
=y_u \mathbf{z}^{\mathsf T}\mathbf1.
\]
Since \(y_u\ge y_v\) for every \(v\in V(H)\), we have \(0<\mathbf{y}_H\le y_u\mathbf1\). Therefore \(\mathbf{z}^{\mathsf T} \mathbf{y}_H\le y_u \mathbf{z}^{\mathsf T}\mathbf1\). So \(\lambda(\hat{H})-\lambda(H)\ge1\). \end{proof}

\begin{lemma}\label{strengthened-cone-exclusion}
Let \(H\) be \((k-1)K_3\)-free and \(\hat{H}=K_1\vee H\). If
\(e(\hat{H})=m\), then $\lambda(\hat{H})<\hat{\lambda}$.
\end{lemma}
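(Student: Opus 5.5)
The plan is to compare $\mu:=\lambda(\hat H)$ with $\hat\lambda$ through the identity \eqref{candidate-graph-equation}. It suffices to show
\[
\mu^2-2(k-1)\mu<m-\binom{2k-1}{2}-D_\ell ,
\]
since $\delta_\ell<D_\ell$ by \eqref{boundofdelta}. Throughout, write $n=|V(H)|$, $e_H=e(H)$ (so $m=n+e_H$), $\beta=\lambda(H)$, and let $u$ be the apex.

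\textbf{Step 1 (cheap non-strict bound).} Every $kK_3$ in $\hat H$ uses $u$ in at most one triangle, so $H$ would contain $(k-1)K_3$. Hence $\hat H$ is $kK_3$-free, and Theorem~\ref{Das} gives $\mu\le (k-1)+\sqrt{m-k(k-1)}$.

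The equality graph $K_{2k-1}\vee qK_1$ is a cone only over $K_{2k-2}\vee qK_1$, which contains $(k-1)K_3$, so this bound is strict. However, it is not strict by the margin $D_\ell$ that we need. So the real work is to gain a constant defect of size at least $D_\ell$, and $D_\ell\le k(k-1)/(2k-1)$.

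\textbf{Step 2 (the cone equation).} From $(\mu I-A(H))\mathbf y_H=y_u\mathbf 1$ and $\mu y_u=\mathbf 1^{\mathsf T}\mathbf y_H$ we get
\[
\mu=\mathbf 1^{\mathsf T}(\mu I-A(H))^{-1}\mathbf 1 .
\]
Equivalently, using the eigen-equations twice,
\[
\mu^2 y_u=n\,y_u+\sum_{w\in V(H)} d_H(w)\,y_w .
\]
Lemma~\ref{maximum-coordinateapex} gives $y_w\le y_u$ and $\mu\ge\beta+1$. Substituting $\mu y_w=y_u+\sum_{v\in N_H(w)}y_v$ once more yields an estimate of the shape
\[
\mu^2\le n+\frac{2e_H}{\mu}+\frac{\sum_w d_H(w)^2}{\mu^2}+\cdots,
\]
where the tail is controlled by $\beta/\mu\le 1-1/\mu$.

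\textbf{Step 3 (two regimes).} I would split according to the size of $e_H$.

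\emph{Small $e_H$} (say $e_H\le C_k\sqrt m$). Step 2 gives $\mu^2\le n+O_k(1)$ up to lower-order terms. Since $m-n=e_H$, this is far below $m+2(k-1)\sqrt m$ unless $e_H$ is of order $\sqrt m$. In that intermediate range one needs the walk terms sharply, and here the $(k-1)K_3$-freeness of $H$ enters through Erd\H{o}s--Gallai type counting.

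\emph{Large $e_H$.} Apply Theorem~\ref{Das--Yamini} to $H$ with $s=k-1$ (or Lemma~\ref{triangle-free-bound} when $k=2$) to get
\[
\beta^2-2(k-2)\beta\le e_H-(k-2)(2k-3).
\]
Then feed this into the resolvent identity of Step 2 to bound $\mu$ by an explicit function $F(n,e_H)$. Maximizing $F$ over $n+e_H=m$ should give $\mu^2-2(k-1)\mu\le m-\binom{2k-1}{2}-c_k$ with $c_k\ge 2k-3>D_\ell$. The comparison graph is $K_1\vee(K_{2k-3}\vee qK_1)$: its spectral radius is about $\sqrt m+k-\tfrac32$, which is strictly below $\hat\lambda\approx\sqrt m+k-1$.

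\textbf{Main obstacle.} The main difficulty is Step 3 in the large-$e_H$ regime. The crude bound $\mu(\mu-\beta)\le n$, obtained by keeping only the top eigenvalue in the resolvent, is too lossy: for non-regular $H$ it can exceed $\sqrt m$ at leading order. So the resolvent expansion must use the Perron coordinates of $H$ and the apex-maximality from Lemma~\ref{maximum-coordinateapex}, rather than just $\beta$.

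What I would try first is to treat $\hat H$ directly. Take a spectral-extremal cone and use edge-shifting (Lemma~\ref{edge-shifting}) toward the apex-maximal coordinates to reduce to $H\cong K_{2k-3}\vee qK_1$ plus a bounded remainder. Then compute its quotient-matrix spectral radius and check the inequality against \eqref{hatlambda}.
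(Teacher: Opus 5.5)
\textbf{There is a genuine gap: the decisive inequality is never derived, and your fallback plan is aimed at the wrong target.}

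\textbf{The missing estimate.} You collect the right ingredients: the identity $\mu=\mathbf 1^{\mathsf T}(\mu I-A(H))^{-1}\mathbf 1$, Theorem~\ref{Das--Yamini} applied to $H$ with $s=k-1$ (Nosal for $k=2$), and the apex gap $\beta\le\mu-1$ from Lemma~\ref{maximum-coordinateapex}. But the step that combines them is not carried out. In Step~3 you only assert that maximizing some $F(n,e_H)$ ``should give'' a margin $c_k\ge 2k-3$, and you flag this yourself as unresolved.

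The missing idea is short and needs no Perron coordinates of $H$. In the spectral expansion $\mu=\sum_i c_i/(\mu-\theta_i)$ with $c_i=(\mathbf 1^{\mathsf T}\mathbf v_i)^2$, write
\[
\frac{1}{\mu-\theta_i}=\frac{\mu+\theta_i}{\mu^2-\theta_i^2}\le\frac{\mu+\theta_i}{\mu^2-\beta^2}\qquad(|\theta_i|\le\beta<\mu).
\]
Since $\sum_i c_i=n$ and $\sum_i c_i\theta_i=\mathbf 1^{\mathsf T}A(H)\mathbf 1=2e_H$ exactly, this gives $\mu^3\le\mu(n+\beta^2)+2e_H$. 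Only even powers of the $\theta_i$ are replaced by powers of $\beta$, so nothing is lost at leading order. By contrast, your crude bound $\mu(\mu-\beta)\le n$ loses at leading order, and a walk-expansion tail with ratio $1-1/\mu$ cannot reach constant precision.

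From this inequality the proof closes as follows.
\begin{enumerate}[label=\textup{(\roman*)}]
\item For $k=2$, Nosal's bound $\beta^2\le e_H$ gives $\mu^3\le m\mu+2m$ directly.
\item For $k\ge3$ and $e_H\le M_0$, it gives $\mu^2\le m+O_k(1)<\hat\lambda^2$. So there is no ``intermediate range'', and no Erd\H{o}s--Gallai counting is needed; your split at $C_k\sqrt m$ creates an artificial difficulty.
\item For $e_H>M_0$, insert $n+\beta^2\le m+2(k-2)\beta-(k-2)(2k-3)$, then use $\beta\le\mu-1$ and $e_H\le m$. This yields $Q(\mu)\le0$ for $Q(x)=x^3-2(k-2)x^2-\bigl(m-(k-2)(2k-1)\bigr)x-2m$. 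Meanwhile \eqref{candidate-graph-equation} gives $Q(\hat\lambda)=(2k-3-\delta_\ell)\hat\lambda-2(k-1)(2k-1)-2\delta_\ell>0$, and $Q$ is increasing on $[\hat\lambda,\infty)$.
\end{enumerate}
This is where your guessed margin $2k-3$ actually comes from. The apex gap is essential here. With only $\beta<\mu$, the same computation gives a margin of $1$. That is below $\delta_\ell$ for large $m$ when, for example, $k=3$ and $\ell\in\{2,3\}$, since then $\delta_\ell\to D_\ell=6/5$.

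\textbf{The fallback plan.} It does not repair this. Edge-shifting inside $\hat H$ need not preserve $(k-1)K_3$-freeness of $H$. More importantly, its target is wrong: $K_1\vee(K_{2k-3}\vee qK_1)=K_{2k-2}\vee qK_1$ is far from the worst cone. For example, $H=K_{k-2}\vee K_{a,a}$ is $(k-1)K_3$-free, because every triangle meets the $K_{k-2}$. Its cone $K_1\vee H=K_{k-1}\vee K_{a,a}$ has $\lambda=\sqrt m+k-1-O_k(m^{-1/2})$, which agrees with $\hat\lambda$ to within $O_k(m^{-1/2})$ by \eqref{hatlambda}. So there is no gap of order $\tfrac12$ in $\lambda$ to exploit. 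Cones are separated from $T^{(\ell)}_{k,m}$ only in the constant term of $\lambda^2-2(k-1)\lambda$, so any argument must be sharp to that precision. The cubic inequality above provides exactly that.
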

\begin{proof}
Denote $n=|V(H)|$, $e=e(H)$. Then $m=n+e$.
Write \(\rho=\lambda(\hat{H})\) and \(\mu=\lambda(H)\). By Lemma \ref{maximum-coordinateapex}, we have $\rho>\mu$. Let $\mathbf{y}$ be the positive eigenvector corresponding to $\lambda(\hat{H})$, 
Then
\begin{align*}
\begin{cases}
\mathbf1^{\mathsf T}\mathbf{y}_H=\rho y_1,\\
\mathbf1 y_1+A(H)\mathbf{y}_H=\rho \mathbf{y}_H,
\end{cases}
\end{align*}
where $y_1$ is the coordinate of $\mathbf{y}$ corresponding to the vertex in $V(\hat{H})\setminus V(H)$. Since $\rho>\mu$, it follows that
\begin{align}\label{rhoequation}
\rho=\mathbf1^\mathsf{T}(\rho I_n-A(H))^{-1}\mathbf1.
\end{align}

Let
\(\theta_1,\ldots,\theta_n\) be the eigenvalues of \(A(H)\), counted with
algebraic multiplicities, and choose an orthonormal eigenbasis
\(\mathbf{v}_1,\ldots,\mathbf{v}_n\) of \(\mathbb{R}^n\) such that for every $i\in [n]$,
$$
\text{ $A(H)\mathbf{v}_i=\theta_i \mathbf{v}_i$\qquad  and \qquad $
\mathbf{v}_i^{\mathsf{T}}\mathbf{v}_j=
\begin{cases}
1, &i=j,\\
0, &i\neq j.
\end{cases}
$}
$$
Write $M=(\mathbf{v}_1,\cdots,\mathbf{v}_n)$, and $\Lambda=diag(\theta_1,\cdots,\theta_n)$. Then $A(H)=M\Lambda M^\mathsf{T}=\sum_{i=1}^n\theta_i\mathbf{v}_i\mathbf{v}_i^\mathsf{T}$. For each \(i\in[n]\), set $c_i=(\mathbf{1}^\mathsf{T}\mathbf{v}_i)^2$. Then $\sum_{i=1}^n c_i=n$  and $\sum_{i=1}^n\theta_i c_i=\sum_i\theta_i \mathbf{1}^\mathsf{T}\mathbf{v}_i\mathbf{v}_i^\mathsf{T}\mathbf1=\mathbf1^\mathsf{T} (A(H))\mathbf1=2e$. Moreover, $(\rho I_n-A(H))^{-1}=\sum_{i=1}^n\frac{1}{\rho-\theta_i}\mathbf{v}_i\mathbf{v}_i^\mathsf{T}$. Hence,
by equation (\ref{rhoequation}) and \(|\theta_i|\le\mu<\rho\), we have
\begin{align*}
\rho&=\sum_{i=1}^n\frac{(\mathbf{1}^\mathsf{T}\mathbf{v}_i)^2}{\rho-\theta_i}
=\sum_{i=1}^n(\mathbf{1}^\mathsf{T}\mathbf{v}_i)^2\frac{\rho+\theta_i}{\rho^2-\theta_i^2}
\leq\sum_{i=1}^n\frac{c_i(\rho+\theta_i)}{\rho^2-\mu^2}\
=\frac{n\rho+2e}{\rho^2-\mu^2}.
\end{align*}
It follows that
\begin{align}\label{rhomu}
\rho^3\le\rho(n+\mu^2)+2e.
\end{align}

Suppose first that \(k=2\). Then \(H\) is $K_3$-free. By Lemma~\ref{triangle-free-bound}, we obtain \(\mu^2\le e\). Since \(m=n+e\), by inequality \eqref{rhomu}, we get $\rho^3\le m\rho+2m$. Let
$Q_{1,m}(x):=x^3-mx-2m$. Then $Q_{1,m}(\rho)\leq0$. On the other hand, by equation \eqref{candidate-graph-equation}, we have
\(
 m=\hat\lambda^2-2\hat\lambda+3+\delta_\ell.
\)
Therefore,
\(
 Q_{1,m}(\hat\lambda)=\hat\lambda^3-m\hat\lambda-2m
 =(1-\delta_\ell)\hat\lambda-6-2\delta_\ell.
\)
By equation \eqref{deltar2}, \(1-\delta_\ell>1/3\), and hence $Q_{1,m}(\hat\lambda)>0$ for sufficiently large \(m\).  Furthermore,
\[
 Q_{1,m}'(\hat\lambda)
 =3\hat\lambda^2-m
 =2\hat\lambda^2+2\hat\lambda-3-\delta_\ell>0
\]
for sufficiently large \(m\), while \(Q_{1,m}''(x)=6x>0\) for
\(x>0\).  Thus \(Q_{1,m}(x)\) is strictly increasing on
\([\hat\lambda,\infty)\).  If \(\rho\ge\hat\lambda\), then
\(
 Q_{1,m}(\rho)\ge Q_{1,m}(\hat\lambda)>0,
\)
a contradiction with $Q_{1,m}(\rho)\leq0$. So $\rho=\lambda(\hat{H})<\hat\lambda$.

We may therefore assume that \(k\ge3\). Let $M_0(k)$ be a constant satisfying Theorem \ref{Das}.
If \(e\leq M_0(k)\), then equation \eqref{rhomu} gives
\(\rho^2\le m+O_k(1)\). By equations (\ref{candidate-graph-equation}) and (\ref{hatlambda}), we have
\(\hat{\lambda}^2=2(k-1)(\sqrt m+k-1+O_k(m^{-1/2}))+m-\binom {2k-1}2-\delta_\ell=m+2(k-1)\sqrt m+O_k(1)\). Hence $\rho=\lambda(\hat{H})<\hat\lambda$. We may therefore assume that \(e>M_0(k)\). By Theorem \ref{Das--Yamini}, we obtain
\(
n+\mu^2
\le m+2(k-2)\mu-(k-2)(2k-3).
\)
By Lemma \ref{maximum-coordinateapex}, we get \(\mu\le\rho-1\), and hence
\(
n+\mu^2
\le m+2(k-2)\rho-(k-2)(2k-1).
\)
Substituting the above inequality into \eqref{rhomu}, and by
\(e\le m\), we have $\rho^3-2(k-2)\rho^2-m\rho+(k-2)(2k-1)\rho-2m\leq0$.
Let $Q_{k,m}(x)
=x^3-2(k-2)x^2-mx
+(k-2)(2k-1)x-2m$. It follows that $Q_{k,m}(\rho)\le0$.
From equation (\ref{candidate-graph-equation}), we have $m=\hat{\lambda}^2-2(k-1)\hat{\lambda}+(k-1)(2k-1)+\delta_\ell$.
Therefore,
\[
Q_{k,m}(\hat{\lambda})
=(2k-3-\delta_\ell)\hat{\lambda}
-2(k-1)(2k-1)-2\delta_\ell.
\]
By inequality \eqref{boundofdelta}, \(2k-3-\delta_\ell>0\), so $Q_{k,m}(\hat{\lambda})>0$ for large \(m\). Moreover, because $\hat{\lambda}\geq k-1$ and $\delta_\ell<2k-3$, we have
\[
Q_{k,m}'(\hat{\lambda})
=2\hat{\lambda}^2+(6-2k)\hat{\lambda}
-(2k-1)-\delta_\ell>0,
\]
and \(Q_{k,m}''(x)=6x-4(k-2)>0\) for \(x\ge\hat{\lambda}\). Hence
\(Q_{k,m}(x)\) is strictly increasing on \([\hat{\lambda},\infty)\). If
\(\rho\ge\hat{\lambda}\), then
\(
Q_{k,m}(\rho)
\ge Q_{k,m}(\hat{\lambda})>0,
\)
a contradiction with $Q_{k,m}(\rho)\leq0$. So \(\hat{\lambda}>\rho\).\end{proof}

In the remainder of the paper, we always assume that $G$ is a \(kK_3\)-free graph of size \(m\) having maximum spectral radius. Since isolated vertices do not have an effect on the spectral radius, we consider graph $G$ without isolated vertices.
Now we give some characterization for $G$.
\begin{proposition}\label{connected}
The graph \(G\) is connected.
\end{proposition}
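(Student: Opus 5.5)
We argue by contradiction. Suppose $G$ is disconnected, and let $G_1$ be a component with $\lambda(G_1)=\lambda(G)$. Since $G$ has no isolated vertices, every other component has at least one edge, so $m_1:=e(G_1)\le m-1$. The aim is to build a connected $kK_3$-free graph with exactly $m$ edges and spectral radius strictly larger than $\lambda(G_1)$. That would contradict the maximality of $G$.

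\textbf{Step 1: $G$ is at least as good as the candidate.} Since $T_{k,m}^{(\ell)}$ is $kK_3$-free with $m$ edges, maximality gives $\lambda(G_1)=\lambda(G)\ge\hat\lambda=\sqrt m+k-1+O_k(m^{-1/2})$ by \eqref{hatlambda}. This forces $G_1$ to be large. If $m_1$ were bounded, then $\lambda(G_1)\le\sqrt{2m_1}$ would be bounded, which is impossible for large $m$. So $m_1\ge M_0(k)$.

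\textbf{Step 2: bound $\lambda(G_1)$ using the deleted edges.} Write $r=m-m_1\ge1$ for the number of edges outside $G_1$. The component $G_1$ is $kK_3$-free, so Theorem~\ref{Das--Yamini} applied with $s=k$ gives
\[
\lambda(G_1)\le (k-1)+\sqrt{m_1-k(k-1)}\le (k-1)+\sqrt{m-1-k(k-1)}.
\]
Compare this with $\hat\lambda=k-1+\sqrt{m-k(k-1)-\delta_\ell}$. By \eqref{boundofdelta} and \eqref{eq:3.1}, $\delta_\ell=D_\ell+O_k(m^{-1})$.

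\begin{itemize}
\item If $D_\ell<1$, then already $\lambda(G_1)<\hat\lambda$, a contradiction. This covers $k=2$, where $\delta_\ell<2/3$.
\item If $D_\ell\ge 1$, the bound for $r=1$ is too weak. Here we use $r\ge 2k-1$ or a residue mismatch, as follows. If $r<2k-1$ but $m_1\equiv 0\pmod{2k-1}$, then $m_1\le m-\ell$ and $\ell\ge1$. This still only gives deficit $\ell$ versus $D_\ell=\ell(2k-1-\ell)/(2k-1)<\ell$. So in every case $m_1\le m-\ell'$ with $\ell'$ compared against $D_\ell$. I expect this comparison to fail in general, so I would use Step 3 instead.
\end{itemize}

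\textbf{Step 3 (main route): edge relocation.} Take an edge $ab$ in another component. Let $w$ be a vertex of $G_1$ with maximum Perron coordinate, and let $v\in V(G_1)$ be a non-neighbor of $w$. Such a $v$ exists: otherwise $w$ is a cone apex of $G_1$, i.e.\ $G_1=K_1\vee H$.

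\begin{itemize}
\item \emph{Non-cone case.} Delete $ab$ and add $wv$. The new graph $G'$ has $m$ edges and $\lambda(G')\ge\lambda(G_1+wv)>\lambda(G_1)$ by Lemma~\ref{subgraph-spectral-radius}. The difficulty is keeping $G'$ $kK_3$-free. To avoid this, I would instead attach a pendant vertex: replace $ab$ by an edge from $w$ to a new vertex. A pendant edge creates no triangle, so $G'$ stays $kK_3$-free, and $G_1$ plus a pendant edge is connected and strictly contains $G_1$. This strictly increases $\lambda$, so it handles every case uniformly, including cones.
\item \emph{Cone case.} The pendant construction already suffices. Alternatively, Lemma~\ref{strengthened-cone-exclusion} applies whenever $H$ is $(k-1)K_3$-free.
\end{itemize}

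The resulting graph has $m$ edges. It is disconnected only if $ab$'s component still has edges, which does not matter for spectral radius. Its spectral radius strictly exceeds $\lambda(G)$, contradicting maximality. Hence $G$ is connected.

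The main point to verify is that the pendant-edge move preserves $kK_3$-freeness and the edge count. It does: it removes one edge, adds one edge, and the added edge lies in no triangle.
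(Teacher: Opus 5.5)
Your final argument (Step~3 with the pendant edge) is correct and is essentially the paper's proof. Both turn edges outside the dominant component into pendant edges at one of its vertices, which keeps $m$ edges and $kK_3$-freeness while strictly raising $\lambda$. The differences are minor: you move a single edge and get strictness from Lemma~\ref{subgraph-spectral-radius}, whereas the paper moves all $e_1$ outside edges to one vertex $u$ and checks strictness directly via the zero-extended Perron vector. Your Steps~1--2 and the $wv$ variant are unnecessary and can be deleted.
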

\begin{proof}Suppose otherwise, and let \(G_0\) be a component with
\(\lambda(G_0)=\lambda(G)\). Let the other components contain altogether
\(e_1>0\) edges, and choose \(u\in V(G_0)\). Add \(e_1\) new vertices $v_1,\ldots,v_{e_1}$. Let \(G'=G_0\cup\{uv_1,\ldots,uv_{e_1}\}\). Then \(G'\) is \(kK_3\)-free with $m$ edges.

Let $\mathbf{z}$ be the unit Perron vector of \(G_0\). Let $\tilde{\mathbf{z}}$ be the vector of dimension $|V(G')|$ satisfying $\tilde{\mathbf{z}}_v=\mathbf{z}_v$ for $v\in V(G_0)$ and $\tilde{\mathbf{z}}_{v_i}=0$ for $i\in[e_1]$. Then $\tilde{\mathbf{z}}^\mathsf{T}\tilde{\mathbf{z}}=1$ and $\tilde{\mathbf{z}}^\mathsf{T}A(G')\tilde{\mathbf{z}}=\mathbf{z}^\mathsf{T}A(G_0)\mathbf{z}$. Therefore $\lambda(G')\geq\tilde{\mathbf{z}}^\mathsf{T}A(G')\tilde{\mathbf{z}}=\mathbf{z}^\mathsf{T}A(G_0)\mathbf{z}=\lambda(G_0)$.
Since \(\lambda(G_0)=\lambda(G)\), we have \(\lambda(G')\ge\lambda(G)\). If $\lambda(G')=\lambda(G)=\lambda(G_0)$, we have $A(G')\tilde{\mathbf{z}}=\lambda(G')\tilde{\mathbf{z}}$. It follows that $0=\lambda(G')\tilde{\mathbf{z}}_{v_1}=(A(G')\tilde{\mathbf{z}})_{v_1}=\tilde{\mathbf{z}}_u=\mathbf{z}_u>0$, a contradiction. Thus \(\lambda(G')>\lambda(G)\), which contradicts the maximality of $G$. This completes the proof.
\end{proof}

Write $\lambda=\lambda(G)$. By Proposition \ref{connected} and Perron-Frobenius theorem, there exists a positive eigenvector \(\mathbf{x}\) corresponding to $\lambda$ with coordinate $x_v$ corresponding to the vertex $v\in V (G)$. Choose a vertex \(u^\ast\)
at which \(x_{u^\ast}=\max_{v\in V(G)}x_v\), and assume \(x_{u^\ast}=1\). Thus \(0<x_v\le1\) for every vertex $v\in V(G)$ and by the equation at \(u^\ast\), $\lambda=\lambda x_{u^\ast}=\sum_{v\in N_G(u^\ast)}x_v$. In the remainder of the paper, we always assume that $A=N_G(u^\ast)$, $B=V(G)\setminus N_G[u^\ast]$.
\begin{proposition}
The graph \(G-u^\ast\) contains \(k-1\) pairwise
vertex-disjoint triangles.
\end{proposition}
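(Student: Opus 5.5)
We argue by contradiction. Suppose that $H:=G-u^\ast$ does not contain $k-1$ pairwise vertex-disjoint triangles, i.e.\ $H$ is $(k-1)K_3$-free. Two facts will be used throughout.

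First, since $T_{k,m}^{(\ell)}$ is $kK_3$-free with exactly $m$ edges (Section~\ref{s3}) and $G$ is extremal, we have $\lambda\ge\hat\lambda$.

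Second, every graph $G'$ with $G'-u^\ast\subseteq H$ is automatically $kK_3$-free. Indeed, any copy of $kK_3$ in $G'$ has at most one triangle through $u^\ast$. The remaining $k-1$ triangles would then lie in $G'-u^\ast$, and hence in $H$, which is impossible.

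We split into two cases according to whether $B=V(G)\setminus N_G[u^\ast]$ is empty.

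\emph{Case $B\neq\emptyset$.} Pick $b\in B$. Since $G$ has no isolated vertices, $b$ has some neighbour $w$, and $w\neq u^\ast$ because $b\notin N_G(u^\ast)$. So $bw\in E(G)$ and $bu^\ast\notin E(G)$. Moreover $x_{u^\ast}=1\ge x_w$, and $G$ is connected by Proposition~\ref{connected}. Lemma~\ref{edge-shifting}, applied with $u=b$, $v=w$ and the target vertex $u^\ast$, shows that $G'=G-bw+bu^\ast$ satisfies $\lambda(G')>\lambda(G)$. Clearly $e(G')=m$. Also $G'-u^\ast=H-bw\subseteq H$, so $G'$ is $kK_3$-free by the second fact. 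This contradicts the maximality of $\lambda(G)$.

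\emph{Case $B=\emptyset$.} Then $u^\ast$ is adjacent to every other vertex, so $G=K_1\vee H$ with $e(K_1\vee H)=m$ and $H$ $(k-1)K_3$-free. Lemma~\ref{strengthened-cone-exclusion} gives $\lambda(G)<\hat\lambda$. This contradicts $\lambda\ge\hat\lambda$.

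Both cases are impossible, so $G-u^\ast$ contains $k-1$ pairwise vertex-disjoint triangles.

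The only delicate point is the second fact, namely that the edge shift preserves $kK_3$-freeness. It rests on the observation that a $kK_3$ can use $u^\ast$ in at most one of its triangles. The analytic work for the cone case is already contained in Lemma~\ref{strengthened-cone-exclusion}. For $k=2$ the argument is unchanged, with "$(k-1)K_3$-free" simply meaning triangle-free.
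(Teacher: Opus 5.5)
Your proof is correct and follows the paper's argument. You force $B=\emptyset$ by shifting an edge at a vertex of $B$ onto $u^\ast$ via Lemma~\ref{edge-shifting}, then rule out the cone $K_1\vee(G-u^\ast)$ with Lemma~\ref{strengthened-cone-exclusion}. Your justification that the shifted graph stays $kK_3$-free uses the same idea as the paper (a $kK_3$ has at most one triangle through $u^\ast$, and $G'-u^\ast\subseteq G-u^\ast$), just phrased more directly than the paper's version via the new edge.
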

\begin{proof} Suppose that \(G-u^\ast\) is \((k-1)K_3\)-free. If
\(B\ne\emptyset\), take \(w\in B\) and a neighbor \(v\) of \(w\).
Let \(G'=G-wv+wu^\ast\). We claim that \(G'\) is still \(kK_3\)-free.
Otherwise \(G'\) contains \(k\) disjoint triangles. Since \(G\) is \(kK_3\)-free and the only new
edge is \(wu^\ast\), one of these triangles of $G'$ must contain $wu^\ast$.
Removing this triangle from $G'$ leaves \(k-1\) disjoint triangles that use no
\(u^\ast\) and no $wu^\ast$. That is, all of them are in \(G-u^\ast\). This
contradicts the assumption. Hence \(G'\) is \(kK_3\)-free. Since \(x_{u^\ast}\ge x_v\), by Lemma \ref{edge-shifting}, we have $\lambda(G')>\lambda(G)$, a contradiction. Therefore, \(B=\emptyset\). {That is, \(G=\{u^\ast\}\vee(G-u^\ast)\). 
Note that \(G-u^\ast\) is \((k-1)K_3\)-free and $e(G)=m$ is sufficiently large.}
By Lemma \ref{strengthened-cone-exclusion}, we obtain \(\lambda<\hat{\lambda}\), contradiction with the maximality of $G$. This completes the proof.
\end{proof}

Fix disjoint triangles \(T_1,\ldots,T_{k-1}\) in \(G-u^\ast\) and set $T=\bigcup_{i=1}^{k-1}V(T_i)$,
and $S_0=A\cap T$. Then the set \(S_0\) is a vertex cover of \(G[A]\). Indeed, if
\(xy\in E(G[A])\) and \(\{x,y\}\cap S_0=\emptyset\), then the triangle
\(u^\ast xyu^\ast\) is vertex-disjoint from \(T_1,\ldots,T_{k-1}\), and so there are \(k\)
disjoint triangles in $G$, a contradiction. In particular, $|S_0|\le3(k-1)$.

Because \(T_{k,m}^{(\ell)}\) is $kK_3$-free, we have $\lambda\ge\hat{\lambda}$. For every graph \(H\) of size \(m\), define
\[
\mathfrak D_k(H)
=m-(2k-1)(k-1)+2(k-1)\lambda(H)-\lambda^2(H).
\]
The function \(f(x)= -x^2+2(k-1)x+m-(2k-1)(k-1)\) is decreasing for \(x>k-1\).
From equation (\ref{candidate-graph-equation}),
\begin{align}\label{bound DkG}
\mathfrak D_k(G)\le\mathfrak D_k(T_{k,m}^{(\ell)})=m-(2k-1)(k-1)+2(k-1)\hat{\lambda}-\hat{\lambda}^2=\delta_\ell.
\end{align}

Write $C=\{v\in A:d_A(v)>2(k-1)\}$ and $L=A\setminus C$. Let $Q$ be the set of endpoints of the edges of \(G[L]\).

\begin{lemma}\label{exact-defect-identity}
It holds that $2(k-1)\le|C|\le4(k-1)$.
Furthermore, write $|C|=2(k-1)+q$. Then
\[
\mathfrak D_k(G)+\binom q2=\binom{|C|}{2}-e(C)
+e(L)+e(B)
+\mathcal E_C+\mathcal E_L+\mathcal E_B,
\]
where
\[
\mathcal E_C
=\sum_{v\in C}(d_A(v)-2k+2))(1-x_v)\geq0,\
\mathcal E_L
=\sum_{v\in L}(2k-2-d_A(v))x_v\geq0,\
\mathcal E_B
=\sum_{v\in B}d_A(v)(1-x_v)\geq0.
\]
\end{lemma}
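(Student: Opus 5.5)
The plan is to obtain the identity as an exact rewriting of the second‑order eigenvalue equation at $u^\ast$, and then to derive the bounds on $|C|$ separately. Since $x_{u^\ast}=1$ and $\lambda=\sum_{v\in A}x_v$, summing the eigenvalue equations over $v\in A$ gives
\[
\lambda^2=\sum_{v\in A}\lambda x_v=\sum_{v\in A}\sum_{w\in N(v)}x_w
=|A|+\sum_{w\in A}d_A(w)x_w+\sum_{w\in B}d_A(w)x_w,
\]
because each $w\in A$ is counted once from $u^\ast$. Subtracting $2(k-1)\lambda=\sum_{w\in A}2(k-1)x_w$ yields
\[
\lambda^2-2(k-1)\lambda=|A|+\sum_{w\in A}(d_A(w)-2k+2)x_w+\sum_{w\in B}d_A(w)x_w .
\]
Every edge of $G$ lies in $u^\ast$–$A$, inside $A$, between $A$ and $B$, or inside $B$. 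Hence
\[
m=|A|+e(A)+e(A,B)+e(B),\qquad e(A,B)=\sum_{w\in B}d_A(w).
\]

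Substituting these into $\mathfrak D_k(G)=m-(2k-1)(k-1)-(\lambda^2-2(k-1)\lambda)$, the $B$‑terms combine to $e(B)+\mathcal E_B$. Split the $A$‑sum over $C$ and $L$:
\[
-\sum_{C}(d_A-2k+2)x_w=-\sum_C(d_A-2k+2)+\mathcal E_C,\qquad -\sum_L(d_A-2k+2)x_w=\mathcal E_L.
\]
Using $\sum_C d_A=2e(C)+e(C,L)$ and $e(A)=e(C)+e(C,L)+e(L)$, we get
\[
\mathfrak D_k(G)=e(L)-e(C)+e(B)+\mathcal E_B+\mathcal E_C+\mathcal E_L+2(k-1)|C|-(2k-1)(k-1).
\]
Finally, with $a=2k-2$ and $|C|=a+q$, one checks $\binom{|C|}2-\binom q2=a|C|-\binom{a+1}{2}=2(k-1)|C|-(2k-1)(k-1)$. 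This gives exactly the stated identity. The signs of the $\mathcal E$ terms follow from $0<x_v\le1$, the definitions of $C$ and $L$, and $d_A\ge0$.

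For the bounds on $|C|$ I would use the size information together with $\mathfrak D_k(G)\le\delta_\ell<2k-3$ from \eqref{bound DkG}.

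For the upper bound $|C|\le 4(k-1)$: every vertex of $C$ has at least $2k-1$ neighbours in $A$. Recall that $S_0$, with $|S_0|\le 3(k-1)$, is a vertex cover of $G[A]$. I would try to show that if $|C|\ge 4k-3$ one can greedily build $k$ vertex‑disjoint triangles. Use triangles $u^\ast xy$ with $xy\in E(G[A])$ together with triangles inside $A\cup T$. The high $A$‑degree of the vertices of $C$ should let us reroute around the at most $3(k-1)$ vertices of $T$, using the $(k-1)$ triangles $T_i$ and replacing those that meet $A$. This would contradict $kK_3$‑freeness.

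For the lower bound $|C|\ge 2(k-1)$: suppose $q\le -1$. The identity gives
\[
\binom{|C|}2-e(C)+e(L)+e(B)+\sum\mathcal E\le \delta_\ell+\binom q2 ,
\]
a bounded quantity. So $e(L)$, $e(B)$, $\mathcal E_B$ and $\mathcal E_L$ are all $O_k(1)$. Then almost all of the $m$ edges lie in $[\{u^\ast\}\cup C,\ A\cup B]$. I would then estimate $\lambda^2-2(k-1)\lambda$ directly from the displayed formula. With $|C|\le 2k-3$, the sum $\sum_C(d_A-2k+2)x_w$ is at most about $(2k-3)\cdot$(number of vertices), which forces $\lambda^2-2(k-1)\lambda\le m-\Omega_k(\sqrt m)$. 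This contradicts $\lambda\ge\hat\lambda$ and \eqref{hatlambda}.

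The main obstacle I expect is the combinatorial upper bound $|C|\le4(k-1)$, specifically building the $k$ disjoint triangles when $C$ overlaps $T$. The identity itself is routine bookkeeping.
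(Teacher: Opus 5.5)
Your derivation of the identity is correct and is the same bookkeeping as in the paper. Read as a polynomial identity with $\binom q2=q(q-1)/2$, it holds for every value of $|C|$, which matters below.

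The genuine gap is the upper bound $|C|\le 4(k-1)$. You give no argument for it beyond a hope of greedily rerouting triangles. No rerouting is needed; the missing idea is to treat $C\cap T$ and $C\setminus T$ separately. Trivially $|C\cap T|\le|T|=3(k-1)$. If $c\in C\setminus T$, then $c\notin S_0$. Since $S_0$ covers $E(G[A])$, every $A$-neighbour of $c$ lies in $S_0\subseteq T$. As $d_A(c)\ge 2k-1$ and $T$ is the union of the $k-1$ triples $V(T_i)$, pigeonhole gives an $i$ with $V(T_i)\subseteq N_A(c)\subseteq A$. If two distinct $c,c'\in C\setminus T$ were both complete to $V(T_i)=\{t_1,t_2,t_3\}$, then $ct_1t_2$, $u^\ast c't_3$ and the $T_j$ with $j\neq i$ would be $k$ pairwise disjoint triangles. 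Hence each $T_i$ absorbs at most one vertex of $C\setminus T$, so $|C\setminus T|\le k-1$ and $|C|\le 4(k-1)$.

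The lower bound is also not proved as written, because an upper bound on $\sum_{C}(d_A-2k+2)x_w$ cannot create the deficit. After substituting $m=|A|+e(A)+e(A,B)+e(B)$, using $x_w\le 1$ on $C$ only gives $m-\bigl(\lambda^2-2(k-1)\lambda\bigr)\ge \mathcal E_L+\mathcal E_B-O_k(1)$. So any gain of order $\lambda$ must be found in $\mathcal E_L+\mathcal E_B$, which you had just bounded by $O_k(1)$. The contradiction therefore comes from showing that $\mathcal E_L$ is in fact large:
\begin{itemize}
\item Suppose $|C|\le 2k-3$. The identity together with $\mathfrak D_k(G)\le\delta_\ell$ gives $e(L),\mathcal E_L\le\delta_\ell+\binom q2=O_k(1)$.
\item Each $v\in L\setminus Q$ has $N_A(v)\subseteq C$. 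Hence $d_A(v)\le 2k-3$, and $v$ contributes at least $x_v$ to $\mathcal E_L$.
\item Since $\sum_{v\in A}x_v=\lambda$ and all $x_v\le 1$, we get $\mathcal E_L\ge\lambda-|C|-2e(L)$. This is impossible, because $\lambda\ge\hat\lambda\to\infty$.
\end{itemize}

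Completed this way, your lower bound is a genuinely different and shorter route than the paper's. The paper does not use the identity for this step at all. It takes the set $A_1$ of vertices of $S_0$ with $d_A\ge\binom{3k-3}{2}+1$ and derives $\lambda^2-|A_1|\lambda\le m+O_k(1)$ from the second-order eigen-equation at $u^\ast$. It then concludes $|A_1|\ge 2(k-1)$ with $A_1\subseteq C$.
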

\begin{proof} We first prove \(|C|\ge2(k-1)\). Let $N_0=\binom{3k-3}{2}+1$
and $A_1=\{v\in S_0:d_A(v)\ge N_0\}$. Put \(A_0=A\setminus A_1\). Since \(S_0\)
covers \(E(G[A])\), every edge of \(G[A_0]\) has an endpoint in
\(S_0\setminus A_1\). Because $d_A(v)\leq N_0-1$ for every $v\in S_0\setminus A_1$ and \(|S_0\setminus A_1|\leq|S_0|\le3(k-1)\), $e(G[A_0])\leq(N_0-1)|S_0\setminus A_1|=O_k(1)$.

By $A^2(G)\mathbf{x}=\lambda^2\mathbf{x}$ and $x_{u^\ast}=1$, we derive
\begin{align}\label{lambda2}
\lambda^2
=|A|+\sum_{v\in A}d_A(v)x_v
+\sum_{u\in B}d_A(u)x_u.
\end{align}
For \(v\in A_1\), we have $\lambda x_v\geq\sum_{w\in N_A(v)}x_w+1$. Then $\sum_{w\in N_A(v)}x_w\le\lambda x_v-1\leq\lambda-1$.
Also, by $|A_1|\leq|S_0|\leq3(k-1)$, we deduce
\begin{align*}
\sum_{v\in A_1}d_A(v)x_v&\le\sum_{v\in A_1}d_A(v)\leq e(A)+e(A_1)\leq e(A)+{|A_1|\choose2}
\le e(A)+O_k(1).
\end{align*}
Therefore,
\begin{align*}
\sum_{v\in A}d_A(v)x_v
&=\sum_{v\in A_1}d_A(v)x_v+\sum_{v\in A_0}d_A(v)x_v\\
&\le \sum_{v\in A_1}\left(
d_A(v)x_v+\sum_{w\in N_A(v)}x_w
\right)+2e(G[A_0])\\
&\leq e(A)+O_k(1)+(\lambda-1)|A_1|+2e(G[A_0])\\
&=e(A)+(\lambda-1)|A_1|+O_k(1).
\end{align*}
Note that $\sum_{u\in B}d_A(u)x_u\le e(A,B)$. Hence,
$
\lambda^2\leq|A|+e(A)+(\lambda-1)|A_1|+O_k(1)+e(A,B).
$
By $m=|A|+e(A)+e(A,B)+e(B)$, we obtain
\begin{align}\label{boundlambda2}
\lambda^2-|A_1|\lambda\le m+O_k(1).
\end{align}
Recall that $\mathfrak{D}_k(G)\leq\delta_{\ell}$. It follows that
\(
\lambda^2-2(k-1)\lambda
\ge m-(k-1)(2k-1)-\delta_\ell.
\)
If \(|A_1|\le2k-3\), then the above inequality implies
\(
\lambda^2-|A_1|\lambda
\ge m-(k-1)(2k-1)-\delta_\ell+\lambda,
\)
a contradiction with inequality \eqref{boundlambda2} as \(\lambda\to\infty\). Hence \(|A_1|\ge2(k-1)\).
Since \(N_0>2(k-1)\), we have \(A_1\subseteq C\). Therefore \(|C|\ge2(k-1)\).

If \(c\in C\setminus T\), then \(c\notin S_0\). Recall that
\(S_0\) covers \(G[A]\). It follows that \(N_A(c)\subseteq S_0\). Because
\(d_A(c)>2(k-1)\), by the pigeonhole principle, there is a $T_i$ ($1\leq i\leq k-1$) such that \(c\) is adjacent to all three vertices $t_1,t_2,t_3$ of \(T_i\). If there is another
vertex \(c'\in C\setminus T\) such that \(c'\) is adjacent to all three vertices of \(T_i\), then $ct_1t_2c$, $u^\ast c't_3u^\ast$ and $\{T_j: j\in[k-1]\setminus \{i\}\}$ are \(k\) pairwise vertex-disjoint triangles. Thus for every $T_i$ ($1\leq i\leq k-1$), there is at most one vertex in $C\setminus T$ which is adjacent to all three vertices of \(T_i\). So \(|C\setminus T|\le k-1\). Since $|C\cap T|\leq|T|=3(k-1)$, we have \(|C|\le4(k-1)\).

By $m=|A|+e(A)+e(A,B)+e(B)$, $\lambda=\sum_{v\in A}x_v$ and equation \eqref{lambda2}, we have
\begin{align*}
\mathfrak D_k(G)
&=e(A)+e(B)-(k-1)(2k-1)
+\sum_{v\in A}(2(k-1)-d_A(v))x_v
+\sum_{u\in B}d_A(u)(1-x_u)\\
&=\sum_{v\in C}d_A(v)-e(C)+e(L)+e(B)-(k-1)(2k-1)+\sum_{v\in C}(2(k-1)-d_A(v))x_v\\
&\quad +\sum_{v\in L}(2(k-1)-d_A(v))x_v +\sum_{u\in B}d_A(u)(1-x_u)\\
&=\sum_{v\in C}(d_A(v)-2(k-1))(1-x_v)+2(k-1)|C|-e(C)+e(L)+e(B) \\
&\quad-(k-1)(2k-1)+\mathcal{E}_L+\mathcal{E}_B\\
&=2(k-1)|C|-e(C)+e(L)+e(B)-(k-1)(2k-1)+\mathcal{E}_C+\mathcal{E}_L+\mathcal{E}_B\\
&=\binom{|C|}{2}-\binom{q}{2}-e(C)+e(L)+e(B)+\mathcal{E}_C+\mathcal{E}_L+\mathcal{E}_B,
\end{align*}
as desired. By the definition of $C, L$ and $0<x_v\leq1$ for every $v\in V(G)$, we have $\mathcal{E}_C,\mathcal{E}_L,\mathcal{E}_B\geq0$. 
\end{proof}

By Lemma \ref{exact-defect-identity}, we have \(2(k-1)\leq |C|\leq4(k-1)\). Let $|C|=2(k-1)+q$ where $q=|C|-2(k-1)\in\{0,\ldots,2(k-1)\}$. Write $D_q=\binom q2+\delta_\ell$. Recall that $\delta_\ell<2k-3$. Then $D_q=O_k(1)$.

\begin{lemma}
\label{weighted-neighborhood-class}
\begin{enumerate}[label=\textup{(\roman*)}]
\item $e(L),\ e(B),\ \mathcal E_B\leq D_q$ and $|Q|\leq2D_q$.
\item $\sum_{\substack{z\in L\\d_A(z)<2(k-1)}}x_z\leq D_q$.
\item Let $Z_0=\{z\in L\setminus Q:d_A(z)=2(k-1)\}$. Then $\sum_{z\in Z_0}x_z\geq\lambda-|C|-3D_q$. Moreover, 
    there is a set \(W\subseteq C\) of order \(2(k-1)\) such that $\sum_{z\in Z}x_z\geq c_k\lambda$, $|Z|\geq c_k\lambda$ and $x_w\geq c_k$ for every $w\in W$, where $Z=\{z\in Z_0:N_A(z)=W\}$ and \(c_k>0\) depends only on \(k\).
\end{enumerate}
\end{lemma}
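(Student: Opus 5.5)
The whole lemma is read off the exact defect identity of Lemma~\ref{exact-defect-identity} together with \eqref{bound DkG}. Since $\mathfrak D_k(G)\le\delta_\ell$, we get
\[
\binom{|C|}{2}-e(C)+e(L)+e(B)+\mathcal E_C+\mathcal E_L+\mathcal E_B\le \binom q2+\delta_\ell=D_q .
\]
Every term on the left is nonnegative, since $e(C)\le\binom{|C|}2$ and the three $\mathcal E$'s are nonnegative. Hence each term is at most $D_q$.

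\emph{Part (i).} This gives $e(L)\le D_q$, $e(B)\le D_q$ and $\mathcal E_B\le D_q$ immediately. Each edge of $G[L]$ contributes at most two endpoints to $Q$, so $|Q|\le 2e(L)\le 2D_q$.

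\emph{Part (ii).} If $z\in L$ has $d_A(z)<2(k-1)$, then its summand in $\mathcal E_L$ is $(2k-2-d_A(z))x_z\ge x_z$. Summing over such $z$ gives $\sum x_z\le\mathcal E_L\le D_q$.

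\emph{Part (iii), first claim.} Start from $\lambda=\sum_{v\in A}x_v$ and split $A=C\sqcup L$. The part over $C$ is at most $|C|$, since $x_v\le1$. Split $L$ into three pieces:
\begin{itemize}
\item vertices with $d_A<2(k-1)$, which contribute at most $D_q$ by (ii);
\item vertices of $Q$, which contribute at most $|Q|\le 2D_q$;
\item the set $Z_0$ of vertices of $L\setminus Q$ with $d_A=2(k-1)$.
\end{itemize}
Vertices of $L$ have $d_A\le 2(k-1)$ by the definition of $C$, so these three pieces cover $L$. Therefore $\sum_{z\in Z_0}x_z\ge\lambda-|C|-3D_q$.

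\emph{Part (iii), the set $W$.} A vertex $z\in Z_0$ lies in $L\setminus Q$, so it has no neighbours in $L$. Thus $N_A(z)\subseteq C$ is a $2(k-1)$-subset of $C$. Since $|C|\le4(k-1)$, there are at most $\binom{4k-4}{2k-2}$ possible neighbourhoods. By pigeonhole, some $W$ with $|W|=2(k-1)$ has $\sum_{z\in Z}x_z\ge(\lambda-|C|-3D_q)/\binom{4k-4}{2k-2}$. This is $\ge c_k\lambda$ for large $m$, because $|C|$ and $D_q$ are $O_k(1)$ and $\lambda\ge\hat\lambda\to\infty$. Since $x_z\le1$, it follows that $|Z|\ge\sum_{z\in Z}x_z\ge c_k\lambda$.

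\emph{Part (iii), the bound on $x_w$.} This is the step needing an extra idea. For $w\in W$, the eigen-equation gives $\lambda x_w\ge\sum_{z\in Z}x_z\ge c_k\lambda$, since every $z\in Z$ is adjacent to $w$. Hence $x_w\ge c_k$. We shrink $c_k$ if necessary so that one constant serves all three claims.

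The main care needed is bookkeeping: checking that the three pieces of $L$ exhaust it, and that all constants depend only on $k$. This uses $q\le 2(k-1)$ and $\delta_\ell<2k-3$.
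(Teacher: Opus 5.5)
Your proposal is correct and follows the paper's proof essentially step for step. Both arguments bound each nonnegative term of the defect identity by $D_q$, cover $L$ by the same three pieces to get $\sum_{z\in Z_0}x_z\ge\lambda-|C|-3D_q$, apply weighted pigeonhole over the at most $\binom{4(k-1)}{2(k-1)}$ possible neighbourhoods in $C$, and use the eigen-equation at $w\in W$ to obtain $x_w\ge c_k$.
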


\begin{proof}
By Lemma~\ref{exact-defect-identity}, we have $e(L),\ e(B),\ \mathcal E_B,\ \mathcal E_L\leq{\mathfrak{D}_k(G)+\binom q2\leq D_q}$. So $|Q|\leq2e(L)\leq2D_q$. For a vertex
\(z\in L\) with \(d_A(z)<2(k-1)\), it follows that
\(x_z\leq(2(k-1)-d_A(z))x_z\). Then
$$
\sum_{\substack{z\in L\\d_A(z)<2(k-1)}}x_z\leq\sum_{\substack{z\in L\\d_A(z)<2(k-1)}}(2(k-1)-d_A(z))x_z\leq\mathcal E_L\leq D_q.
$$
Since $x_u\leq1$ for every vertex $u\in V(G)$, we have
\begin{align*}
\lambda=\sum_{v\in A}x_v&=\sum_{v\in C}x_v+\sum_{v\in Q}x_v+\sum_{v\in L\setminus Q}x_v \\
&\leq|C|+|Q|+\sum_{z\in Z_0}x_z+\sum_{\substack{z\in L\setminus Q\\d_A(z)<2(k-1)}}x_z
\leq|C|+3D_q+\sum_{z\in Z_0}x_z.
\end{align*}
So $\sum_{z\in Z_0}x_z\ge\lambda-|C|-3D_q$.

By $D_q=O_k(1)$ and $|C|\le4(k-1)$, there exists a constant \(c_k'>0\), depending only on \(k\), such that $\sum_{z\in Z_0}x_z\ge \lambda-c_k'$. Since $m$ is sufficiently large, we have \(\lambda\ge 2c_k'\). Then $\sum_{z\in Z_0}x_z>0$, that is, $Z_0$ is not empty. For \(z\in Z_0\), by definition, $N_A(z)\subseteq C$. Since \(|C|\le4(k-1)\), the number of possible neighborhoods \(N_A(z)\) is
at most $\binom{4(k-1)}{2(k-1)}$. For each \(2(k-1)\)-subset \(S\subseteq C\), define $Z(S)=\{z\in Z_0: N_A(z)=S\}$.
All nonempty sets \(Z(S)\) form a partition of \(Z_0\).
Consequently, by the weighted pigeonhole principle, there exists a \(2(k-1)\)-subset \(W\subseteq C\) such that
$$
\sum_{z\in Z(W)}x_z
\ge \frac{\lambda-c_k'}{\binom{4(k-1)}{2(k-1)}}\ge \frac{\lambda}{2\binom{4(k-1)}{2(k-1)}}.
$$
Let $Z=Z(W)$ and $c_k=\frac{1}{2\binom{4(k-1)}{2(k-1)}}>0$. Then $\sum_{z\in Z}x_z\ge c_k\lambda$.
Since \(x_z\leq1\) for every $z\in Z$, we have \(|Z|\geq\sum_{z\in Z}x_z\geq c_k\lambda\). For each \(w\in W\), we derive
$
\lambda x_w=\sum_{v\in N_G(w)}x_v\geq\sum_{z\in Z}x_z\geq c_k\lambda.
$
Thus $x_w\geq c_k$. We complete the proof.
\end{proof}

\begin{lemma}\label{Bbound}
It holds that \(|B|=O_k(1)\). 
\end{lemma}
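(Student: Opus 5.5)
The plan is to split $B$ according to the Perron weight and bound each part separately, using the bounds $e(B)\le D_q$ and $\mathcal E_B\le D_q$ from Lemma~\ref{weighted-neighborhood-class}(i) together with $D_q=O_k(1)$. The vertices of $B$ fall into three classes:
\[
B_0=\{v\in B: d_A(v)=0\},\qquad
B_1=\{v\in B: d_A(v)\ge1,\ x_v\le 1/2\},\qquad
B_2=\{v\in B: d_A(v)\ge 1,\ x_v>1/2\}.
\]
Since $G$ has no isolated vertices, every $v\in B_0$ has a neighbour in $B$, so $|B_0|\le 2e(B)\le 2D_q$. Every $v\in B_1$ contributes at least $d_A(v)(1-x_v)\ge 1/2$ to $\mathcal E_B$, so $|B_1|\le 2\mathcal E_B\le 2D_q$. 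Everything therefore reduces to showing $|B_2|=O_k(1)$.

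For $v\in B_2$, the eigen-equation gives $d(v)\ge\sum_{w\in N(v)}x_w=\lambda x_v>\lambda/2$. Since $d_B(v)\le 2e(B)=O_k(1)$, this means $d_A(v)\ge\lambda/2-O_k(1)$. As $|C|\le 4(k-1)$ and $|Q|\le 2D_q$, all but $O_k(1)$ of these $A$-neighbours lie in $L\setminus Q$. Thus a heavy vertex of $B$ looks like a second copy of $u^\ast$: it sees $\Omega(\lambda)$ vertices of $L\setminus Q$.

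To get a contradiction when $B_2$ is large, I would first try a double count of Perron weight. For $z\in L\setminus Q$ the eigen-equation gives
\[
\lambda x_z=1+\sum_{w\in N_A(z)}x_w+\sum_{w\in N_B(z)}x_w\le 2k-1+d_B(z),
\]
so each such $z$ has $x_z\le (2k-1+d_B(z))/\lambda$. I would substitute this into $\lambda x_v=\sum_{w\in N(v)}x_w$ for $v\in B_2$ and sum over $B_2$. The terms $d_B(z)$ are controlled through $e(A,B)$, and $e(A,B)$ is bounded using $m=|A|+e(A)+e(A,B)+e(B)$ together with $\mathfrak D_k(G)\le\delta_\ell$ and the identity \eqref{lambda2}. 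Since \eqref{lambda2} counts each $u\in B$ with weight $x_u$, a large heavy set $B_2$ should make $\lambda^2-2(k-1)\lambda$ exceed $m-(k-1)(2k-1)$ by more than $\delta_\ell$, or else force $\mathcal E_B$ past $D_q$.

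If this accounting turns out to be too lossy, the fallback is structural. Two heavy vertices $v,v'\in B_2$, and $u^\ast$ itself, each have $\Omega(\lambda)$ neighbours in $L\setminus Q$. Any such neighbour $z$ with a neighbour $w\in W$ yields a triangle $vzw$ once $vw\in E(G)$. The alternative is an edge-shifting move via Lemma~\ref{edge-shifting}: move the edges from $v$ onto $u^\ast$ (recall $x_{u^\ast}=1\ge x_w$), which keeps the graph $kK_3$-free because $S_0$ covers $G[A]$. Either way one contradicts $kK_3$-freeness or the maximality of $\lambda$ once $|B_2|$ exceeds a constant depending on $k$.

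I expect the main obstacle to be this last step, bounding $B_2$. The defect quantities $\mathcal E_B$ and $e(B)$ only see $1-x_v$, which can be tiny exactly for heavy vertices. So a genuinely different argument, either the weight double count or the triangle/edge-shifting argument, is needed to rule out more than $O_k(1)$ vertices of $B$ carrying Perron weight close to $1$.
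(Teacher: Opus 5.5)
Your handling of the light part of $B$ is fine and matches the paper: vertices with a neighbour in $B$ are paid for by $e(B)$, and vertices with $x_v\le 1/2$ pay at least $1/2$ each in $\mathcal E_B$. The gap is the heavy class. It comes from your remark that $\mathcal E_B$ only sees $1-x_v$. In fact it sees $d_A(v)(1-x_v)$, and a heavy vertex has $d_A(v)\ge\lambda x_v>\lambda/2$.

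Let $B_2'$ be the heavy vertices with no neighbour in $B$; this is all but at most $2D_q$ of $B_2$. For $v\in B_2'$ we have $\lambda x_v=\sum_{u\in N_A(v)}x_u$ and $\lambda=\sum_{u\in A}x_u$, hence
\[
X_v:=\sum_{u\in A\setminus N_A(v)}x_u=\lambda(1-x_v)\le 2d_A(v)(1-x_v),\qquad\text{so}\qquad \sum_{v\in B_2'}X_v\le 2\mathcal E_B\le 2D_q .
\]
So every heavy vertex misses only $O_k(1)$ of the Perron weight of $A$. Your triangle step needs exactly this, not ``$\Omega(\lambda)$ neighbours in $L\setminus Q$'': you must guarantee $vw\in E(G)$ for $w\in W$, and nothing in your sketch does.

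Without this bound the triangle route cannot work. For example, a vertex $v\in B$ with $N(v)=L\setminus Q$ lies in no triangle, since that set is independent, so $kK_3$-freeness says nothing about how many such vertices exist. Such a vertex has $1-x_v=O_k(\lambda^{-1})$, and only the cost $X_v\ge\sum_{w\in W}x_w\ge 2(k-1)c_k$ limits their number.

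With the bound, the paper closes the argument as follows:
\begin{itemize}
\item Since $x_w\ge c_k$ on $W$, all but $2D_q/c_k$ vertices of $B_2'$ are complete to $W$.
\item Since $\sum_{z\in Z}x_z\ge c_k\lambda$, none of them has its $Z$-neighbourhood inside a set of $O_k(1)$ vertices.
\item You still need a packing argument, because $G[W]$ is not yet known to be a clique. Let $s=\nu(G[W])\le k-2$. Take a matching of size $k-1-s$ between these vertices and $Z$, closing each edge with a distinct vertex of $W$ outside a maximum matching of $G[W]$. Add $s$ triangles on that maximum matching with apexes in $Z$, and one triangle $u^\ast w_0h$ with $w_0\in W$, $h\in Z$. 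This is a $kK_3$.
\item Hence K\"onig's theorem gives a vertex cover of size at most $k-2-s$, so there are at most $k-2-s$ such vertices.
\item If $s=k-1$, a single edge between them and $Z$ already yields $kK_3$, forcing $X_v\ge c_k\lambda$, a contradiction.
\end{itemize}

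Neither of your fallbacks supplies the missing input.

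The double count, as sketched, is blind to exactly the vertices you must count. A vertex of $B$ complete to $A$ with no neighbour in $B$ has $x_v=1$ and contributes nothing to any nonnegative term of the identity in Lemma~\ref{exact-defect-identity}. So bookkeeping via $\mathfrak D_k(G)\le\delta_\ell$ and \eqref{lambda2} gets no grip on them; they are excluded only by exhibiting $k$ disjoint triangles. Controlling $e(A,B)$ is also circular, since each heavy vertex contributes more than $\lambda/2-O_k(1)$ to it.

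The edge shift is invalid. You cannot rotate an edge $vy$ with $y\in A$ onto $u^\ast$ at the $A$-end, because $u^\ast$ is already adjacent to all of $A$. Rotating $vy$ to $vu^\ast$ instead creates triangles $u^\ast vy'$ for every $y'\in N_A(v)$. Now $G-u^\ast$ contains $T_1,\dots,T_{k-1}$ and $v$ has $\Omega(\lambda)$ neighbours in $A$, so for $v\notin T$ you can take $y'\notin T$ and obtain $kK_3$. The fact that $S_0$ covers $G[A]$ is irrelevant to edges at $v\in B$. The earlier use of this shift relied on $G-u^\ast$ being $(k-1)K_3$-free, which is exactly what no longer holds.
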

\begin{proof}
Choose \(W,Z,c_k\) as in Lemma~\ref{weighted-neighborhood-class}.
Let \(B_1\) be the set of endpoints of the edges of \(G[B]\), and
put \(B_0=B\setminus B_1\). By Lemma~\ref{weighted-neighborhood-class},
\(|B_1|\leq2D_q\). For \(w\in B_0\), define
$$
X_w=\sum_{u\in A\setminus N_A(w)}x_u.
$$
The vertex \(w\) has no neighbor in \(B\), so by $\lambda x_w=\sum_{u\in N_A(w)}x_u$ and $\lambda=\lambda x_{u^\ast}=\sum_{u\in A}x_u$,
we have $X_w=\lambda(1-x_w)$.
Let $B_- =\{w\in B_0:X_w<\lambda/2\}$ and $B_+ =B_0\setminus B_-$. For \(w\in B_-\), we have \(x_w>1/2\). Then
\(d_A(w)\geq\sum_{u\in N_A(w)}x_u=\lambda x_w>\lambda/2\). Moreover,
\(x_w\leq1/2\) for \(w\in B_+\). Hence, by Lemmas \ref{exact-defect-identity} and \ref{weighted-neighborhood-class}, we have
\begin{equation}\label{eq:outer-weighted-bounds}
 \begin{aligned}
 \sum_{w\in B_-}X_w
 &\leq2\sum_{w\in B_-}d_A(w)(1-x_w)\leq2\mathcal E_B\leq2D_q,\\
 \sum_{w\in B_+}d_A(w)
 &\leq2\sum_{w\in B_+}d_A(w)(1-x_w)\leq2\mathcal E_B\leq2D_q.
 \end{aligned}
\end{equation}
Since \(G\) has no
isolated vertices and vertices of \(B_0\) have no neighbors in $B$,
it follows that \(d_A(w)\ge1\) for every $w\in B_0$. Therefore \(|B_+|\leq2D_q\).

Let $B'=\{w\in B_-:W\nsubseteq N_A(w)\}$, and $B''=B_-\setminus B'$. For every \(w\in B'\), there is a vertex \(u\in W\) such that $u\notin N_A(w)$, so
\(X_w\geq x_u\geq c_k\). By inequality \eqref{eq:outer-weighted-bounds}, we have
$$
c_k|B'|
\le \sum_{w\in B'}X_w
\le \sum_{w\in B_-}X_w
\le 2D_q.
$$
Hence \(|B'|\leq2D_q/c_k=O_k(1)\).

It remains to bound \(B''\). For every $w\in B''$, it follows that $W\subseteq N_A(w)$.
Put \(s=\nu(G[W])\leq k-1\), and consider the bipartite graph
\(\mathcal H=G[B'',Z]\).

Suppose first that \(s\leq k-2\). We claim that $\nu(\mathcal H)\leq k-s-2$. Otherwise, take a matching \(\{b_iz_i:b_i\in B'', z_i\in Z,1\leq i\leq k-1-s\}\) in
\(\mathcal H\), and a maximum matching $\{a_ja_j': 1\leq j\leq s\}$ of size \(s\) in \(G[W]\). Since $|W|=2(k-1)$, we have $|W\setminus\{a_1,a'_1,\ldots,a_s,a'_s\}|=2(k-1-s)$. Choose distinct vertices
\(w_1,\ldots,w_{k-1-s}\) from $W\setminus\{a_1,a'_1,\ldots,a_s,a'_s\}$. Recall that $W=N_A(z)$ for every $z\in Z$ and $W\subseteq N_A(w)$ for every $w\in B''$. Then $b_iz_iw_ib_i$ is a $K_3$ for $1\le i\le k-1-s$. Since $|Z|\geq c_k\lambda$ and $m$ is sufficiently large, we can choose \(s+1\) vertices $h_1,\ldots,h_s,h_{s+1}$ of \(Z\) different from $z_i$, \(1\leq i\leq k-1-s\). Then $h_pa_pa'_ph_p$ is a $K_3$ for $1\leq p\leq s$. Since $|W\setminus\{a_1,a'_1,\ldots,a_s,a'_s,w_1,\ldots,w_{k-1-s}\}|=k-1-s\geq1$, choose a vertex $w_0\in W\setminus\{a_1,a'_1,\ldots,a_s,a'_s,w_1,\ldots,w_{k-1-s}\}$. Then $w_0u^\ast h_{s+1}w_0$ is a $K_3$. Hence, $\{b_iz_iw_ib_i:1\le i\le k-1-s\}$, $\{h_pa_pa'_ph_p:1\leq p\leq s\}$ and $w_0u^\ast h_{s+1}w_0$ are \(k\) pairwise vertex-disjoint
triangles of $G$, a contradiction. Hence, $\nu(\mathcal H)\leq k-s-2$. By Theorem \ref{Konig}, \(\mathcal H\) has a vertex cover
\(S_B\sqcup S_Z\) with \(S_B\subseteq B''\), \(S_Z\subseteq Z\), and
\(|S_B|+|S_Z|=\nu(\mathcal H)\leq k-s-2\). If there is a vertex \(w\in B''\setminus S_B\), then
\(N_Z(w)\subseteq S_Z\), and consequently
\begin{equation*}\label{eq:outer-cover-contradiction}
 X_w\geq\sum_{z\in Z\setminus S_Z}x_z
      \geq c_k\lambda-|S_Z|>2D_q
\end{equation*}
for sufficiently large \(m\). This contradicts
\eqref{eq:outer-weighted-bounds}. Hence \(B''\subseteq S_B\) and so
\(|B''|\leq k-2-s\).

Suppose now that \(s=k-1\), so \(G[W]\) has a perfect matching $\{w_iw'_i:1\leq i\leq k-1\}$.
There is no edge between \(B''\) and \(Z\) in $G$. Otherwise, choose an edge $wz$ where $w\in B''$ and $z\in Z$. By Lemma \ref{weighted-neighborhood-class} (iii), we have $|Z|\geq c_k\lambda$. Since $m$ is sufficiently large, there are $k-1$ vertices $z_i\in Z\setminus\{z\}$ with $i\in [k-1]$. Note that $W=N_A(z')$ for every $z'\in Z$ and $W\subseteq N_A(w')$ for every $w'\in B''$. It follows that $wzw_1w$, $u^\ast w_1'z_1u^\ast$ and $\{w_iw'_iz_iw_i:2\leq i\leq k-1\}$ are \(k\) pairwise vertex-disjoint triangles of $G$, a contradiction. 
Thus, if $B''\neq\emptyset$, then we derive $Z\subseteq A\setminus N_A(w)$ for every \(w\in B''\). Hence \(X_w\geq \sum_{z\in Z}x_z\geq c_k\lambda>2D_q\), again contradicting
\eqref{eq:outer-weighted-bounds}. Therefore \(B''=\emptyset\).

Note that $B=B_1\sqcup B_+\sqcup B'\sqcup B''$. In both cases, we have \(|B|=O_k(1)\).
\end{proof}

\begin{lemma}\label{finite-kernel-reduction}
Let $V(G)=K\sqcup \mathcal I$ be a partition of $V(G)$ such that $\mathcal I$ is an independent set. Let \(N_1,\ldots,N_s\subseteq K\) be the distinct neighborhoods of the vertices of \(\mathcal I\) satisfying $\{N_1,\cdots, N_s\}=\{N_G(z):z\in \mathcal I\}$. For $1\leq i\leq s$, define $\mathcal I_i=\{z\in \mathcal I:N_K(z)=N_i\}$, $n_i=|\mathcal I_i|$, and $\mathbf{\chi}_i\in\{0,1\}^{K}$ the characteristic vector of \(N_i\). Let \(\mathbf{v}=\mathbf{x}|_K/\|\mathbf{x}|_K\|_2\), and set $M=\sum_{i=1}^s n_i\mathbf{\chi}_i\mathbf{\chi}_i^{\mathsf T}$.
Then
\begin{equation}\label{eq:reduced-eigenvalue}
 \lambda^2\mathbf{v}=\lambda A(G[K])\mathbf{v}+M\mathbf{v}.
\end{equation}
Define $\zeta=\operatorname{tr}M-\mathbf{v}^{\mathsf T}M\mathbf{v}$. Then
we have $\zeta\geq0$ and
\begin{equation}\label{eq:kernel-scalar-identity}
 \lambda^2=m-e(G[K])-\zeta+\lambda \mathbf{v}^{\mathsf T}A(G[K])\mathbf{v}.
\end{equation}
If $|K|=O_k(1)$, then $0\leq\zeta=O_k(\lambda)$.
\end{lemma}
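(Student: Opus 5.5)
We eliminate the independent part \(\mathcal I\) from the eigenvalue equations and then take a quadratic form in \(\mathbf{v}\). Since \(\mathcal I\) is independent, for every \(z\in\mathcal I_i\) the equation at \(z\) reads \(\lambda x_z=\mathbf{\chi}_i^{\mathsf T}\mathbf{x}|_K\). Hence all vertices of \(\mathcal I_i\) share the coordinate \(x_z=\mathbf{\chi}_i^{\mathsf T}\mathbf{x}|_K/\lambda\). For \(u\in K\), the eigenvalue equation gives
\[
\lambda \mathbf{x}|_K=A(G[K])\mathbf{x}|_K+\sum_{i=1}^s\mathbf{\chi}_i\sum_{z\in\mathcal I_i}x_z=A(G[K])\mathbf{x}|_K+\frac1\lambda\sum_{i=1}^s n_i\mathbf{\chi}_i\mathbf{\chi}_i^{\mathsf T}\mathbf{x}|_K .
\]
Multiplying by \(\lambda\) and dividing by \(\|\mathbf{x}|_K\|_2\) yields \eqref{eq:reduced-eigenvalue}. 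Here \(\mathbf{x}|_K\neq0\): since \(G\) is connected without isolated vertices, every vertex of \(\mathcal I\) has a neighbour, and that neighbour lies in \(K\).

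Next, left-multiply \eqref{eq:reduced-eigenvalue} by \(\mathbf{v}^{\mathsf T}\) and use \(\mathbf{v}^{\mathsf T}\mathbf{v}=1\):
\[
\lambda^2=\lambda\mathbf{v}^{\mathsf T}A(G[K])\mathbf{v}+\mathbf{v}^{\mathsf T}M\mathbf{v}.
\]
We have \(\operatorname{tr}M=\sum_i n_i|N_i|=e(K,\mathcal I)\). Because \(\mathcal I\) is independent, \(m=e(G[K])+e(K,\mathcal I)\), so \(\operatorname{tr}M=m-e(G[K])\). Substituting \(\mathbf{v}^{\mathsf T}M\mathbf{v}=\operatorname{tr}M-\zeta\) gives \eqref{eq:kernel-scalar-identity}.

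For \(\zeta\ge0\), note that \(M\) is positive semidefinite as a nonnegative combination of rank-one projections \(\mathbf{\chi}_i\mathbf{\chi}_i^{\mathsf T}\). Its eigenvalues are therefore nonnegative, so \(\mathbf{v}^{\mathsf T}M\mathbf{v}\le\lambda_{\max}(M)\le\operatorname{tr}M\) for the unit vector \(\mathbf{v}\).

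The only nontrivial part is the bound \(\zeta=O_k(\lambda)\) when \(|K|=O_k(1)\). Comparing the two expressions, \(\zeta=\operatorname{tr}M-\lambda^2+\lambda\mathbf{v}^{\mathsf T}A(G[K])\mathbf{v}\). We bound the last term by \(\lambda\|A(G[K])\|_2\le\lambda|K|=O_k(\lambda)\). So it suffices to show \(\operatorname{tr}M-\lambda^2\le O_k(\lambda)\), that is, \(\lambda^2\ge m-e(G[K])-O_k(\lambda)\).

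We have \(e(G[K])\le\binom{|K|}2=O_k(1)\). Recall also \(\lambda\ge\hat\lambda\), and by \eqref{hatlambda}, \(\hat\lambda^2=m+2(k-1)\sqrt m+O_k(1)\ge m-O_k(1)\). Hence \(\operatorname{tr}M-\lambda^2\le O_k(1)\), and therefore \(\zeta\le O_k(1)+\lambda|K|=O_k(\lambda)\). This step is the main obstacle, since it needs the extremality lower bound \(\lambda\ge\hat\lambda\) rather than just the algebraic identity.
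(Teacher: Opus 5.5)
Your proposal is correct and follows essentially the same route as the paper: you eliminate the twin classes $\mathcal I_i$ via $\lambda x_z=\chi_i^{\mathsf T}\mathbf{x}|_K$, take the quadratic form in $\mathbf v$ with $\operatorname{tr}M=e(K,\mathcal I)=m-e(G[K])$, and bound $\zeta=O_k(\lambda)$ using extremality ($\lambda\ge\hat\lambda$, which is exactly what the paper's bound $\mathfrak D_k(G)\le\delta_\ell$ encodes) together with $\mathbf v^{\mathsf T}A(G[K])\mathbf v\le |K|-1$. The differences are cosmetic: you get $\zeta\ge0$ from $M\succeq 0$ (so $\mathbf v^{\mathsf T}M\mathbf v\le\lambda_{\max}(M)\le\operatorname{tr}M$) rather than from termwise Cauchy--Schwarz in $\zeta=\sum_i n_i\bigl(|N_i|-(\chi_i^{\mathsf T}\mathbf v)^2\bigr)$, and the matrices $\chi_i\chi_i^{\mathsf T}$ are rank-one positive semidefinite rather than projections.
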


\begin{proof}
Since $N_G(u)=N_i$ for any vertex $u\in \mathcal I_i$ ($1\leq i\leq s$), the vector $\mathbf{x}$ has the same coordinates corresponding to the vertices of $\mathcal I_i$. Let $\eta_i=\frac{x_u}{\|\mathbf{x}|_K\|_2}$ for $u\in\mathcal I_i$. So $\lambda\eta_i=\chi_i^{\mathsf T}\mathbf{v}$. By $A(G)\mathbf{x}=\lambda\mathbf{x}$, we have
\begin{align*}
\lambda \mathbf{v}&
=A(G[K])\mathbf{v}+\sum_i n_i\frac{\chi_i^{\mathsf T}\mathbf{v}}{\lambda}\mathbf{\chi}_i
=A(G[K])\mathbf{v}+\frac{1}{\lambda}\sum_{i=1}^{s}n_i\chi_i\chi_i^\mathsf{T}\mathbf{v}
 =A(G[K])\mathbf{v}+\frac{1}{\lambda}M\mathbf{v}.
\end{align*}
Hence equation (\ref{eq:reduced-eigenvalue}) holds.

For \(x,y\in K\), $M_{xy}
 =\sum_{i=1}^{s}n_i(\chi_i)_{x}(\chi_i)_{y}
 =|N_{\mathcal I}(x)\cap N_{\mathcal I}(y)|$. Thus, when $x\neq y$, the entry $M_{xy}$ counts the common neighbors in ${\mathcal I}$ of $x$
and $y$, and $M_{xx}=d_{\mathcal I}(x)$.
Equivalently, \(M=\mathcal R\mathcal R^{\mathsf T}\), where \(\mathcal R\) is the bipartite adjacency matrix between \(K\) and \({\mathcal I}\), that is, $\mathcal R_{ii'}=1$ if the vertex $v_i\in K$ is adjacent to the vertex $u_{i'}\in {\mathcal I}$; and $\mathcal R_{ii'}=0$ otherwise.
Thus \(\operatorname{tr}M=e(K,{\mathcal I})=m-e(G[K])\).
Left-multiplying both sides by $\mathbf{v}^\mathsf{T}$ for equation (\ref{eq:reduced-eigenvalue}) and by $\|\mathbf{v}\|_2=1$, we obtain
\[
 \lambda^2=\lambda \mathbf{v}^{\mathsf T}A(G[K])\mathbf{v}+\mathbf{v}^{\mathsf T}M\mathbf{v}.
\]
Since $\zeta=\operatorname{tr}M-\mathbf{v}^{\mathsf T}M\mathbf{v}$, it follows that
$$
\lambda^2
 =\operatorname{tr}M-\zeta+\lambda \mathbf{v}^{\mathsf T}A(G[K])\mathbf{v}=m-e(G[K])-\zeta+\lambda \mathbf{v}^{\mathsf T}A(G[K])\mathbf{v}.
$$
Thus equation \eqref{eq:kernel-scalar-identity} holds. By \(\operatorname{tr}M=e(K,{\mathcal I})\) and $M=\sum_{i=1}^s n_i\mathbf{\chi}_i\mathbf{\chi}_i^{\mathsf T}$,
we have
\begin{align}\label{eq:4.010}
 \zeta=\sum_{i=1}^s n_i\bigl(|N_i|-(\mathbf{\chi}_i^{\mathsf T}\mathbf{v})^2\bigr).
\end{align}
Because
\((\mathbf{\chi}_i^{\mathsf T}\mathbf{v})^2\leq\|\mathbf{\chi}_i\|_2^2\|\mathbf{\mathbf{v}}\|_2^2=|N_i|\) for $1\leq i\leq s$, we derive $\zeta\geq0$.

Finally, if $|K|=O_k(1)$, since $\mathbf{v}^{\mathsf T}A(G[K])\mathbf{v}\leq\lambda(G[K])\leq |K|-1=O_k(1)$ and $\mathfrak D_k(G)=m-(2k-1)(k-1)+2(k-1)\lambda-\lambda^2\leq\delta_\ell$, by equation \eqref{eq:kernel-scalar-identity}, we get
\begin{align}\label{eq:4.011}
 0\leq\zeta
 =m-e(G[K])+\lambda \mathbf{v}^{\mathsf T}A(G[K])\mathbf{v}-\lambda^2
 = O_k(\lambda),
\end{align}
as required.
\end{proof}

Note that $L\setminus Q$ is independent. In the following, we may set $K=\{u^\ast\}\sqcup C\sqcup B\sqcup Q$ and $\mathcal I=L\setminus Q$. By Lemmas~\ref{exact-defect-identity}, \ref{weighted-neighborhood-class} and~\ref{Bbound}, we have \(|K|=O_k(1)\). 
\begin{proposition}
\label{linear-neighborhood-type}
$
|\mathcal I|=\Omega_k(m)$ and $\bigl|\{z\in \mathcal I:d_A(z)<2(k-1)\}\bigr|
\leq D_q\lambda=O_k(\lambda)$. For every $1\leq i\leq s$ with
\(n_i=\Omega_k(m)\), denote by $\mathbf{w}_i=\frac{\mathbf{\chi}_i}{\sqrt{|N_i|}}$. Then $\|\mathbf{v}-\mathbf{w}_i\|_2=O_k(\lambda^{-1/2})$.
Moreover, there is a $j\in [s]$ with \(n_j=\Omega_k(m)\) satisfying
\begin{equation}\label{eq:linear-type-form}
N_j=\{u^\ast\}\sqcup W_j,
\qquad
W_j\subseteq C,\quad |W_j|=2(k-1),\quad G[N_j]\cong K_{2k-1}.
\end{equation}
\end{proposition}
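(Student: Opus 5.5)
We argue in four steps: a size bound on $\mathcal I$, a count of low-degree vertices, an alignment estimate coming from $\zeta=O_k(\lambda)$ in Lemma~\ref{finite-kernel-reduction}, and finally an identification of the linear-size class with $Z$.

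\textbf{Step 1: $|\mathcal I|$ and the low-degree vertices.} By Lemma~\ref{weighted-neighborhood-class}(iii), $|\mathcal I|\ge|Z_0|\ge|Z|\ge c_k\lambda$. This alone is not linear in $m$, so we argue through edges instead. Since $|K|=O_k(1)$, we have $e(G[K])=O_k(1)$, so $e(K,\mathcal I)=m-O_k(1)$. Every $z\in\mathcal I$ has $N_G(z)\subseteq K$, hence $d(z)\le|K|=O_k(1)$, and therefore $|\mathcal I|\ge (m-O_k(1))/|K|=\Omega_k(m)$. For the low-degree vertices, Lemma~\ref{weighted-neighborhood-class}(ii) gives $\sum x_z\le D_q$ over $z\in L$ with $d_A(z)<2(k-1)$. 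Each such $z\in\mathcal I$ is adjacent to $u^\ast$, so $\lambda x_z\ge x_{u^\ast}=1$, i.e.\ $x_z\ge 1/\lambda$. Their number is therefore at most $D_q\lambda$.

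\textbf{Step 2: alignment of $\mathbf v$ with each $\mathbf w_i$.} Lemma~\ref{finite-kernel-reduction} gives $\zeta=\sum_i n_i\bigl(|N_i|-(\chi_i^{\mathsf T}\mathbf v)^2\bigr)=O_k(\lambda)$, and every summand is nonnegative. Fix $i$ with $n_i=\Omega_k(m)$. Then $|N_i|-(\chi_i^{\mathsf T}\mathbf v)^2=O_k(\lambda/m)=O_k(\lambda^{-1})$, since $\lambda^2\asymp m$. Dividing by $|N_i|$ gives $1-(\mathbf w_i^{\mathsf T}\mathbf v)^2=O_k(\lambda^{-1})$. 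Because $\mathbf v>0$, the inner product $\mathbf w_i^{\mathsf T}\mathbf v$ is positive, so $1-\mathbf w_i^{\mathsf T}\mathbf v=O_k(\lambda^{-1})$. Hence
\[
\|\mathbf v-\mathbf w_i\|_2^2=2(1-\mathbf w_i^{\mathsf T}\mathbf v)=O_k(\lambda^{-1}),
\]
which is the claimed estimate $\|\mathbf v-\mathbf w_i\|_2=O_k(\lambda^{-1/2})$.

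\textbf{Step 3: finding a linear class of the form \eqref{eq:linear-type-form}.} There are $O_k(1)$ classes and $|\mathcal I|=\Omega_k(m)$, while only $O_k(\lambda)=o(m)$ vertices of $\mathcal I$ have $d_A<2(k-1)$. So, by pigeonhole, some class $N_j$ with $n_j=\Omega_k(m)$ consists of vertices with $d_A=2(k-1)$. These vertices lie in $L\setminus Q$, so they belong to $Z_0$, and therefore $N_j=\{u^\ast\}\sqcup W_j\sqcup(N_j\cap B)$ with $W_j\subseteq C$ and $|W_j|=2(k-1)$.

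\textbf{Step 4: ruling out $B$-neighbours and proving $G[W_j]$ is complete.} This is the main obstacle.

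\emph{Removing $N_j\cap B$.} All linear classes share the same normalized vector up to $O_k(\lambda^{-1/2})$ by Step 2. The Perron vector restricted to $K$ is nearly $\mathbf w_j$, so $v_{u^\ast}\approx 1/\sqrt{|N_j|}$. A vertex $b\in N_j\cap B$ would then have $x_b$ comparable to $x_{u^\ast}=1$, so $x_b\ge 1/2$ and $b\in B_-$. Since $\mathcal E_B$ is bounded, the mass $X_b$ on $A\setminus N_A(b)$ is at most $2D_q$. The plan is to derive a contradiction from this, either by repeating the matching and triangle-packing arguments of Lemma~\ref{Bbound} with $Z$ replaced by $\mathcal I_j$, or more simply through the defect identity. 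Concretely, the $\Omega_k(m)$ vertices of $\mathcal I_j$ would contribute edges to $B$ beyond what the bound $e(A,B)$ in \eqref{lambda2} permits. If this becomes messy, a cleaner alternative is to choose the class $j$ whose $W_j=W$ from Lemma~\ref{weighted-neighborhood-class}, intersected with the pigeonhole argument.

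\emph{Completeness of $G[W_j]$.} Suppose $G[W_j]$ were missing an edge. Then $\binom{|C|}{2}-e(C)\ge1$ in the defect identity of Lemma~\ref{exact-defect-identity}, but this term alone is not decisive. Instead, I would use the near-equality $\mathbf v\approx\mathbf w_j$ in \eqref{eq:reduced-eigenvalue}. Restricted to $N_j$, all coordinates are nearly equal, so $\lambda\,\mathbf v^{\mathsf T}A(G[K])\mathbf v\approx \lambda\cdot 2e(G[N_j])/|N_j|$. Substituting into \eqref{eq:kernel-scalar-identity} gives
\[
\lambda^2\le m+\lambda\,\frac{2e(G[N_j])}{2k-1}+O_k(\sqrt{\lambda}),
\]
where the error term comes from the $O_k(\lambda^{-1/2})$ alignment. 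On the other hand, $\lambda\ge\hat\lambda$ forces $\lambda^2\ge m+2(k-1)\lambda-O_k(1)$. Comparing the two requires $e(G[N_j])\ge (k-1)(2k-1)=\binom{2k-1}{2}$, so $G[N_j]\cong K_{2k-1}$. I expect controlling the error term here to be the delicate point, since the coefficient gap is a linear term in $\lambda$ that must beat an $O_k(\sqrt\lambda)$ error. That comparison works because a missing edge costs about $2\lambda/(2k-1)$, which dominates $O_k(\sqrt\lambda)$ for large $m$.
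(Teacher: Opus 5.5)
Your Steps 1--3, and the spectral comparison at the end of Step 4, are correct and agree with the paper. The gap is the exclusion of $N_j\cap B$. You list several possible routes but carry out none of them, and none works as stated:
\begin{itemize}
\item The bound $X_b\le 2D_q$ presupposes $b\in B_0$, and it is not contradictory by itself. At this stage $q$ may be positive and $b$ may be adjacent to most of $A$.
\item The edges from $b$ to $\mathcal I_j$ are counted in \eqref{lambda2} with weight $x_b\approx 1$. In the defect identity they are penalized only through $\mathcal E_B$, by the factor $1-x_b$, which can be $O_k(m^{-1})$. So they exceed nothing that \eqref{lambda2} ``permits''.
\item The triangle-packing argument of Lemma~\ref{Bbound} empties only the vertices complete to $W$, and only when $G[W]$ has a perfect matching. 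If $b$ has no neighbour inside $N_j$, then $b$ lies in no triangle with a vertex of $\mathcal I_j$, so no packing contradiction exists at all.
\end{itemize}
The obstruction is a loss of edge density, not a forbidden configuration. Your completeness argument silently uses $|N_j|=2k-1$, so it inherits this gap.

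The missing idea is a matching-number bound. Suppose $G[N_j]$ contained $k$ disjoint edges $p_ip_i'$. Since $n_j=\Omega_k(m)$, you can pick $k$ distinct vertices $q_i\in\mathcal I_j$, and then the triangles $p_ip_i'q_i$ are disjoint. Hence $\nu(G[N_j])\le k-1$.

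Since $|N_j|\ge 2k-1$, the Erd\H{o}s--Gallai consequence \eqref{boundofnoperfectmatching} gives $e(G[N_j])/|N_j|\le k-1$. Equality holds only if $|N_j|=2k-1$ and $G[N_j]\cong K_{2k-1}$.

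Now run your own comparison with $|N_j|$ in place of $2k-1$. From $\mathbf w_j^{\mathsf T}A(G[K])\mathbf w_j=2e(G[N_j])/|N_j|$ and the $O_k(\lambda^{-1/2})$ alignment, you get $\lambda^2\le m+2\lambda\, e(G[N_j])/|N_j|+O_k(\lambda^{1/2})$. On the other side, $\mathfrak D_k(G)\le\delta_\ell$ gives $\lambda^2\ge m+2(k-1)\lambda-O_k(1)$.

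A strict density inequality means a deficit of at least $1/|N_j|\ge 1/|K|$. That costs $2\lambda/|K|$, which beats $O_k(\lambda^{1/2})$ for large $m$. So equality holds, which gives $N_j\cap B=\emptyset$ and $G[N_j]\cong K_{2k-1}$ at once. This is exactly the paper's argument.
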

\begin{proof}
Since \(\mathcal I\) is independent and every neighbor of a vertex
of \(\mathcal I\) belongs to \(K\), we have
\(
m=e(G[K])+\sum_{z\in \mathcal I}d_K(z)
\leq e(G[K])+|K|\,|\mathcal I|.
\)
By \(|K|=O_k(1)\) and
\(e(G[K])\leq\binom{|K|}{2}=O_k(1)\), we obtain $|\mathcal I|=\Omega_k(m)$.

Since every vertex \(z\in \mathcal I\subseteq A=N_G(u^\ast)\) is adjacent to
\(u^\ast\), we have
\(
\lambda x_z
=\sum_{u\in N_G(z)}x_u
\geq x_{u^\ast}=1,
\)
and therefore \(x_z\geq\lambda^{-1}\). Put $S=\{z\in \mathcal I:d_A(z)<2(k-1)\}$. By Lemma \ref{weighted-neighborhood-class} (ii),
\[
\frac{|S|}{\lambda}
\leq \sum_{z\in S}x_z
\leq D_q.
\]
Thus $|S|\leq D_q\lambda=O_k(\lambda)$.

Note that \(|K|=O_k(1)\). For any \(1\leq i\leq s\) with \(n_i=\Omega_k(m)\), by Lemma~\ref{finite-kernel-reduction} and \eqref{eq:4.010}, we have $O_k(\lambda)=\zeta\geq n_i\bigl(|N_i|-(\chi_i^{\mathsf T}\mathbf{v})^2\bigr)$. Since \(\lambda^2\leq\operatorname{tr}(A^2(G))=2m\), we have
\(n_i=\Omega_k(m)=\Omega_k(\lambda^2)\), and hence
\(
|N_i|-(\chi_i^{\mathsf T}\mathbf{v})^2=O_k(\lambda^{-1}).
\)
Since \(\mathbf{v}\) is a positive unit vector and \(\mathbf{w}_i\) is a nonnegative unit vector, we deduce $0\leq\mathbf{w}_i^{\mathsf T}\mathbf{v}\leq\|\mathbf{w}_i\|_2\|\mathbf{v}\|_2=1$. Thus
\[
\begin{aligned}
\|\mathbf{v}-\mathbf{w}_i\|_2^2
   =\|\mathbf{v}\|_2^2+\|\mathbf{w}_i\|_2^2-2\mathbf{w}_i^{\mathsf T}\mathbf{v}
   \leq2(1-(\mathbf{w}_i^{\mathsf T}\mathbf{v})^2)
   =\frac{2}{|N_i|}
  \bigl(|N_i|-(\chi_i^{\mathsf T}\mathbf{v})^2\bigr)
=O_k(\lambda^{-1}).
\end{aligned}
\]

By $\lambda^2\leq2m$, we derive $|S|=O_k(\sqrt m)=o(m)$.
Consequently, $|{\mathcal I}\setminus S|=\Omega_k(m)$. Note that $d_A(u)\leq2(k-1)$ for any vertex $u\in {\mathcal I}\subseteq L$. So $d_A(u)=2(k-1)$ for any vertex $u\in {\mathcal I}\setminus S$. 
Since $K$ has at most \(2^{|K|}=O_k(1)\) subsets and $N_G({\mathcal I}\setminus S)\subseteq K$, by the pigeonhole principle, there is a subset \(N\subseteq K\) such that there is at least
$\frac{|{\mathcal I}\setminus S|}{2^{|K|}}=\Omega_k(m)$ vertices in ${\mathcal I}\setminus S$ whose neighborhood is $N.$ As $\{z\in {\mathcal I}\setminus S:N_K(z)=N\}\subseteq\{z\in {\mathcal I}:N_K(z)=N\}$, we have $N=N_j$ for some $j\in[s]$, and so $n_j=\Omega_k(m)$. Choose \(z\in {\mathcal I}\setminus S\) with \(N_G(z)=N_j\).
Since \(z\in {\mathcal I}=L\setminus Q\), it has no neighbor in \(L\).
Thus all its \(2(k-1)\) neighbors in \(A\) belong to \(C\).
Also, \(z\) is adjacent to \(u^\ast\), and every remaining
neighbor of \(z\) belongs to \(B\).
Set $W_j=N_j\cap C$. 
We obtain $N_j=\{u^\ast\}\sqcup W_j\sqcup (N_j\cap B)$ and $|W_j|=2(k-1)$. Then \(|N_j|\geq2k-1\).

We observe that \(\nu(G[N_j])\leq k-1\).
Otherwise, choose a matching $\{p_ip'_i: 1\leq i\leq k\}$ in \(G[N_j]\). Since $n_j=\Omega_k(m)$, there are \(k\) distinct vertices $q_j$ with $j\in [k]$ in \(\mathcal I_j\setminus S\). Then $p_ip'_iq_ip_i$ where $i\in[k]$ are \(k\) pairwise vertex-disjoint triangles of $G$.
This contradicts that \(G\) is \(kK_3\)-free.

By inequality \eqref{boundofnoperfectmatching}, we obtain
\begin{equation}\label{eq:dominant-density}
\frac{e(G[N_j])}{|N_j|}\leq k-1,
\end{equation}
with equality only if \(|N_j|=2k-1\) and
\(G[N_j]\cong K_{2k-1}\). Suppose that inequality \eqref{eq:dominant-density} is strict. Then \((k-1)|N_j|-e(G[N_j])\geq1\). It follows that
\[
\frac{e(G[N_j])}{|N_j|}
\leq k-1-\frac{1}{|N_j|}
\leq k-1-|K|^{-1}.
\]
By $\lambda(G[K])\leq|K|-1=O_k(1)$, we deduce
\begin{align*}
\mathbf{v}^{\mathsf T}A(G[K])\mathbf{v}
-\mathbf{w}_i^{\mathsf T}A(G[K])\mathbf{w}_i&\leq\left|\mathbf{v}^{\mathsf T}A(G[K])\mathbf{v}
-\mathbf{w}_i^{\mathsf T}A(G[K])\mathbf{w}_i\right|\\
&=\left|(\mathbf{v}-\mathbf{w}_i)^{\mathsf T}A(G[K])(\mathbf{v}+\mathbf{w}_i)\right|\\
&\leq\|\mathbf{v}-\mathbf{w}_i\|_2\|A(G[K])\|_2\|\mathbf{v}+\mathbf{w}_i\|_2\\
&\leq\|\mathbf{v}-\mathbf{w}_i\|_2\|A(G[K])\|_2(\|\mathbf{v}\|_2+\|\mathbf{w}_i\|_2)\\
&=2\lambda(G[K])\|\mathbf{v}-\mathbf{w}_i\|_2\\
&=O_k(\lambda^{-1/2}).
\end{align*}
Since
\[
\mathbf{w}_i^{\mathsf T}A(G[K])\mathbf{w}_i=\frac{1}{|N_i|}\mathbf{\chi}_i^{\mathsf T}A(G[K])\mathbf{\chi}_i
 =\frac{1}{|N_i|}\sum_{x,y\in N_i}(A(G[K]))_{xy}
 =\frac{2e(G[N_i])}{|N_i|},
\]
it follows that
\(
\mathbf{v}^{\mathsf T}A(G[K])\mathbf{v}
\leq2(k-1-|K|^{-1})+O_k(\lambda^{-1/2}).
\)
Substituting it into equation \eqref{eq:kernel-scalar-identity}, and by
\(e(G[K]),\zeta\geq0\), we have
\(
\lambda^2
\leq m+2(k-1-|K|^{-1})\lambda
+O_k(\lambda^{1/2}).
\)
Note that $\mathfrak D_k(G)=m-\gamma+2(k-1)\lambda-\lambda^2\leq\delta_\ell$, this implies
\[
2|K|^{-1}\lambda
\leq O_k(\lambda^{1/2})+\delta_\ell+\gamma=O_k(\lambda^{1/2}),
\]
which is impossible for sufficiently large $m$.
Thus equality holds in inequality \eqref{eq:dominant-density}, as desired.
\end{proof}

\begin{proposition}\label{C2r}
{It} holds that \(|C|=2(k-1)\).
\end{proposition}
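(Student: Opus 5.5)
The plan is to show $C\subseteq W_j$, where $j$ and $W_j$ are the index and $(2k-2)$-set from Proposition~\ref{linear-neighborhood-type}. Then $|C|\le|W_j|=2(k-1)$, and together with the lower bound $|C|\ge2(k-1)$ from Lemma~\ref{exact-defect-identity} this gives the statement. The argument is purely combinatorial and uses $kK_3$-freeness against the large twin class $\mathcal I_j$.

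First recall what Proposition~\ref{linear-neighborhood-type} provides. There is a class $\mathcal I_j\subseteq\mathcal I$ with $n_j=|\mathcal I_j|=\Omega_k(m)$. Every $z\in\mathcal I_j$ satisfies $N_G(z)=N_j=\{u^\ast\}\sqcup W_j$, and $G[N_j]\cong K_{2k-1}$. In particular $W_j$ induces $K_{2k-2}$. Fix a perfect matching $\{w_iw_i'\colon 1\le i\le k-1\}$ of $G[W_j]$. For any $k-1$ distinct vertices $z_1,\dots,z_{k-1}\in\mathcal I_j$, the triples $w_iw_i'z_i$ form $k-1$ pairwise disjoint triangles. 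These triangles avoid $u^\ast$ and every vertex outside $W_j\cup\mathcal I_j$.

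Now suppose, for contradiction, that some $c\in C\setminus W_j$ exists. Since $c\in A$, we have $c\ne u^\ast$, and $c\notin W_j$ by assumption. By the definition of $C$, $d_A(c)>2(k-1)=|W_j|$. Hence $c$ has a neighbor $a\in A\setminus W_j$, and $a\ne c$ trivially. Since $a,c\in A=N_G(u^\ast)$, the triple $u^\ast ca$ is a triangle.

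Next I check that this triangle is disjoint from the $k-1$ triangles above. It avoids $W_j$ because $a,c\notin W_j$ and $u^\ast\notin W_j$. It also avoids the chosen $z_i$: the vertex $a$ cannot lie in $\mathcal I_j$, since $N_G(a)\ni c\notin N_j$. Likewise $c\notin\mathcal I_j$, since $c\in C$ has degree exceeding $2(k-1)$ inside $A$ while vertices of $\mathcal I_j$ have $d_A=2(k-1)$. Because $n_j=\Omega_k(m)\ge k-1$ for large $m$, we may choose $z_1,\dots,z_{k-1}\in\mathcal I_j$ freely. We then obtain $k$ pairwise vertex-disjoint triangles, contradicting that $G$ is $kK_3$-free.

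Hence $C\subseteq W_j$ and $|C|=2(k-1)$. The only step needing care is the disjointness bookkeeping: confirming that neither $a$ nor $c$ lies in $\mathcal I_j\cup W_j\cup\{u^\ast\}$. This follows directly from the twin structure $N_G(z)=N_j$, so I do not expect a genuine obstacle.
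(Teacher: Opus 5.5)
Your proposal is correct and is essentially the paper's argument: assuming some $c\in C\setminus W_j$, you take a neighbor $a\in N_A(c)\setminus W_j$ and combine the triangle $u^\ast ca$ with the $k-1$ triangles $z_iw_iw_i'$ built from a perfect matching of $G[W_j]$ and distinct twins $z_i\in\mathcal I_j$. The only cosmetic difference is that the paper picks the $z_i$ from $\mathcal I_j\setminus\{a\}$ instead of verifying $a\notin\mathcal I_j$; your neighborhood argument for $a\notin\mathcal I_j$ is also valid.
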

\begin{proof}
By Proposition~\ref{linear-neighborhood-type}, there is a \(j\in [s]\) with \(n_j=\Omega_k(m)\) satisfying \(N_j=\{u^\ast\}\sqcup W_j\), \(|W_j|=2(k-1)\) and
\(G[W_j]\cong K_{2(k-1)}\). If \(|C|>2(k-1)\), choose
\(c_0\in C\setminus W_j\). Since \(d_A(c_0)>2(k-1)=|W_j|\), there is a vertex
\(z_0\in N_A(c_0)\setminus W_j\).
Let \(\{a_1b_1,\ldots,a_{k-1}b_{k-1}\}\) be a perfect matching of \(G[W_j]\).
As \(n_j=\Omega_k(m)\), choose distinct
\(z_1,\ldots,z_{k-1}\in\mathcal I_j\setminus\{z_0\}\).
Then $\{z_ia_ib_iz_i:1\leq i\leq k-1\}$ and $u^\ast c_0 z_0u^\ast$ are \(k\) pairwise vertex-disjoint triangles of $G$, a contradiction. Thus \(|C|\leq2(k-1)\). By Lemma~\ref{exact-defect-identity}, we have $|C|=2(k-1)$.
\end{proof}
\section{The proof of Theorem~\ref{main-theorem}}\label{s5}
In this section, based on the foregoing results, we further characterize the structure of extremal graphs in detail, thus completing the proof of Theorem~\ref{main-theorem}. For convenience, we retain the notation and terminology introduced in earlier sections.

By Proposition~\ref{linear-neighborhood-type}, it follows that there is a $j\in[s]$ with
\(N_j=\{u^\ast\}\sqcup W_j\), $G[N_j]\cong K_{2k-1}$ and \(n_j=\Omega_k(m)\).
Since \(|W_j|=2(k-1)\) and \(W_j\subseteq C\), by Proposition~\ref{C2r}, we have \(W_j=C\). 

\begin{lemma}\label{bounded-multiplicity-of-nonprincipal-types}
$|{\mathcal I}\setminus \mathcal I_j|=O_k(1)$.
\end{lemma}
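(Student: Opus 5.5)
The plan is to classify the vertices of $\mathcal I\setminus\mathcal I_j$ by how their neighbourhood differs from $N_j=\{u^\ast\}\sqcup C$, and to bound each class separately. Every $z\in\mathcal I=L\setminus Q$ is adjacent to $u^\ast$, has no neighbour in $L$, and has $N_A(z)\subseteq C$. Hence $N(z)\neq N_j$ forces one of two things: (a) $z$ has a neighbour in $B$, or (b) $N(z)\subseteq N_j$ and $N_C(z)\subsetneq C$.

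First I will upgrade the approximation of Proposition~\ref{linear-neighborhood-type}. Since $n_j=\Omega_k(m)$, we have $\|\mathbf v-\mathbf w_j\|_2=O_k(\lambda^{-1/2})$, so all coordinates of $\mathbf x$ on $N_j$ agree up to a factor $1+O_k(\lambda^{-1/2})$. Because $x_{u^\ast}=1$, this gives $x_c=1-O_k(\lambda^{-1/2})$ for every $c\in C$, and hence $x_z=(2k-1+o(1))/\lambda$ for $z\in\mathcal I_j$.

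\emph{Class (b).} I will use switching together with a kK3-freeness check. Every $z$ with $N(z)\subseteq N_j$ behaves like a partial twin of $\mathcal I_j$. In any family of $k$ disjoint triangles, a triangle through $z$ uses two vertices of $N_j$, so $z$ can be exchanged for an unused vertex of $\mathcal I_j$, of which there are $\Omega_k(m)$. Consequently, adding an edge from $z$ to $N_j$ never creates $kK_3$.

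Suppose there are two class-(b) vertices, and let $z_1$ have the largest coordinate among them. If some other class-(b) vertex $z_2$ has a neighbour $c'\in C$, I choose $c\in C\setminus N(z_1)$ and look for a switch centred at a single vertex, so that Lemma~\ref{edge-shifting} applies. Deleting $cz_2$ if $c\sim z_2$, or otherwise deleting $c'z_2$ and adding $c'z_1$ if $c'\notin N(z_1)$, increases $\lambda$ since $x_{z_1}\ge x_{z_2}$. This contradicts maximality. If the only class-(b) vertices other than $z_1$ are pendant at $u^\ast$, I compare Rayleigh quotients for $G-u^\ast z_2+z_1c$. The change is proportional to $x_{z_1}x_c-x_{u^\ast}x_{z_2}$, where
\[
x_{z_1}x_c\ge (2-o(1))/\lambda \quad\text{and}\quad x_{u^\ast}x_{z_2}=1/\lambda .
\]
This is positive, again a contradiction (the new isolated vertex is discarded). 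Hence class (b) has at most $O_k(1)$ members; I expect the argument actually gives at most one, after handling the finitely many degenerate configurations. If this degenerates I will fall back on Lemma~\ref{weighted-neighborhood-class}(ii) together with $x_z\ge(1+|N_C(z)|(1-o(1)))/\lambda$.

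\emph{Class (a).} This is where I expect the main obstacle. Since $|B|=O_k(1)$ by Lemma~\ref{Bbound}, it suffices to show $d_{\mathcal I}(w)=O_k(1)$ for each $w\in B$. If $x_w\le 1/2$, then $\mathcal E_B\le D_q$ gives $d_A(w)\le 2D_q$ directly.

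If $x_w>1/2$, then $d_A(w)>\lambda/2$, and I argue by triangle packings. If $w$ is adjacent to some $c\in C$ and to some $z\in\mathcal I$ with $z\sim c$, form the triangle $wzc$. The remaining $2k-3$ vertices of $C$ together with $u^\ast$ induce a $K_{2k-2}$ with a perfect matching of size $k-1$. Combined with $k-1$ fresh vertices of $\mathcal I_j$, this yields $kK_3$, a contradiction. So the $\mathcal I$-neighbours of $w$ avoid the $C$-neighbours of $w$.

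I then intend to rule out the remaining case by the shift $G-wz+wu^\ast$ used in the proof that $G-u^\ast$ contains $(k-1)K_3$. We have $x_{u^\ast}\ge x_z$, and I must check that every new triangle through $wu^\ast$ can be rerouted through $\mathcal I_j$. Verifying this kK3-freeness in the case where $w$ has many neighbours in $\mathcal I$ but few in $C$ is the step I expect to be delicate. If it fails, I will instead exploit $X_w\le 2D_q$ from \eqref{eq:outer-weighted-bounds}, which forces $w$ to be adjacent to all of $C$ because $x_c\ge c_k$, and then apply the packing contradiction above.
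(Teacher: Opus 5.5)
\textbf{There is a genuine gap, mainly in class (a).} In the case $x_w>1/2$ the step you flag as delicate actually fails.

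\emph{The proposed shift is never admissible.} Here $d_A(w)\ge\lambda/2-|B|$ while $|C|=2(k-1)$, so $w$ has a neighbour $a\in L\setminus\{z\}$, and $a\notin\mathcal I_j$. In $G-wz+wu^\ast$, the triangle $wu^\ast a$ together with the $k-1$ triangles $c_{2i-1}c_{2i}y_i$ is a copy of $kK_3$. (Take a perfect matching of $G[C]\cong K_{2k-2}$ and fresh $y_i\in\mathcal I_j$.) So the shift is ruled out, not merely delicate.

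\emph{The fallback is unsound.} $X_w\le 2D_q$ only bounds the total Perron weight of $A\setminus N_A(w)$ by a constant. Since $x_c\approx 1$ and $2D_q=2\delta_\ell$ can exceed $1$ (for $k=2$ it tends to $4/3$), it does not force $w$ to be complete to $C$. Even if it did, your packing argument says nothing about neighbours $z\in\mathcal I$ of $w$ with $N_C(z)=\emptyset$.

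\emph{The missing observation: $x_w>1/2$ cannot occur.} Every $w\in B$ is nonadjacent to all of $\mathcal I_j$, since those vertices have neighbourhood exactly $N_j$. Moreover $\sum_{u\in\mathcal I_j}x_u\ge n_j/\lambda=\Omega_k(\lambda)$. With $X_w=\sum_{u\in A\setminus N_A(w)}x_u$ this gives $\lambda(1-x_w)=X_w-\sum_{v\in N_B(w)}x_v\ge\Omega_k(\lambda)-|B|$. Hence $1-x_w=\Omega_k(1)$ for every $w\in B$. Then $\mathcal E_B\le D_q$ gives $d_A(w)=O_k(1)$, and $|B|=O_k(1)$ bounds class (a) at once. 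You already had $X_w\le 2D_q$; comparing it with $X_w\ge\sum_{u\in\mathcal I_j}x_u$ is exactly the needed contradiction.

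\textbf{Class (b) also has a hole, though it is repairable.} Your single-centred shifts need some vertex of $N_C(z_2)\setminus N_C(z_1)$. So the case $\emptyset\ne N_C(z_2)\subseteq N_C(z_1)$ is untreated; an example is many vertices sharing the neighbourhood $\{u^\ast\}\cup S$ with $\emptyset\ne S\subsetneq C$. In the pendant case, the bound $x_{z_1}x_c\ge(2-o(1))/\lambda$ fails if $z_1$ is itself pendant. Your fallback via Lemma~\ref{weighted-neighborhood-class}(ii) only yields $O_k(\lambda)$ such vertices, because $x_z=\Theta(\lambda^{-1})$.

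A uniform repair works as follows. Take $G'=G-z_2s+z_1c$ with $s\in N(z_2)$ and $c\in C\setminus N(z_1)$; it is $kK_3$-free by your twin argument. Let $a_i,a_i'$ be the neighbour sums of $z_i$ before and after the move. Replace the test-vector entries at $z_1,z_2$ by $a_1'/\lambda$ and $a_2'/\lambda$. The Rayleigh quotient then exceeds $\lambda$ iff $a_1'^2+a_2'^2>a_1^2+a_2^2$. This holds because $a_1\ge a_2$ and $x_c,x_s=1-O_k(\lambda^{-1/2})$. Hence class (b) has at most one vertex.

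\textbf{Comparison with the paper.} The paper avoids any case split. From $\zeta=\sum_i n_i\bigl(|N_i|-(\chi_i^{\mathsf T}\mathbf v)^2\bigr)$, with each term for $i\ne j$ bounded below by $\eta_k/2$, it gets $\sum_{i\ne j}n_i=O_k(\lambda)$. It then projects \eqref{eq:reduced-eigenvalue} orthogonally to $\mathbf w$, which sharpens $\|\mathbf v-\mathbf w\|_2$ to $O_k(\lambda^{-1})$. That forces $\zeta=O_k(1)$, and hence $\sum_{i\ne j}n_i=O_k(1)$.
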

\begin{proof}
Denote by $\mathbf{w}:=\mathbf{w}_j,$ by Proposition~\ref{linear-neighborhood-type}, we have $\|\mathbf{v}-\mathbf{w}\|_2=O_k(\lambda^{-1/2})$. For \(i\in [s]\setminus \{j\}\), define $g_i(\mathbf{w})=|N_i|-(\chi_i^{\mathsf T}\mathbf{w})^2$.
By the Cauchy--Schwarz inequality,
\[
 (\mathbf{\chi}_i^{\mathsf T}\mathbf{w})^2
 \leq\|\mathbf{\chi}_i\|_2^2\|\mathbf{w}\|_2^2
 =|N_i|,
\]
and hence \(g_i(\mathbf{w})\geq0\). Equality holds only if
\(\mathbf{\chi}_i\) is a scalar multiple of \(\mathbf{\chi}_j\). Since \({\mathcal I}\subseteq A=N(u^\ast)\), \(u^\ast\in N_i\), and so \(N_i\neq\emptyset\). That is, $\mathbf{\chi}_i\neq\mathbf{0}$. Since \(\mathbf{\chi}_i\) and \(\mathbf{\chi}_j\) are nonzero \(0\)--\(1\) vectors, equality is possible only when $N_i=N_j,$ i.e.,
\(i=j\). It follows that $g_i(\mathbf{w})>0$ whenever $i\neq j$.
More explicitly,
\[
 \mathbf{\chi}_i^{\mathsf T}\mathbf{w}
 =\frac{|N_i\cap N_j|}{\sqrt {2k-1}},
 \qquad
 g_i(\mathbf{w})
 =\frac{(2k-1)|N_i|-|N_i\cap N_j|^2}{2k-1}.
\]
We may therefore fix a constant
\(\eta_k>0\), for instance \(\eta_k=1/(2k-1)\), such that $g_i(\mathbf{w})\geq\eta_k$ for every $i\neq j$.
Since \(|N_i|\leq|K|=O_k(1)\), we have
\[ |g_i(\mathbf{v})-g_i(\mathbf{w})|
 = \left|(\mathbf{\chi}_i^{\mathsf T}\mathbf{w})^2-(\mathbf{\chi}_i^{\mathsf T}\mathbf{v})^2\right|
 \leq
 \|\mathbf{\chi}_i\|_2^2\|\mathbf{w}-\mathbf{v}\|_2\|\mathbf{w}+\mathbf{v}\|_2
 \leq2|K|\,\|\mathbf{w}-\mathbf{v}\|_2
 =O_k(\lambda^{-1/2}).
 \]
Hence, $|N_i|-(\mathbf{\chi}_i^{\mathsf T}\mathbf{v})^2=g_i(\mathbf{v})\geq\frac{\eta_k}{2}$ for any $i\neq j$.
By \eqref{eq:4.010} and \eqref{eq:4.011},
$$
O_k(\lambda)=\zeta=\sum_{i=1}^s n_i\bigl(|N_i|-(\mathbf{\chi}_i^{\mathsf T}\mathbf{v})^2\bigr)\geq\sum_{i\neq j}n_ig_i(\mathbf{v})\geq\frac{\eta_k}{2}\left(\sum_{i\neq j}n_i\right).
$$
Thus $\sum_{i\neq j}n_i=O_k(\lambda)$.

Let $M'=\sum_{i\neq j}n_i\mathbf{\chi}_i\mathbf{\chi}_i^{\mathsf T}$.
Since for each \(i\), $\|\mathbf{\chi}_i\mathbf{\chi}_i^{\mathsf T}\|_2=\|\mathbf{\chi}_i\|_2^2=|N_i|$, one has
\[
 \begin{aligned}
 \|M'\|_2
 &\leq
 \sum_{i\neq j}
 n_i\|\mathbf{\chi}_i\mathbf{\chi}_i^{\mathsf T}\|_2
 =\sum_{i\neq j}n_i|N_i|
 \leq |K|\left(\sum_{i\neq j}n_i\right)
 =O_k(\lambda).
 \end{aligned}
\]
Since $M=n_j\mathbf{\chi}_j\mathbf{\chi}_j^{\mathsf T}+M'$, by equation \eqref{eq:reduced-eigenvalue}, we have
\begin{align}\label{vequation}
 \lambda^2\mathbf{v}
 =\lambda A(G[K])\mathbf{v}
 +n_j\mathbf{\chi}_j\mathbf{\chi}_j^{\mathsf T}\mathbf{v}
 +M'\mathbf{v}.
\end{align}

Since $\|\mathbf{w}\|_2=1$, we have $\|I_{|K|}-\mathbf{w}\mathbf{w}^{\mathsf T}\|_2=1$. By \(\mathbf{\chi}_j=\sqrt {2k-1}\mathbf{w}\), we have $(I_{|K|}-\mathbf{w}\mathbf{w}^{\mathsf T})\mathbf{\chi}_j\mathbf{\chi}_j^{\mathsf T}\mathbf{v}=0$.
Left-multiplying both sides by \(I_{|K|}-\mathbf{w}\mathbf{w}^{\mathsf T}\) for equation (\ref{vequation}) gives
\begin{equation}
 \lambda^2(I_{|K|}-\mathbf{w}\mathbf{w}^{\mathsf T})\mathbf{v}
 =\lambda (I_{|K|}-\mathbf{w}\mathbf{w}^{\mathsf T})A(G[K])\mathbf{v}
 +(I_{|K|}-\mathbf{w}\mathbf{w}^{\mathsf T})M'\mathbf{v}.
 \label{eq:original-7-12}
\end{equation}
Since $\|A(G[K])\|_2=\lambda(G[K])\leq|K|-1=O_k(1)$ and $\|\mathbf{v}\|_2=1$,
we deduce
\[
 \begin{aligned}
 \|\lambda (I_{|K|}-\mathbf{w}\mathbf{w}^{\mathsf T})A(G[K])\mathbf{v}\|_2
 &\leq\lambda\|I_{|K|}-\mathbf{w}\mathbf{w}^{\mathsf T}\|_2\|A(G[K])\|_2\|\mathbf{v}\|_2
 =O_k(\lambda),\\
 \|(I_{|K|}-\mathbf{w}\mathbf{w}^{\mathsf T})M'\mathbf{v}\|_2
 &\leq\|I_{|K|}-\mathbf{w}\mathbf{w}^{\mathsf T}\|_2\|M'\|_2\|\mathbf{v}\|_2
 =O_k(\lambda).
 \end{aligned}
\]
It follows from equation \eqref{eq:original-7-12} that $\|(I_{|K|}-\mathbf{w}\mathbf{w}^{\mathsf T})\mathbf{v}\|_2=O_k(\lambda^{-1})$.

Write $\mathbf{v}=\alpha \mathbf{w}+\mathbf{z},$ where $\alpha=\mathbf{w}^{\mathsf T}\mathbf{v}$. Then $\mathbf{z}=(I_{|K|}-\mathbf{w}\mathbf{w}^{\mathsf T})\mathbf{v},$ which is orthogonal to $\mathbf{w},$ and $\|\mathbf{v}-\mathbf{w}\|_2^2=\|\mathbf{v}\|_2^2+\|\mathbf{w}\|_2^2
-2\mathbf{w}^\mathsf{T}\mathbf{v}=2(1-\alpha)$.
Note that $\|\mathbf{v}-\mathbf{w}\|_2=O_k(\lambda^{-1/2})$. It follows that
\(
 \alpha
 =1-\frac12\|\mathbf{v}-\mathbf{w}\|_2^2
 =1-O_k(\lambda^{-1}),
\)
and in particular \(\alpha\to1\).  On the other hand, the unit norm of
\(\mathbf{v}\) gives
\[
 1=\|\mathbf{v}\|_2^2=\|\alpha \mathbf{w}+\mathbf{z}\|_2^2=(\alpha \mathbf{w}+\mathbf{z})^\mathsf{T}(\alpha \mathbf{w}+\mathbf{z})=\alpha^2+\|\mathbf{z}\|_2^2.
\]
Since \(\|\mathbf{z}\|_2=O_k(\lambda^{-1})\), we obtain
\[
 1-\alpha
 =\frac{1-\alpha^2}{1+\alpha}
 =\frac{\|\mathbf{z}\|_2^2}{1+\alpha}
 =O_k(\lambda^{-2}).
\]
Therefore
\[
 \begin{aligned}
 \|\mathbf{v}-\mathbf{w}\|_2^2
 &=\|(\alpha-1)\mathbf{w}+\mathbf{z}\|_2^2=(\alpha-1)^2+\|\mathbf{z}\|_2^2
 =O_k(\lambda^{-2}),
 \end{aligned}
\]
and hence $ \|\mathbf{v}-\mathbf{w}\|_2=O_k(\lambda^{-1})$.

Since \(A(G[K])\) is symmetric,
\[
 \mathbf{v}^{\mathsf T}A(G[K])\mathbf{v}-\mathbf{w}^{\mathsf T}A(G[K])\mathbf{w}
 =(\mathbf{v}-\mathbf{w})^{\mathsf T}A(G[K])(\mathbf{v}+\mathbf{w}).
\]
Thus,
\[
 \begin{aligned}
 \left|\mathbf{v}^{\mathsf T}A(G[K])\mathbf{v}-\mathbf{w}^{\mathsf T}A(G[K])\mathbf{w}\right|
 &\leq
 \|\mathbf{v}-\mathbf{w}\|_2\|A(G[K])\|_2\|\mathbf{v}+\mathbf{w}\|_2\\
 &\leq2\|A(G[K])\|_2\|\mathbf{v}-\mathbf{w}\|_2
 =O_k(\lambda^{-1}).
 \end{aligned}
\]
Since \(G[N_j]\cong K_{2k-1}\), we have
\[
 \begin{aligned}
 \mathbf{w}^{\mathsf T}A(G[K])\mathbf{w}
 &=\frac{1}{2k-1}\mathbf{\chi}_j^{\mathsf T}A(G[K])\mathbf{\chi}_j
 =\frac{2e(G[N_j])}{2k-1}
 =\frac{2\binom{2k-1}{2}}{2k-1}
 =2(k-1).
 \end{aligned}
\]
We conclude that
\begin{equation*}
 \mathbf{v}^{\mathsf T}A(G[K])\mathbf{v}
 =\mathbf{w}^{\mathsf T}A(G[K])\mathbf{w}+O_k(\lambda^{-1})
 =2(k-1)+O_k(\lambda^{-1}).
\end{equation*}
Substituting the above equation into equation \eqref{eq:kernel-scalar-identity}, we obtain
\[
 \begin{aligned}
 \lambda^2
 &=m-e(G[K])-\zeta
   +\lambda\bigl(2(k-1)+O_k(\lambda^{-1})\bigr)
 =m-e(G[K])-\zeta+2(k-1)\lambda+O_k(1).
 \end{aligned}
\]
Equivalently, $\zeta=m+2(k-1)\lambda-\lambda^2-e(G[K])+O_k(1)$.
By $\mathfrak D_k(G)=m-(2k-1)(k-1)+2(k-1)\lambda-\lambda^2\leq\delta_\ell$ and \(e(G[K])\geq0\), we have $\zeta= O_k(1)$. Recall that $\zeta\geq\frac{\eta_k}{2}\sum_{i\neq j}n_i$. It follows that $ \sum_{i\neq j}n_i=O_k(1)$, as required.
\end{proof}


Now, there is a partition $V(G)=N_j\sqcup \mathcal I_j\sqcup X$ satisfying $|X|=O_k(1)$. Note that $N(u)=N_j$ for any $u\in \mathcal I_j$, so \(E(\mathcal I_j,X)=\emptyset\). 
Let \(Y\) be the bipartite adjacency matrix between \(N_j\) and \(X\), 
and let \(\mathbf{\xi}=\mathbf{x}|_{N_j}/\|\mathbf{x}|_{N_j}\|_2\) and \(\mathbf{\sigma}=\mathbf{x}|_{X}/\|\mathbf{x}|_{N_j}\|_2\). 

Fix \(x\in X\), we have
\[
 \lambda\mathbf{\sigma}_x
 =\sum_{u\in N_{N_j}(x)}\mathbf{\xi}_u
  +\sum_{v\in N_{X}(x)}\mathbf{\sigma}_v
  =(Y^\mathsf{T}\mathbf{\xi})_x+(A(G[X])\mathbf{\sigma})_x.
\]
Hence $(\lambda I_{|X|}-A(G[X]))\mathbf{\sigma}=Y^\mathsf{T}\mathbf{\xi}$.
Since \(|X|=O_k(1)\),
\(\lambda(G[X])\leq|X|-1=O_k(1)\). As $m$ is sufficiently large, we have
\(\lambda>\lambda(G[X])\). Thus
\(\lambda I_{|X|}-A(G[X])\) is nonsingular. It follows that $\sigma=(\lambda I_{|X|}-A(G[X]))^{-1}Y^\mathsf{T}\mathbf{\xi}$. The coordinate of $\mathbf{x}/\|\mathbf{x}|_{N_j}\|_2$ corresponding to every vertex $v$ in \(\mathcal I_j\) is
$$
\frac{\sum_{u\in N_{N_j}(v)}\mathbf{\xi}_u}{\lambda}=\frac{\sum_{u\in N_j}\mathbf{\xi}_u}{\lambda}=\frac{\mathbf1^\mathsf{T}\mathbf{\xi}}{\lambda}.
$$
Therefore,
\(
 \lambda\mathbf{\xi}
 =(J_{2k-1}-I_{2k-1})\mathbf{\xi}+\frac{n_j}{\lambda}J_{2k-1}\mathbf{\xi}
 +Y\mathbf{\sigma}.
\)
Multiplying both sides by \(\lambda\), we obtain
\begin{align}\label{xiequation}
 \lambda^2\mathbf{\xi}
 =\lambda(J_{2k-1}-I_{2k-1})\mathbf{\xi}+n_jJ_{2k-1}\mathbf{\xi}
  +\lambda Y(\lambda I_{|X|}-A(G[X]))^{-1}Y^\mathsf{T}\mathbf{\xi}.
\end{align}

Note that $|X|=O_k(1)$ and \(Y\) is a \(0\)--\(1\) matrix, we obtain
\(
 \|Y\|_2\le \|Y\|_F
 \le \sqrt{(2k-1)|X|}=O_k(1).
\)
Moreover, since $\|A(G[X])\|_2=\lambda(A(G[X]))=O_k(1)$, and $m$ is sufficiently large, we have \(\lambda\ge 2\|A(G[X])\|_2\). By the Neumann-series estimate, one gets
 \begin{align*}
 \|(\lambda I_{|X|}-A(G[X]))^{-1}\|_2&=\frac{1}{\lambda}\|(I_{|X|}-\frac{A(G[X])}{\lambda})^{-1}\|_2\\
 &=\frac{1}{\lambda}\|\sum_{j\geq0}(\frac{A(G[X])}{\lambda})^j\|_2\\
 &\leq\frac{1}{\lambda}\sum_{j\geq0}\|(\frac{A(G[X])}{\lambda})^j\|_2\\
 &\leq\frac{1}{\lambda}\sum_{j\geq0}\frac{\|A(G[X])\|_2^j}{\lambda^j}\\
 &\leq\frac{1}{\lambda}\frac{1}{1-\|A(G[X])\|_2/\lambda}\\
 &=\frac{1}{\lambda-\|A(G[X])\|_2}
 \le \frac{2}{\lambda}.
 \end{align*}
Then
\begin{align}\label{Yequation}
 \bigl\|\lambda Y(\lambda I_{|X|}-A(G[X]))^{-1}Y^\mathsf{T}\bigr\|_2
 &\le \lambda\|Y\|_2^2
       \|(\lambda I_{|X|}-A(G[X]))^{-1}\|_2=O_k(1).
 \end{align}

Write $\xi=\frac{\alpha_0}{\sqrt{2k-1}} \mathbf 1_{2k-1}+\mathbf{z}_0$, where $\alpha_0=\frac{\mathbf 1_{2k-1}^{\mathsf T}\mathbf{\xi}}{\sqrt{2k-1}}$. Then $\mathbf{z}_0=(I_{2k-1}-\frac{J_{2k-1}}{2k-1})\xi,$ which is orthogonal to $\mathbf 1_{2k-1}$.
Since \(\|\xi\|_2=1\), we have
\(\alpha_0^2+\|\mathbf{z}_0\|_2^2=1\). As both $\xi$ and $\mathbf 1_{2k-1}$ are positive vectors, we have \(\alpha_0>0\), and hence
\[
 \alpha_0=\sqrt{1-\|\mathbf{z}_0\|_2^2}=1-\frac{\|\mathbf{z}_0\|_2^2}{2}
 +O(\|\mathbf{z}_0\|_2^4).
\]
On the other hand, Left-multiplying both sides by $I_{2k-1}-\frac{J_{2k-1}}{2k-1}$ for equation (\ref{xiequation}), we deduce
\((\lambda^2+\lambda)\mathbf{z}_0
=(I_{2k-1}-\frac{J_{2k-1}}{2k-1})\lambda Y(\lambda I_{|X|}-A(G[X]))^{-1}Y^\mathsf{T}\mathbf{\xi}.\) By inequality \eqref{Yequation} and \(\|I_{2k-1}-\frac{J_{2k-1}}{2k-1}\|_2=\|\xi\|_2=1\), one gets
\( \|\mathbf{z}_0\|_2 \le \frac{O_k(1)}{\lambda^2+\lambda} =O_k(\lambda^{-2}) =O_k(m^{-1}).
\) Then $\alpha_0=1-\frac{\|\mathbf{z}_0\|_2^2}{2}
 +O(\|\mathbf{z}_0\|_2^4)=1+O_k(m^{-2}).$
Consequently,
\begin{align}\notag
\|\xi-\frac{\mathbf 1_{2k-1}}{\sqrt{2k-1}} \|_2&=\|\frac{\alpha_0}{\sqrt{2k-1}} \mathbf 1_{2k-1}+\mathbf{z}_0-\frac{\mathbf 1_{2k-1}}{\sqrt{2k-1}}\|_2=\|\frac{\alpha_0-1}{\sqrt{2k-1}}\mathbf 1_{2k-1}+\mathbf{z}_0\|_2\\ \label{eq:5.01}
&\leq|\alpha_0-1|\|\frac{\mathbf 1_{2k-1}}{\sqrt{2k-1}}\|_2+\|\mathbf{z}_0\|_2=O_k(m^{-1}).
\end{align}

Since
\(J_{2k-1}\mathbf 1_{2k-1}=(2k-1)\mathbf 1_{2k-1}\) and \(J_{2k-1}\mathbf{z}_0=\mathbf{0}\), one has
 \begin{align}\notag
 \xi^\mathsf{T}(J_{2k-1}-I_{2k-1})\xi&=\xi^\mathsf{T}J_{2k-1}\xi
 -\xi^\mathsf{T}I_{2k-1}\xi=(2k-1)\alpha_0^2-1
 =(2k-1)(1-\|\mathbf{z}_0\|_2^2)-1\\ \label{eq:5.02}
 &=(2k-2)-(2k-1)\|\mathbf{z}_0\|_2^2
 =2k-2+O_k(m^{-2}),
 \end{align}

Similarly, using \(n_j=O_k(m)\), we obtain
 \begin{align}\label{eq:5.03}
 n_j\xi^\mathsf{T} J_{2k-1}\xi
 &=(2k-1)n_j\alpha_0^2
 =(2k-1)n_j-(2k-1)n_j\|\mathbf{z}_0\|_2^2
 =(2k-1)n_j+O_k(m^{-1}).
 \end{align}
For each \(x\in X\), we obtain
\[
 (Y^\mathsf{T} \mathbf 1_{2k-1})_x
 ={|N_{N_j}(x)|}.
\]
Since $|N_{N_j}(x)|\leq |N_j|=2k-1$, we have \(\|Y^\mathsf{T} \mathbf 1_{2k-1}\|_2=O_k(1)\).

Furthermore, by \eqref{eq:5.01},
\[
 \|Y^\mathsf{T}(\xi-\frac{\mathbf 1_{2k-1}}{\sqrt{2k-1}})\|_2
 \le \|Y\|_2\|\xi-\frac{\mathbf 1_{2k-1}}{\sqrt{2k-1}}\|_2
 =O_k(m^{-1}).
\]
It follows that
\[
 \begin{aligned}
 \xi^\mathsf{T} YY^\mathsf{T}\xi
 &=\|Y^\mathsf{T}\xi\|_2^2=\|Y^\mathsf{T} \frac{\mathbf 1_{2k-1}}{\sqrt{2k-1}}+Y^\mathsf{T} (\xi-\frac{\mathbf 1_{2k-1}}{\sqrt{2k-1}})\|_2^2\\
 &=\|Y^\mathsf{T} \frac{\mathbf 1_{2k-1}}{\sqrt{2k-1}}\|_2^2+O_k(m^{-1})
 =\frac{1}{2k-1}\sum_{x\in X}|N_{N_j}(x)|^2+O_k(m^{-1}).
 \end{aligned}
\]
Left-multiplying both sides by $\xi^\mathsf{T}$ for equation \eqref{xiequation} and using
\(\|\xi\|_2=1\), we obtain
 \begin{align}\label{eq:5.04}
 \lambda^2
 ={}&\lambda\xi^\mathsf{T}(J_{2k-1}-I_{2k-1})\xi+n_j\xi^\mathsf{T} J_{2k-1}\xi+\xi^\mathsf{T}\lambda Y(\lambda I_{|X|}-A(G[X]))^{-1}Y^\mathsf{T}\xi.
 \end{align}
By $\|\xi\|_2=1$, \(\|A(G[X])\|_2=O_k(1)\) and $\|Y\|_2=O_k(1)$, we have
$$
|\xi^\mathsf{T} YA(G[X])Y^\mathsf{T}\xi|
 \le \|Y\|_2^2\|A(G[X])\|_2\|\xi\|_2^2
 =O_k(1).
$$
Hence, by the Neumann-series estimate,
 \begin{align}\notag
 \xi^\mathsf{T}\lambda Y(\lambda I_{|X|}-A(G[X]))^{-1}Y^\mathsf{T}\xi
 &=\xi^\mathsf{T}Y\left(I_{|X|}-\frac{A(G[X])}{\lambda}\right)^{-1}
 Y^\mathsf{T}\xi\\ \notag
 &=\xi^\mathsf{T} Y
 \left(I_{|X|}+\frac{A(G[X])}{\lambda}+
 \sum_{i\geq2}\left(\frac{A(G[X])}{\lambda}\right)^i\right)
 Y^\mathsf{T}\xi\\ \notag
 &=\xi^\mathsf{T} YY^\mathsf{T}\xi
 +\lambda^{-1}\xi^\mathsf{T} YA(G[X])Y^\mathsf{T}\xi
 +O_k(\lambda^{-2})\\ \label{eq:5.05}
 &=\frac{1}{2k-1}\sum_{x\in X}|N_{N_j}(x)|^2+O_k(\lambda^{-1}).
 \end{align}
Together with \eqref{eq:5.02}-\eqref{eq:5.05}, one gets
\begin{align}\label{eq:original-8-8}
 \lambda^2
 =(2k-2)\lambda+(2k-1)n_j
 +\frac{1}{2k-1}\sum_{x\in X}|N_{N_j}(x)|^2
 +O_k(\lambda^{-1}).
\end{align}

Recall that the edges of \(G\) has been split into four disjoint classes: the
\(\binom {2k-1}2\) edges inside \(N_j\), the \((2k-1)n_j\) edges between \(N_j\)
and \(\mathcal I_j\), the \(\sum_{x\in X}|N_{N_j}(x)|\) edges between \(N_j\) and \(X\), and
the \(e(G[X])\) edges inside \(X\). 
Hence
\begin{align}\label{eq:original-8-9}
 m=\binom {2k-1}2+(2k-1)n_j+\sum_{x\in X}|N_{N_j}(x)|+e(G[X]).
\end{align}

Recall that $\mathfrak D_k(G)=m-\binom {2k-1}2+2(k-1)\lambda-\lambda^2$. Substituting equation \eqref{eq:original-8-8} and equation \eqref{eq:original-8-9} into this definition, we derive
\begin{align}\label{equationofDkG}
 \mathfrak D_k(G)
 &=m-\binom {2k-1}2-(2k-1)n_j-\frac1{2k-1}\sum_{x\in X}|N_{N_j}(x)|^2
   +O_k(\lambda^{-1})\nonumber\\
 &=e(G[X])+\sum_{x\in X}|N_{N_j}(x)|
   -\frac1{2k-1}\sum_{x\in X}|N_{N_j}(x)|^2
   +O_k(\lambda^{-1})\nonumber\\
 &=e(G[X])+\frac1{2k-1}\sum_{x\in X}|N_{N_j}(x)|(2k-1-|N_{N_j}(x)|)
   +O_k(m^{-1/2}).
\end{align}

Define $h(x)=x(2k-1-x),$ where $0\le x\le 2k-1$.
If \(0\leq x+y\le 2k-1\), then
$
h(x)+h(y)-h(x+y)=2xy\geq0;
$
if \(x+y>2k-1\), then
$
h(x)+h(y)-h(x+y-2k+1)
=2(2k-1-x)(2k-1-y)\geq0.
$

For $0\leq x,y\leq 2k-1$, define
\begin{align*}
x\oplus y=
\begin{cases}
x+y, &\text{if } 0\leq x+y\leq 2k-1,\\
x+y-2k+1, &\text{if } x+y>2k-1.
\end{cases}
\end{align*}
Then $x\oplus y\equiv x+y\pmod{2k-1}$ and $x\oplus y\in\{0,1,\ldots,2k-1\}$. Hence $h(x)+h(y)\geq h(x\oplus y)$. In particular, for all $x,y\in\{1,2,\ldots,2k-2\}$, we have $h(x)+h(y)>h(x\oplus y)$. We call a number \(x\) nontrivial if
\(1\le x\le 2k-2\).

For a finite sequence $d_1,\ldots,d_s\in\{0,1,\ldots,2k-1\}$, if there are at least two nontrivial entries $d_i,d_t$, then a merging operation of this sequence is an operation that replace $d_i$ and $d_t$ by $d_i\oplus d_t$ in this sequence. After this operation, we have
$$
\sum_{r=1}^sd_r\equiv\sum_{r\neq i,t}d_r+(d_i\oplus d_t)\pmod{2k-1},
$$
and the number of nontrivial entries decreases by at least one. Furthermore,
$$
\sum_{r=1}^sh(d_r)>\sum_{r\neq i,t}h(d_r)+h(d_i\oplus d_t).
$$

Let $b\in \{0,\ldots,2k-2\}$ be the residue of $\sum_{r=1}^sd_r$ modulo $2k-1$. Repeatedly merge two nontrivial entries whenever possible. Each merge preserves the residue modulo $2k-1$, decreases the number of nontrivial entries, and strictly decreases the sum of $h$ over the entries. The operation therefore may terminate. If $b=0$, the terminal sequence has no nontrivial entry; otherwise, it has exactly one nontrivial entry, equal to $b$. Since $h(0)=h(2k-1)=0$, the terminal sum is $h(b)$. Consequently, $\sum_{r=1}^sh(d_r)\geq h(b)$, with strict inequality if and only if the original sequence contains at least two nontrivial entries.
\begin{lemma}\label{unique-minimizer}
Let \(e\) and \(d_1,\ldots,d_s\) be integers satisfying
\(e\ge0,\ 0\le d_i\le 2k-1\) for each $1\leq i\leq s$, and \(
e+\sum_i d_i\equiv\ell\pmod {2k-1}
\)
for some \(1\le\ell\le 2k-2\). Then
\(
e+\frac1{2k-1}\sum_i d_i(2k-1-d_i)
\ge\frac{\ell(2k-1-\ell)}{2k-1}.
\)
Equality holds if and only if \(e=0\) and there is exactly one nontrivial entry $d_i=\ell$ for some $1\leq i\leq s$ and $d_t\in\{0,2k-1\}$ for $t\in[s]\setminus\{i\}$. Otherwise, $e+\frac1{2k-1}\sum_i d_i(2k-1-d_i)
\ge\frac{\ell(2k-1-\ell)}{2k-1}+\frac{1}{2k-1}$.
\end{lemma}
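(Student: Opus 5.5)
Multiplying through by $2k-1$, the inequality becomes $(2k-1)e+\sum_i h(d_i)\ge h(\ell)$, where $h(x)=x(2k-1-x)$ as above. The plan is to reduce to the merging argument established just before the statement. The one new point is how to treat $e$. Each unit of $e$ contributes $2k-1$ on the left. Moreover, $h(1)=2k-2<2k-1$, and $h(x)+h(1)\ge h(x\oplus 1)$ with the deficit controlled. So I would encode $e$ as $e$ extra entries equal to $1$ in the sequence. This preserves the residue, since $e+\sum d_i\equiv \ell$. It also costs at most $(2k-2)e\le (2k-1)e$ on the left, with strict loss $e$ whenever $e>0$.

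Concretely, set $d'_r$ to be the sequence $d_1,\dots,d_s$ followed by $e$ copies of $1$. Its sum is $\equiv\ell\pmod{2k-1}$. By the terminal-merge conclusion proved above, $\sum_r h(d'_r)\ge h(\ell)$, with strict inequality iff $d'$ has at least two nontrivial entries. Hence
\[
(2k-1)e+\sum_i h(d_i)=e+\sum_r h(d'_r)\ge e+h(\ell)\ge h(\ell).
\]
This gives the main inequality.

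For the equality case and the gap, I will use integrality. The left side $(2k-1)e+\sum_i h(d_i)$ is an integer, as is $h(\ell)$, so any strict inequality improves by at least $1$. Dividing by $2k-1$ then yields the stated $\frac{1}{2k-1}$ gap. It remains to characterize equality. If $e\ge1$, the chain has the strict slack $e\ge1$, so equality fails. If $e=0$, then $d'=d$, and equality requires $d$ to have at most one nontrivial entry. Since $\ell\neq0$, the residue forces exactly one nontrivial entry, and because the other entries lie in $\{0,2k-1\}$ and contribute $0\pmod{2k-1}$, that entry is congruent to $\ell$. As it lies in $\{1,\dots,2k-2\}$, it equals $\ell$. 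Conversely, such a configuration gives exactly $h(\ell)$, since $h(0)=h(2k-1)=0$.

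The only delicate step is making sure the terminal-merge claim is applied correctly, namely that the terminal value is exactly $h(b)$ with $b=\ell$, and that "strict iff at least two nontrivial entries" holds. This follows from the observation already recorded that each merge of two nontrivial entries strictly decreases $\sum h$. I will cite that directly; the rest is bookkeeping.
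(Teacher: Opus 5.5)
Your proof is correct, but it handles the case $e>0$ differently from the paper.

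\textbf{The paper's route.} For $e=0$ the paper uses the merging argument, exactly as you do, and your equality analysis matches its Case 1. For $e\ge 1$ it splits into subcases:
\begin{itemize}
\item If $e\ge 2k-1$, it uses the crude bound $h(\ell)/(2k-1)\le (2k-1)/4$.
\item If $1\le e<2k-1$, it merges only the $d_i$, down to the residue $b\equiv \ell-e$. It then checks by explicit algebra that $e+h(b)/(2k-1)>h(\ell)/(2k-1)$. This is done separately for $e\le\ell$, where the excess is $e(2\ell-e)/(2k-1)$, and for $e>\ell$, where the excess is $\bigl(q(4k-2-q)+\ell^2\bigr)/(2k-1)$ with $q=e-\ell$.
\end{itemize}

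\textbf{Your route.} You absorb $e$ into the sequence as $e$ extra entries equal to $1$. These are admissible in the merging lemma and preserve the residue. The key is the identity $2k-1=h(1)+1$, which gives $(2k-1)e+\sum_i h(d_i)=e+\sum_r h(d'_r)$. A single application of the merging bound then covers every $e$ at once, with explicit slack at least $e$.

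\textbf{Comparison.} Your version removes all case analysis. It also makes strictness for $e\ge 1$ immediate, without even invoking integrality there. The paper's computation gives a larger excess when $0<e<2k-1$. However, only the $\frac{1}{2k-1}$ gap is used later, in the proof of Theorem~\ref{main-theorem}, so your shorter argument is fully sufficient.
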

\begin{proof}
For convenience, write
\[
 \Phi(e;d_1,\ldots,d_s)
 :=e+\frac1{2k-1}\sum_{i=1}^s h(d_i).
\]
Note that $h(0)=h(2k-1)=0$ and $0\equiv 2k-1\pmod {2k-1}$. Thus entries equal to \(0\) or \(2k-1\) affect neither $\Phi(e;d_1,\ldots,d_s)$ nor the congruence condition $e+\sum_i d_i\equiv\ell\pmod {2k-1}$.

\smallskip
\noindent\emph{Case 1: \(e=0\).} That is, $\sum_{i=1}^s d_i\equiv\ell\pmod {2k-1}$. Note that \(1\le\ell\le 2k-2\). Hence, after a sequence of merging operations of \(d_1,\ldots,d_s\), the terminal sequence must contain exactly
one nontrivial entry \(\ell\), and $\sum_{i=1}^s h(d_i)\ge h(\ell)$.
Consequently,
\[
 \Phi(0;d_1,\ldots,d_s)
 \ge \frac{h(\ell)}{2k-1}
 =\frac{\ell(2k-1-\ell)}{2k-1}.
\]
If the sequence \(d_1,\ldots,d_s\) contains at least two nontrivial entries,
then $\sum_{i=1}^s h(d_i)>h(\ell)$. Hence equality holds only if the sequence \(d_1,\ldots,d_s\) contains exactly one nontrivial entry, equal to \(\ell\). Conversely, such a sequence attains equality.

\smallskip
\noindent\emph{Case 2: \(e>0\).} In this case, we show that the desired inequality is then always strict. Since
\[
 \max_{0\le x\le 2k-1}\frac{h(x)}{2k-1}
 =\max_{0\le x\le 2k-1}\frac{x(2k-1-x)}{2k-1}
 \le \frac {2k-1}4,
\]
if \(e\ge 2k-1\), then
\[
 \Phi(e;d_1,\ldots,d_s)
 \ge e\ge 2k-1>\frac {2k-1}4\ge\frac{h(\ell)}{2k-1}.
\]
It remains to consider \(1\le e<2k-1\).

Let \(b\equiv\sum_{i=1}^sd_i\pmod{2k-1}\in\{0,\ldots,2k-2\}\). Using a sequence of merging operations for the sequence \(d_1,\ldots,d_s\), we have
\(
 \sum_{i=1}^s h(d_i)\ge h(b)
\)
and, by the congruence assumption, we derive
\(
 b\equiv\sum_{i=1}^s d_i
 \equiv\ell-e\pmod {2k-1}.
\)
It follows that
\(
 \Phi(e;d_1,\ldots,d_s)
 \ge e+\frac{h(b)}{2k-1}.
\)

Suppose first that \(1\leq e\le\ell\). Then $2\ell-e\ge\ell>0$. In this case,
\[
 \begin{aligned}
 e+\frac{h(\ell-e)}{2k-1}-\frac{h(\ell)}{2k-1}
 &=
 e+\frac{(\ell-e)(2k-1-\ell+e)-\ell(2k-1-\ell)}{2k-1}
 =\frac{e(2\ell-e)}{2k-1}>0.
 \end{aligned}
\]

Now suppose that \(e>\ell\), and put \(q=e-\ell\). Since \(e<2k-1\), we
have \(1\le q<2k-1\), and the least nonnegative residue of
\(\ell-e=-q\) modulo \(2k-1\) is \(b=2k-1-q\). Therefore
\[
 \begin{aligned}
 e+\frac{h(2k-1-q)}{2k-1}-\frac{h(\ell)}{2k-1}
 &=
 q+\ell+\frac{q(2k-1-q)-\ell(2k-1-\ell)}{2k-1}
 =\frac{q(4k-2-q)+\ell^2}{2k-1}>0.
 \end{aligned}
\]
Therefore, \(\Phi(e;d_1,\ldots,d_s)>\frac{h(\ell)}{2k-1}\) whenever
\(e\geq1\).

If $e+\frac1{2k-1}\sum_i d_i(2k-1-d_i)
>\frac{\ell(2k-1-\ell)}{2k-1}$, that is, $ \frac{(2k-1)e+\sum_{i=1}^s h(d_i)-h(\ell)}{2k-1}>0$, then since $(2k-1)e+\sum_{i=1}^s h(d_i)-h(\ell)$ is a positive integer, we have
$$
\frac{(2k-1)e+\sum_{i=1}^s h(d_i)-h(\ell)}{2k-1}\geq\frac{1}{2k-1},
$$
as desired.
\end{proof}


Now we are ready to prove Theorem~\ref{main-theorem}.
\begin{proof}[\bf Proof of Theorem~\ref{main-theorem}]
Let $G$ be the graph having maximum spectral radius among all the $kK_3$-free graphs with $m$ edges and no isolated vertex. Then by the discussion in Section~4 and Lemma~\ref{bounded-multiplicity-of-nonprincipal-types}, $G$ has a vertex partition $V(G)=N_j\sqcup \mathcal I_j\sqcup X$ such that $G[N_j]\cong K_{2k-1}$, $\mathcal I_j$ is independent, $N(u)=N_j$ for $u\in \mathcal I_j$, $|X|=O_k(1)$ and $E(\mathcal I_j,X)=\emptyset$. By equation \eqref{eq:original-8-9} and \(m-\binom {2k-1}2\equiv\ell\pmod {2k-1}\), we have
\[
e(G[X])+\sum_{x\in X}|N_{N_j}(x)|\equiv\ell\pmod {2k-1}.
\]
Applying Lemma \ref{unique-minimizer} with \(e=e(G[X])\) and the sequence \(|N_{N_j}(x)|,\,x\in X\), we have $e(G[X])+\frac1{2k-1}\sum_{x\in X}|N_{N_j}(x)|(2k-1-|N_{N_j}(x)|)\ge D_\ell$. If equality does not hold, then
\[
e(G[X])+\frac1{2k-1}\sum_{x\in X}|N_{N_j}(x)|(2k-1-|N_{N_j}(x)|)
\ge D_\ell+\frac1{2k-1}.
\]
Equation \eqref{equationofDkG} and the above inequality imply
\begin{align}\label{eq:original-10-4}
\mathfrak D_k(G)
\ge D_\ell+\frac1{2k-1}+O_k(m^{-1/2}).
\end{align}
On the other hand, by \eqref{boundofdelta} and \eqref{bound DkG}, we have \(\mathfrak D_k(G)\leq\delta_\ell<D_\ell\). This is a contradiction. Hence equality must
hold in Lemma \ref{unique-minimizer}. We conclude that \(e(G[X])=0\) and there is exactly one vertex $x\in X$ with \(|N_{N_j}(x)|=\ell\) and $|N_{N_j}(y)|\in\{0,2k-1\}$ for every $y\in X\setminus \{x\}$.

If $|N_{N_j}(y)|=0$ for some $y\in X\setminus \{x\}$, then $y$ is isolated because \(G[X]\)
has no edges and \(E(\mathcal I_j,X)=\emptyset\), which contradicts that $G$ has no isolated vertex. Thus $|N_{N_j}(y)|=2k-1$ for all $y\in X\setminus \{x\}$, then $N_G(y)=N_j$. Thus \(G\) consists of the clique \(G[N_j]\cong K_{2k-1}\), an independent set satisfying that the neighborhood of every vertex in this set is \(N_j\), and one vertex adjacent to precisely \(\ell\) vertices of \(N_j\). 
Thus, $G\cong T_{k,m}^{(\ell)}$. Therefore, for an arbitrary $kK_3$-free graph $G$ with $m$ edges attaining the maximum spectral radius, we have $G\cong T_{k,m}^{(\ell)}\cup sK_1$ for some $s\geq0$.
\end{proof}

Combining Theorem \ref{main-theorem} with the divisible case of Das and Yamini gives a
single statement for every sufficiently large size.

\begin{corollary}
Fix \(k\ge2\), let \(m\) be sufficiently large.
\begin{enumerate}[label=\textup{(\roman*)}]
\item
  If \(m\equiv0\pmod {2k-1}\), then
\(
\operatorname{spex}(m,kK_3)
=(k-1)+\sqrt{m-k(k-1)},
\)
and the unique extremal graph up to isolated vertices is
\(
K_{2k-1}\vee\left(\frac m{2k-1}-(k-1)\right)K_1.
\)
\item
  If \(m\equiv\ell\pmod {2k-1}\) with \(1\le\ell\le 2k-2\). Then
\(
\operatorname{spex}(m,kK_3)
=\lambda\bigl(T_{k,m}^{(\ell)}\bigr),
\)
the largest real zero of the quartic \eqref{polynomialofextremalgraph}, and the unique extremal
graph up to isolated vertices is \(T_{k,m}^{(\ell)}\).
\end{enumerate}
\end{corollary}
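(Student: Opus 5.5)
The corollary is a direct combination of two results already available, so the plan is to split by the residue of $m$ modulo $2k-1$ and read off each case.

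\emph{Case (i): $m\equiv 0\pmod{2k-1}$.} Here Theorem~\ref{Das} applies verbatim for $m\ge M_0(k)$. It gives $\lambda(G)\le (k-1)+\sqrt{m-k(k-1)}$ for every $kK_3$-free $G$ of size $m$, with equality exactly for $\bigl(K_{2k-1}\vee qK_1\bigr)\cup tK_1$, where $q=\frac{m}{2k-1}-(k-1)$. Since $(2k-1)\mid m$, this $q$ is a nonnegative integer for large $m$, so the graph exists and the bound is attained. Hence $\operatorname{spex}(m,kK_3)$ equals this bound, and the extremal graph is unique up to isolated vertices.

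\emph{Case (ii): $m\equiv \ell\pmod{2k-1}$ with $1\le\ell\le 2k-2$.} I will invoke Theorem~\ref{main-theorem} for $m\ge M(k)$. Section~\ref{s3} already records that $T_{k,m}^{(\ell)}$ is $kK_3$-free with exactly $m$ edges. Hence
\[
\operatorname{spex}(m,kK_3)\ge\lambda\bigl(T_{k,m}^{(\ell)}\bigr),
\]
and Theorem~\ref{main-theorem} gives the reverse inequality together with the characterization $G\cong T_{k,m}^{(\ell)}\cup sK_1$. To identify this value as the largest real zero of $p(x)$ in \eqref{polynomialofextremalgraph}, I use the equitable partition $S\sqcup(R\setminus S)\sqcup U\sqcup\{z\}$. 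The spectral radius of a connected graph equals the largest eigenvalue of an equitable quotient matrix, here $Q_\ell$, whose characteristic polynomial is $p(x)$. This is stated just before Lemma~\ref{candidate-graph}.

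Finally, set $M^\ast(k)=\max\{M_0(k),M(k)\}$, so that both cases hold simultaneously for all $m\ge M^\ast(k)$. No step is a real obstacle. The only points needing care are that the thresholds depend only on $k$, so the maximum is finite, and that the residue classes $\ell=0$ and $1\le\ell\le 2k-2$ exhaust all $m$.
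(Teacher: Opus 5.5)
Your proposal is correct and takes the same approach as the paper. The paper's proof is the one-line remark that the corollary follows by combining Theorem~\ref{Das} (divisible case) with Theorem~\ref{main-theorem} (nonzero residues), with $\lambda\bigl(T_{k,m}^{(\ell)}\bigr)$ identified as the largest root of $p(x)$ via the equitable quotient matrix $Q_\ell$ in Section~\ref{s3}. Your added checks (that the extremal graphs are $kK_3$-free of size $m$, and taking the maximum of the two thresholds) just make that combination explicit.
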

\section*{Statements and Declarations}

\noindent\textbf{Funding}
Shuchao Li was supported by the National Natural Science Foundation of China (Grant Nos. 12571365, 121971190). Yuantian Yu was supported by the Early-Career Talent Training Fund of Jiangxi Province (Grant No. 20262BEJ730309) and the Doctoral Research Startup Fund of East China University of Technology (Grant No. DHBK2026038).

\noindent\textbf{Competing interests}
The authors declare that they have no competing interests.

\noindent\textbf{Data availability}
No data were generated or analysed during the current study.


\noindent\textbf{Use of AI tools}
During the preparation of this manuscript, the authors used Doubao for language polishing and structural organization. The authors reviewed and edited all content and take full responsibility for the final manuscript.

\end{document}